\documentclass[11pt]{article}
\usepackage{afterpage}

\usepackage[utf8]{inputenc}
\usepackage{amsfonts,amssymb,amsmath,amsthm,latexsym,epsfig,amscd,bbm,stmaryrd,mathrsfs}
\usepackage[english]{babel}
\usepackage{graphicx}
\usepackage{color}
\usepackage{tikz}
\usepackage{courier}
\usepackage{bbm}
\usepackage{enumerate}
\usepackage{psfrag}
\usepackage{wasysym}
\usepackage{type1cm}
\usepackage{dsfont}
\usepackage{geometry}
\usepackage{marginnote}
%%%

\topmargin -0.5in
\textheight 9in
\oddsidemargin 0.15in
\evensidemargin 0.25in \textwidth 6.15in
\parskip=3pt plus 1pt minus 1pt

%%%

\numberwithin{equation}{section}

\theoremstyle{plain} 

\newtheorem{theorem}{Theorem}[section]
\newtheorem{corollary}[theorem]{Corollary}
\newtheorem{lemma}[theorem]{Lemma}
\newtheorem{proposition}[theorem]{Proposition}
\newtheorem{definition}[theorem]{Definition}
\newtheorem{assumption}[theorem]{Assumption}

%%%

\newcommand{\ssup}[1] {{{\scriptscriptstyle{({#1}})}}} 
\newcommand{\EE}{\mathbb{E}}

\newcommand{\N}{\mathbb{N}}
\newcommand{\Z}{\mathbb{Z}}
\newcommand{\R}{\mathbb{R}}

\newcommand{\cC}{\mathcal{C}}
\newcommand{\cB}{\mathcal{B}}

\newcommand{\cO}{\mathcal{O}} 
\newcommand{\cT}{\mathcal{T}} 
\newcommand{\cZ}{\mathcal{Z}} 
\newcommand{\Prob}{\mathrm{P}}
\newcommand{\Expec}{\mathrm{E}}

\newcommand{\eee}{\mathrm{e}}

\def \cL {{\mathcal L}}

%%%%%%%%%% ABBREVIATIONS %%%%%%%%%%%%%%%%

\DeclareMathOperator\supp{supp}
\DeclareMathOperator\dist{dist}
\DeclareMathOperator\diam{diam}

\newcommand{\scrP}{\mathscr{P}}

\newcommand{\scrX}{\mathscr{X}}

\newcommand{\BB}{\mathcal B}
\newcommand{\CC}{\mathcal C}

\newcommand{\MM}{\mathcal M}
\newcommand{\NN}{\mathcal N}

\newcommand{\YY}{\mathcal Y}

\def\Z{\mathbb{Z}}
\def\N{\mathbb{N}}
\def\R{\mathbb{R}}

\def\P{\mathbb{P}}
\def\E{\mathbb{E}}

\newcommand{\cc}{{\text{\rm c}}}
\newcommand{\texte}{\text{\rm e}}

\def \GW {{\mathcal{G}\mathcal{W}}}

\newcommand{\textd}{\text{\rm d}\mkern0.5mu}

\def\1{{\mathchoice {1\mskip-4mu\mathrm l}
{1\mskip-4mu\mathrm l} 
{1\mskip-4.5mu\mathrm l} {1\mskip-5mu\mathrm l}}}

\def \ba {\begin{array}}
\def \ea {\end{array}}

\def \P  {{\mathbb P}}
\def \E  {{\mathbb E}}

\def \cL {{\mathcal L}}

\def \cN {{\mathcal N}}
\def \cO {{\mathcal O}}
\def \cP {{\mathcal P}}

\def \cT {{\mathcal T}}
\def \GW {{\mathcal{G}\mathcal{W}}}

\def \e {\mathrm{e}}
\def \ee {\mathrm{e}}
\def \dd {\mathrm{d}}

\DeclareSymbolFont{symbolsC}{U}{pxsyc}{m}{n}
\DeclareMathSymbol{\opentimes}{\mathrel}{symbolsC}{93}

\newcommand{\Probgr}{\mathfrak{P}}

%%%%%%%%%%%%%%%%%%%%%%%%%%%%%%%%%%%%%%%%%%%%%

\title{The parabolic Anderson model\\ on a Galton-Watson tree revisited}

\author{

F.\ den Hollander
\footnotemark[1]
\\

D. Wang
\footnotemark[1]
}

\footnotetext[1]{
Mathematical Institute, Leiden University, P.O.\ Box 9512, 2300 RA Leiden, The Netherlands.
\\
Email: {\tt denholla@math.leidenuniuv.nl} ; {\tt d.wang@math.leidenuniuv.nl}}

\date{\today}

\begin{document}

\maketitle

\begin{abstract}
In \cite{dHKdS2020} a detailed analysis was given of the large-time asymptotics of the total mass of the solution to the parabolic Anderson model on a supercritical Galton-Watson random tree with an i.i.d.\ random potential whose marginal distribution is double-exponential. Under the assumption that the degree distribution has bounded support, two terms in the asymptotic expansion were identified under the quenched law, i.e., conditional on the realisation of the random tree and the random potential. The second term contains a variational formula indicating that the solution concentrates on a subtree with minimal degree according to a computable profile. The present paper extends the analysis to degree distributions with unbounded support. We identify the weakest condition on the tail of the degree distribution under which the arguments in \cite{dHKdS2020} can be pushed through. To do so we need to control the occurrence of large degrees uniformly in large subtrees of the Galton-Watson tree. 

\vspace{0.5cm}

\medskip\noindent
{\bf MSC2010:} 60H25, 82B44, 05C80.

\medskip\noindent
{\bf Keywords:} Parabolic Anderson model, Galton-Watson tree, double-exponential distribution, quenched Lyapunov exponent, variational formula.

\medskip\noindent
{\bf Acknowledgment:}
The work in this paper was supported through Gravitation-grant NETWORKS-024.002.003 of the Netherlands Organisation for Scientific Research (NWO). The authors thank G\"otz Kersting and Anton Wakolbinger for helpful discussions on large deviation properties of the Galton-Watson process.
\normalsize
\end{abstract}

\newpage

%\tableofcontents

\newpage

%%%%%%%%%% SECTION 1 %%%%%%%%%%%%%%%%%%%%%%%%%%%%%%

\section{Introduction and main results}
\label{s:intro}

Section~\ref{ss:introPAM} provides a brief introduction to the parabolic Anderson model. Section~\ref{ss:PAM} introduces basic notation and key assumptions. Section~\ref{ss:GW} states the main theorem and gives an outline of the remainder of the paper. 

%%%

\subsection{The PAM and intermittency}
\label{ss:introPAM}

The \emph{parabolic Anderson model} (PAM) is the Cauchy problem
\begin{equation}
\label{PAM}
\partial_t u(x,t) = \Delta_\scrX u(x,t) + \xi(x) u(x,t) , \qquad t>0, \, x \in \scrX,
\end{equation}
where $\scrX$ is an ambient space, $\Delta_\scrX$ is a Laplace operator acting on functions on $\scrX$, and $\xi$ is a random potential on $\scrX$. Most of the literature considers the setting where $\scrX$ is either $\Z^d$ or $\R^d$ with $d \geq 1$ (for mathematical surveys we refer the reader to \cite{A2016}, \cite{K2016}). More recently, other choices for $\scrX$ have been considered as well: the complete graph \cite{FM1990}, the hypercube \cite{AGH2016}, Galton-Watson trees \cite{dHKdS2020}, and random graphs with prescribed degrees \cite{dHKdS2020}.    

The main target for the PAM is a description of \emph{intermittency}: for large $t$ the solution $u(\cdot,t)$ of \eqref{PAM} concentrates on well-separated regions in $\scrX$, called \emph{intermittent islands}. Much of the literature has focussed on a detailed description of the size, shape and location of these islands, and the profiles of the potential $\xi(\cdot)$ and the solution $u(\cdot,t)$ on them. A special role is played by the case where $\xi$ is an i.i.d.\ random potential with a \emph{double-exponential} marginal distribution 
\begin{equation}
\label{e:DEexact}
\Prob(\xi(0) > u) = \ee^{-\ee^{u/\varrho}}, \qquad u \in \R,
\end{equation}
where $\varrho \in (0,\infty)$ is a parameter. This distribution turns out to be critical, in the sense that the intermittent islands neither grow nor shrink with time, and therefore represents a class of its own. 

The analysis of intermittency typically starts with a computation of the large-time asymptotics of the total mass, encapsulated in what are called \emph{Lyapunov exponents}. There is an important distinction between the \emph{annealed} setting (i.e., averaged over the random potential) and the \emph{quenched} setting (i.e., almost surely with respect to the random potential). Often both types of Lyapunov exponents admit explicit descriptions in terms of \emph{characteristic variational formulas} that contain information about where and how the mass concentrates in $\scrX$. These variational formulas contain a \emph{spatial part} (identifying where the concentration on islands takes place) and a \emph{profile part} (identifying what the size and shape of both the potential and the solution are on the islands). 

In the present paper we focus on the case where $\mathscr{X}$ is a Galton-Watson tree, in the quenched setting (i.e., almost surely with respect to the random tree and the random potential). In \cite{dHKdS2020} the large-time asymptotics of the total mass was derived under the assumption that the degree distribution has bounded support. The goal of the present paper is to relax this assumption to unbounded degree distributions. In particular, we identify the \emph{weakest condition on the tail of the degree distribution} under which the arguments in \cite{dHKdS2020} can be pushed through.  
To do so we need to control the occurrence of large degrees \emph{uniformly in large subtrees} of the Galton-Watson tree. 

%%%%%%%%

\subsection{The PAM on a graph}
\label{ss:PAM}

We begin with some basic definitions and notations (and refer the reader to \cite{A2016}, \cite{K2016} for more background).

Let $G = (V,E)$ be a \emph{simple connected undirected} graph, either finite or countably infinite. Let $\Delta_G$ be the Laplacian on $G$, i.e., 
\begin{equation}
\label{e:PAM}
(\Delta_G f)(x) := \sum_{ {y\in V:} \atop { \{x,y\} \in E} } [f(y) - f(x)], \qquad x \in V,\,f\colon\,V\to\R.
\end{equation}
Our object of interest is the non-negative solution of the Cauchy problem with localised initial condition,
\begin{equation}
\label{e:PAMdef}
\begin{array}{llll}
\partial_t u(x,t) &=& (\Delta_G u)(x,t) + \xi(x) u(x,t), &x \in V,\, t>0,\\
u(x,0) &=& \delta_\cO(x), &x \in V,
\end{array}
\end{equation}
where $\cO\in V$ is referred to as the \emph{root} of $G$. We say that $G$ is \emph{rooted at} $\cO$ and call $G=(V,E,\cO)$ a \emph{rooted graph}. The quantity $u(x,t)$ can be interpreted as the amount of mass present at time $t$ at site $x$ when initially there is unit mass at $\cO$. 

Criteria for existence and uniqueness of the non-negative solution to \eqref{e:PAMdef} are well-known (see \cite{GM1990}, \cite{GM1998} for the case $G=\Z^d$), and rely on the \emph{Feynman-Kac formula}
\begin{equation}
\label{e:FK}
u(x,t) = \E_\cO \left[\ee^{\int_0^t \xi(X_s) \textd s}\,\1\{X_t = x\}\right],
\end{equation}
where $X=(X_t)_{t \geq 0}$ is the continuous-time random walk on the vertices $V$ with jump rate $1$ along the edges $E$, and $\P_\cO$ denotes the law of $X$ given $X_0=\cO$. We are interested in the \emph{total mass} of the solution, 
\begin{equation}
\label{e:mass} 
U(t):= \sum_{x\in V} u(x,t) = \E_\cO \left[\ee^{\int_0^t \xi(X_s) \textd s}\right].
\end{equation}
Often we suppress the dependence on $G,\xi$ from the notation. Note that, by time reversal and the linearity of \eqref{e:PAMdef}, $U(t) = \hat{u}(0,t)$ with $\hat{u}$ the solution of \eqref{e:PAMdef} with a different initial condition, namely, $\hat{u}(x,0) = 1$ for all $x \in V$.

As in \cite{dHKdS2020}, throughout the paper we assume that the random potential $\xi = (\xi(x))_{x \in V}$ consists of i.i.d.\ random variables with marginal distribution satisfying:

\begin{assumption}{\bf [Asymptotic double-exponential potential]} 
\label{ass:pot}
$\mbox{}$\\
For some $\varrho \in (0,\infty)$,
\begin{equation}
\label{e:DE}
\Prob \left( \xi(0) \geq 0\right) = 1, \qquad \Prob \left( \xi(0) > u \right) = \ee^{-\ee^{u/\varrho}} \;\; 
\text{for } u \text{ large enough.}
\end{equation}
\end{assumption}

\medskip\noindent
The restrictions in \eqref{e:DE} are helpful to avoid certain technicalities that require no new ideas. In particular, \eqref{e:DE} is enough to guarantee existence and uniqueness of the non-negative solution to \eqref{e:PAMdef} on any discrete graph with at most exponential growth (as can be inferred from the proof  in \cite{GM1998} for the case $G=\Z^d$). All our results remain valid under milder restrictions (e.g.\ \cite[Assumption~(F)]{GM1998} plus an integrability condition on the lower tail of $\xi(0)$).

The following \emph{characteristic variational formula} is important for the description of the asymptotics of $U(t)$ when $\xi$ has a double-exponential tail. Denote by $\cP(V)$ the set of probability measures on $V$. For $p \in \cP(V)$, define
\begin{equation}
\label{e:defIJ}
I_E(p) := \sum_{\{x,y\} \in E} \left( \sqrt{p(x)} - \sqrt{p(y)}\,\right)^2,
\qquad J_V(p) := - \sum_{x \in V} p(x) \log p(x),
\end{equation}
and set
\begin{equation}
\label{e:defchiG}
\chi_G(\varrho) := \inf_{p \in \cP(V)} [I_E(p) + \varrho J_V(p)], \qquad \varrho \in (0,\infty).
\end{equation}
The first term in \eqref{e:defchiG} is the quadratic form associated with the Laplacian, describing the solution $u(\cdot,t)$ in the intermittent islands, while the second term in \eqref{e:defchiG} is the Legendre transform of the rate function for the potential, describing the highest peaks of $\xi(\cdot)$ in the intermittent islands. 

%%%

\subsection{The PAM on a Galton-Watson tree}
\label{ss:GW}

Let $D$ be a random variable taking values in $\N$. Start with a root vertex $\cO$, and attach edges from $\cO$ to $D$ first-generation vertices. Proceed recursively: after having attached the $n$-th generation of vertices, attach to each one of them independently a number of vertices that has distribution $D$, and declare the union of these vertices to be the $(n+1)$-th generation of vertices. Denote by $\GW=(V,E)$ the graph thus obtained and by $\Probgr$ its probability law. Write $\mathcal{P}$ and $\mathcal{E}$ to denote probability and expectation for $D$, and $\supp(D)$ to denote the support of $\mathcal{P}$. The law of $D$ is the offspring distribution of $\GW$, the law of $D$ is the degree distribution of $\GW$. 

Throughout the paper, we assume that the degree distribution satisfies:

\begin{assumption}{\bf [Exponential tails]}
\label{ass:deg}
$\mbox{}$\\
{\rm (1)} $d_{\min} := \min \supp(D) \geq 2$ and $\mathcal{E}[D] \in (2,\infty)$.\\
{\rm (2)} $\mathcal{E}\big[\ee^{aD}\big] < \infty$ for all $a \in (0,\infty)$.
\end{assumption}

\noindent
Under this assumption, $\GW$ is $\Probgr$-a.s.\ an infinite tree. Moreover,
\begin{equation}
\label{e:volumerateGW}
\lim_{r \to \infty} \frac{\log |B_r(\cO)|}{r} = \log \mathcal{E}[D] =: \vartheta \in (0,\infty) \qquad \Probgr-a.s.,
\end{equation}
where $B_r(\cO) \subset V$ is the ball of radius $r$ around $\cO$ in the graph distance (see e.g.\ \cite[pp.~134--135]{LP2016}). Note that this ball depends on $\GW$ and therefore is random. For our main result we need an assumption that is much stronger than Assumption~\ref{ass:deg}(2). 

\begin{assumption}{\bf [Super-double-exponential tails]}
\label{ass:degextra}
There exists a function $f\colon\,(0,\infty) \to (0,\infty)$ satisfying $\lim_{s\to\infty} f(s) = 0$ and $\lim_{s\to\infty} f'(s) = 0$ such that
\begin{equation}
\label{e:suptail}
\limsup_{s\to\infty} \ee^{-s} \log \mathcal{P}(D>s^{f(s)}) <  -2\vartheta.
\end{equation}  
\end{assumption} 

To state our main result, we define the constant
\begin{equation}
\label{e:deftildechi}
\widetilde{\chi}(\varrho) := \inf \big\{ \chi_T(\varrho) \colon\, T \text{ is an infinite tree with degrees in } \supp(D) \big\},
\end{equation}
with $\chi_G(\varrho)$ defined in \eqref{e:defchiG}, and abbreviate
\begin{equation}
\label{mathfrakrdef}
\mathfrak{r}_t = \frac{\varrho t}{\log\log t}.
\end{equation}

\begin{theorem}{\bf [Quenched Lyapunov exponent]}
\label{t:QLyapGWT}
Subject to Assumptions~{\rm \ref{ass:pot}}--{\rm \ref{ass:degextra}},
\begin{equation}
\label{e:QLyapGWT}
\frac{1}{t} \log U(t) = \varrho \log (\vartheta\mathfrak{r}_t) 
-\varrho - \widetilde{\chi}(\varrho) + o(1), \quad t \to \infty, 
\qquad (\Prob \times \Probgr)\text{-a.s.}
\end{equation}
\end{theorem}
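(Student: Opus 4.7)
The strategy is to run the argument of \cite{dHKdS2020}, designed for bounded degree distributions, while replacing the deterministic bound on the maximum degree with a random bound that grows slowly with the radius of the ball in which the random walk is confined. The theorem then follows from matching upper and lower bounds. The lower bound is inherited essentially unchanged from \cite{dHKdS2020}: one exhibits, $\Probgr$-a.s., a finite subtree $T_\ast$ of $\GW$ at bounded distance from $\cO$ whose Dirichlet eigenvalue (cf.\ \eqref{e:defIJ}--\eqref{e:defchiG}) approaches $\widetilde{\chi}(\varrho)$, uses i.i.d.\ copies of the potential on translates of $T_\ast$ to find with sufficient probability a translate on which $\xi$ has its typical maximum $\varrho\log(\vartheta\mathfrak{r}_t)$ together with the correct double-exponential profile, and then restricts the Feynman--Kac expectation \eqref{e:FK} to walks that hit this translate. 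Since $\widetilde{\chi}(\varrho)$ is defined as an infimum over all trees with degrees in $\supp(D)$, unboundedness of $\supp(D)$ does not degrade the lower bound and no input from Assumption~\ref{ass:degextra} is required.

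The upper bound is where the new input is needed. As in \cite{dHKdS2020}, the Feynman--Kac formula \eqref{e:FK} is first localised: a comparison with the transition kernel of $X$ shows that walks leaving a ball $B_{R_t}(\cO)$ with $R_t = C\mathfrak{r}_t$ (for a large constant $C$) contribute negligibly to $U(t)$ at the logarithmic scale of \eqref{e:QLyapGWT}. On the complementary event $U(t)$ is at most $\ee^{t\lambda_{R_t}(\xi)}$, where $\lambda_{R_t}(\xi)$ is the principal Dirichlet eigenvalue of $\Delta_{\GW}+\xi$ on $B_{R_t}(\cO)$. This eigenvalue is then bounded by partitioning $B_{R_t}(\cO)$ into smaller pieces, estimating each contribution via a Rayleigh quotient with a cut-off matched to the typical potential maximum $\varrho\log(\vartheta\mathfrak{r}_t)$, and taking the infimum over feasible degree profiles in $\supp(D)$, which produces the constant $-\widetilde{\chi}(\varrho)$.

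The new ingredient, and the main obstacle, is the uniform control of large degrees inside $B_{R_t}(\cO)$. The volume estimate \eqref{e:volumerateGW} gives $|B_{R_t}(\cO)|=\ee^{(\vartheta+o(1))R_t}$ $\Probgr$-a.s., while Assumption~\ref{ass:degextra} yields $\mathcal{P}(D>s^{f(s)})\le \ee^{-(2\vartheta+\delta)s}$ for some $\delta>0$ and all $s$ large. A union bound over $B_{R_t}(\cO)$ then gives summable probability for the event that some vertex in $B_{R_t}(\cO)$ has degree exceeding $R_t^{f(R_t)}$, and Borel--Cantelli delivers
\begin{equation*}
\max_{x\in B_{R_t}(\cO)} D(x) \le R_t^{f(R_t)} = R_t^{o(1)}, \qquad \Probgr\text{-a.s., for all }t\text{ large.}
\end{equation*}
Because this bound is subpolynomial in $R_t$, it may be inserted into the eigenvalue estimates of \cite{dHKdS2020} in place of the deterministic $d_{\max}$ without altering the leading-order terms, with the resulting errors absorbed into $o(1)$. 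The factor $-2\vartheta$ in \eqref{e:suptail} is calibrated precisely to make this first-moment Borel--Cantelli argument close: any weaker tail decay would admit, almost surely infinitely often in $t$, isolated vertices of much larger degree inside $B_{R_t}(\cO)$, and such vertices would create local eigenvalues large enough to spoil the identification of $\widetilde{\chi}(\varrho)$ in the upper bound.
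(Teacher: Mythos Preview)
Your upper bound sketch has a genuine gap. Localising at $R_t = C\mathfrak{r}_t$ via a transition-kernel comparison does not work: the crude bound $\P_\cO(J_t\ge r)\le (et(\log r)^{\delta_r}/r)^r$ is only useful when $r\gg t$, which is why the paper localises at $t\log t$ (Lemma~\ref{l:longpaths}), not at a constant multiple of $\mathfrak{r}_t$. More seriously, even if you could localise at $R_t=C\mathfrak{r}_t$, the inequality $U(t)\le \ee^{t\lambda_{R_t}(\xi)}$ together with the eigenvalue bound of Corollary~\ref{c:eigislandsGW} yields at best $\tfrac1t\log U(t)\le \varrho\log(\vartheta C\mathfrak{r}_t)-\widetilde{\chi}(\varrho)+\varepsilon$, which is \emph{missing the term $-\varrho$}. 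That term is not an eigenvalue correction; it is the cost paid by the walk to travel from $\cO$ to the intermittent island at distance $\asymp\mathfrak{r}_t$. In the paper this cost emerges from the path-expansion machinery of Section~\ref{s.pathexpansions} (Proposition~\ref{p:massclass}), which produces the extra penalty $-\tfrac{|z_\pi|}{t}\log\log r$ in \eqref{e:prUBpieces2}, and the $-\varrho$ then appears after optimising the function $F_t(r)$ in \eqref{e:prUBpieces1}. Your Rayleigh-quotient partition of $B_{R_t}(\cO)$ cannot recover this.

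Your claim that Assumption~\ref{ass:degextra} is not needed for the lower bound is also incorrect. The lower bound requires estimating $\P_\cO(\tau_z\le s)$ from below, and the paper does this via $\prod_{i=1}^{|z|}\deg(y_i)^{-1}$ along the unique tree path to $z$; see \eqref{lowbound2}. With only Assumption~\ref{ass:deg}(2) one gets degrees up to $\delta\ell$, hence a penalty of order $\ell\log\ell/t\to\infty$ for $\ell=\mathfrak{r}_t$. Assumption~\ref{ass:degextra} is precisely what makes this penalty $\delta_\ell\,\ell\log\log\ell/t=o(1)$; this is exactly the computation in \eqref{Ulowboundwithr}--\eqref{keyasymp2}, and the remark at the end of Section~\ref{s.proofmain} makes the point explicit.
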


\noindent
With Theorem~\ref{t:QLyapGWT} we have completed our task to relax the main result in \cite{dHKdS2020} to degree distributions with unbounded support. The extension comes at the price of having to assume a tail that decays faster than double-exponential as shown in \eqref{e:suptail}. This property is needed to control the occurrence of large degrees \emph{uniformly in large subtrees} of $\GW$. No doubt Assumption~\ref{ass:degextra} is stronger than is needed, but to go beyond would require a major overhaul of the methods developed in \cite{dHKdS2020}, which remains a challenge.

In \eqref{e:PAMdef} the initial mass is located at the root. The asymptotics in \eqref{e:QLyapGWT} is robust against different choices.   

A heuristic explanation where the terms in \eqref{e:QLyapGWT} come from was given in \cite[Section 1.5]{dHKdS2020}. The asymptotics of $U(t)$ is controlled by random walk paths in the Feynman-Kac formula in \eqref{e:mass} that run within time $\mathfrak{r}_t/\varrho\log\mathfrak{r}_t$ to an \emph{intermittent island} at distance $\mathrm{r}_t$ from $\cO$, and afterwards stay near that island for the rest of the time. The intermittent island turns out to consist of a subtree with degree $d_{\min}$ where the potential has a height $\varrho\log(\vartheta\mathfrak{r}_t)$ and a shape that is the solution of a variational formula restricted to that subtree. The first and third term in \eqref{e:QLyapGWT} are the contribution of the path after it has reached the island, the second term is the cost for reaching the island.       

For $d \in \N\setminus\{1\}$, let $\cT_d$ be the infinite homogeneous tree in which every node has downward degree $d$. It was shown in \cite{dHKdS2020} that if $\varrho \geq 1/\log(d_{\rm min}+1)$, then
\begin{equation}
\widetilde{\chi}(\varrho) = \chi_{\cT_{d_{\min}}}(\varrho).
\end{equation} 
Presumably $\cT_{d_{\min}}$ is the \emph{unique} minimizer of \eqref{e:deftildechi}, but proving so would require more work.

\paragraph{Outline.}

The remainder of the paper is organised as follows. Section~\ref{s:struct} collects some structural properties of Galton-Watson trees. Section~\ref{s.preliminaries} contains several preparatory lemmas, which identify the maximum size of the islands where the potential is suitably high, estimate the contribution to the total mass in \eqref{e:mass} by the random walk until it exits a subset of $\GW$, bound the principal eigenvalue associated with the islands, and estimate the number of locations where the potential is intermediate.  Section~\ref{s.pathexpansions} uses these preparatory lemmas to find the contribution to the Feynman-Kac formula in \eqref{e:mass} coming from various sets of paths. Section~\ref{s.proofmain} uses these contributions to prove Theorem~\ref{t:QLyapGWT}.  Appendices~\ref{appA}--\ref{appB} contain some facts about variational formulas and largest eigenvalues that are needed in Section~\ref{s.preliminaries}.  

Assumptions~\ref{ass:pot}--\ref{ass:deg} are needed throughout the paper. Only in Sections~\ref{s.pathexpansions}--\ref{s.proofmain} do we need Assumption~\ref{ass:degextra}.

%%%%%%%%% SECTION 2 %%%%%%%%%%%%%%%%%%%%%%%%%%%%%%%%%%%%%%

\section{Structural properties of the Galton-Watson tree}
\label{s:struct}

In the section we collect a few structural properties of $\GW$ that play an important role throughout the paper. None of these properties was needed in \cite{dHKdS2020}. Section~\ref{ss.volumes} looks at volumes, Section~\ref{ss.degrees} at degrees, Section~\ref{ss.treeanimals} at tree animals. 

%%%

\subsection{Volumes}
\label{ss.volumes}

Let $Z_k$ be the number of offspring in generation $k$, i.e., 
\begin{equation}
Z_k = |\{x\in V\colon\,d(x,\cO) = k\}|,
\end{equation}
where $d(x,\cO)$ is the distance from $\cO$ to $x$. Let $\mu = \mathcal{E}[D]$. Then there exists a random variable $W \in (0,\infty)$ such that
\begin{equation}
W_k := \ee^{-k\vartheta} Z_k = \mu^{-k} Z_k \rightarrow W \qquad \Probgr\text{-a.s. as } k \to \infty.   
\end{equation}
It is shown in \cite[Theorem 5]{KA1994} that 
\begin{equation}
\label{supgeodecay}
\exists\,C<\infty, c>0\colon\,\quad \mathfrak{P}(|W_k-W| \geq \varepsilon) 
\leq C\ee^{-c\,\varepsilon^{2/3}\mu^{n/3}} \qquad \forall\,\varepsilon > 0,\,k \in \mathbb{N}.
\end{equation}

\noindent In addition, it is shown in \cite[Theorems 2--3]{BB1993} that if $D$ is bounded, then
\begin{eqnarray}
\label{Wrighttail}
-\log \mathfrak{P}(W \geq x) &=& x^{\gamma^+/(\gamma^+-1)}\,[L^+(x) + o(1)],
\qquad x \to \infty,\\
\label{Wlefttail}
-\log \mathfrak{P}(W \leq x) &=& x^{-\gamma^-/(1-\gamma^-)}\,[L^-(x)+ o(1)],
\qquad x \downarrow 0,
\end{eqnarray}
where $\gamma^+ \in (1,\infty)$ and $\gamma^- \in (0,1)$ are the unique solutions of the equations
\begin{equation}
\mu^{\gamma^+} = d_{\max}, \qquad \mu^{\gamma^-} = d_{\min},
\end{equation} 
with $L^+,L^-\colon (0,\infty) \to (0,\infty)$ real-analytic functions that are multiplicatively periodic with period $\mu^{\gamma^+-1}$, respectively, $\mu^{1-\gamma^-}$. Note that Assumption~\ref{ass:deg}(1) guarantees that $\gamma^- \neq 1$.

The tail behaviour in \eqref{Wrighttail} requires that $d_{\max}<\infty$. In our setting we have $d_{\max}=\infty$, which corresponds to $\gamma^+=\infty$, and so we expect exponential tail behaviour. The following lemma provides a rough bound.

\begin{lemma}{\bf [Exponential tail for generation sizes]}
If there exists an $a>0$ such that $\mathcal{E}[\ee^{aD}] <\infty$, then there exists an $a_* > 0$ such that $\mathfrak{E}[\ee^{a_* W}] < \infty$.
\end{lemma}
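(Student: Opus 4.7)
The plan is to exploit the branching recursion for $Z_k$ to iterate the cumulant generating function of $D$, and then pass to the limit via Fatou's lemma.

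First I will rewrite $\mathfrak{E}[\ee^{aW_k}] = \mathcal{E}[\ee^{(a/\mu^k)Z_k}]$ in closed form. Since $Z_{k+1}$ is a sum of $Z_k$ i.i.d.\ copies of $D$, iterating the branching identity gives
\begin{equation*}
\mathcal{E}[\ee^{\lambda Z_k}] = \exp\bigl(f^{\circ k}(\lambda)\bigr),
\end{equation*}
where $f(\lambda) := \log\mathcal{E}[\ee^{\lambda D}]$ is finite and smooth on some interval $[0,\lambda_0)$ by hypothesis, with $f(0)=0$ and $f'(0)=\mu$, and $f^{\circ k}$ denotes the $k$-fold composition. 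Substituting $\lambda = a/\mu^k$ reduces the problem to a uniform-in-$k$ upper bound on $f^{\circ k}(a/\mu^k)$.

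The core of the argument is to show that this iterate stays bounded for sufficiently small $a$. Smoothness of $f$ near $0$ gives $f(y) \leq \mu y + K y^2$ on $[0,\delta]$ for some $K,\delta > 0$. Writing $y_j := f^{\circ j}(a/\mu^k)$ for $j=0,\ldots,k$ and setting $y_j = (a/\mu^{k-j}) C_j$, the recursion becomes
\begin{equation*}
C_{j+1} \leq C_j\bigl(1 + Ka\,C_j\,\mu^{-(k-j+1)}\bigr), \qquad C_0 = 1.
\end{equation*}
A standard bootstrap induction on $j$, using that the partial sums $\sum_i \mu^{-(k-i+1)}$ form a geometric series bounded by $1/(\mu-1)$ independently of $k$ and keeping the iterates inside $[0,\delta]$, yields $C_j \leq 2$ for all $j \leq k$ provided $a$ is chosen small enough. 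Hence $y_k = aC_k \leq 2a$, i.e.\ $f^{\circ k}(a/\mu^k) \leq 2a$, so $\mathfrak{E}[\ee^{aW_k}] \leq \ee^{2a}$ uniformly in $k$. Since $W_k \to W$ $\mathfrak{P}$-a.s., Fatou's lemma then yields $\mathfrak{E}[\ee^{aW}] \leq \ee^{2a} < \infty$ with $a_* := a$.

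The main obstacle is ensuring that $f^{\circ k}(a/\mu^k)$ remains bounded despite being composed $k$ times, when the linear part of $f$ exactly cancels the $\mu^{-k}$ normalisation. A priori one might fear that the nonlinear correction $Ky^2$ compounds, but the iterates visit geometrically shrinking values $y_j \asymp a\mu^{-(k-j)}$, so the accumulated relative error $\sum_j Ky_j/\mu$ is of order $Ka$ uniformly in $k$ and can be absorbed by choosing $a$ small. It is precisely this feature that explains why only Assumption~\ref{ass:deg}(2) (and not the much stronger Assumption~\ref{ass:degextra}) is needed here.
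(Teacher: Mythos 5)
Your proof is correct and follows essentially the same route as the paper: a quadratic bound $f(y)\le \mu y + Ky^2$ on the cumulant generating function near $0$, iteration of the branching recursion with the accumulated error controlled by the geometric series $\sum_j \mu^{-j}$ (uniformly in the generation), and Fatou's lemma. The only difference is bookkeeping: the paper lets the exponent shrink along a convergent product ($a_n \downarrow a_* > 0$) so that $n \mapsto \mathfrak{E}[\ee^{a_n W_n}]$ is non-increasing, whereas you keep the exponent fixed and bound the iterate $f^{\circ k}(a/\mu^k)$ directly---equivalent in substance.
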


\begin{proof}
First note that if there exists an $a>0$ such that $\mathcal{E}[\ee^{aD}] <\infty$, then there exist $b>0$ large and $c >0$ small such that
\begin{equation}
\varphi(a):= \mathcal{E}[\ee^{aD}] \leq \ee^{\mu a + b a^2}
\qquad \forall\, 0 < a < c.    
\end{equation}
Hence
\begin{equation}
\mathfrak{E}[\ee^{a Z_{n+1}}] = \mathfrak{E}[\varphi(a)^{Z_n}] \leq \mathfrak{E}[\ee^{(\mu a +b a^2)Z_n}]      
\end{equation}
and consequently, because $\mu>1$,
\begin{equation}
\label{proofineq}
\mathfrak{E}\left[\ee^{a W_{n+1}}\right] 
\leq \mathfrak{E}\left[\ee^{(a + ba^2\mu^{-(n+2)})W_n}\right]
\leq \mathfrak{E}\left[\ee^{a \exp(bc\mu^{-(n+2)})W_n}\right].   
\end{equation}
Put $a_n := c \exp(-bc \sum_{k=0}^{n-1} \mu^{-(k+2)})$, which satisfies $0< a_n \leq c$. From the last inequality in \eqref{proofineq} it follows that 
\begin{equation}
\mathfrak{E}\left[\ee^{a_{n+1}W_{n+1}}\right]
\leq \mathfrak{E}\left[\ee^{a_nW_n}\right].    
\end{equation}
Since $n \mapsto a_n$ is decreasing with $\lim_{n\to\infty} a_n = a_* >0$, Fatou's lemma gives  
\begin{equation}
\label{FiniteWMGF}
\mathfrak{E}\left[\ee^{a_* W}\right] \leq \mathfrak{E}\left[\ee^{a_0 W_0}\right].  
\end{equation}
Because $\mathcal{E}[\ee^{a_0 W_0}] = \ee^{a_0}<\infty$, we get the claim. 
\end{proof}

The following lemma says that $\mathfrak{P}$-a.s.\ a ball of radius $R_r$ centred anywhere in $B_r(\cO)$ has volume $\eee^{\vartheta R_r + o(R_r)}$ as $r\to\infty$, provided $R_r$ is large compared to $\log r$.

\begin{lemma}{\bf [Volumes of large balls]}
\label{lem:vol}
Subject to Assumption~{\rm \ref{ass:deg}(1)}, if there exists an $a>0$ such that $\mathcal{E}[\ee^{aD}] <\infty$, then for any $R_r$ satisfying $\lim_{r\to\infty} R_r/\log r = \infty$, 
\begin{equation}
\label{growth}
\liminf_{r\to\infty} \frac{1}{R_r} \log \Big(\inf_{x \in B_r(\cO)} |B_{R_r}(x)|\Big) 
= \limsup_{r\to\infty} \frac{1}{R_r} \log \Big(\sup_{x \in B_r(\cO)} |B_{R_r}(x)|\Big) 
= \vartheta \qquad \mathfrak{P}-a.s.
\end{equation} 
\end{lemma}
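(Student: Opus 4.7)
My plan is to split the double equality into the two genuine one-sided bounds $\limsup_r R_r^{-1}\log\sup_{x\in B_r(\cO)}|B_{R_r}(x)| \le \vartheta$ and $\liminf_r R_r^{-1}\log\inf_{x\in B_r(\cO)}|B_{R_r}(x)| \ge \vartheta$; the matching reverse inequalities are automatic from $\cO \in B_r(\cO)$ combined with \eqref{e:volumerateGW} applied along $R_r\to\infty$. Both one-sided bounds will be deduced from a probability estimate for a fixed $x$ that is \emph{doubly} exponentially small in $R_r$, so that the union bound over the $e^{\vartheta r + o(r)}$ vertices of $B_r(\cO)$ closes whenever $R_r/\log r \to \infty$, and Borel--Cantelli then delivers the almost sure statement.

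For the lower bound I would use $|B_{R_r}(x)| \ge Z_{R_r}^{(x)}$, the size of the $R_r$-th generation of the Galton--Watson subtree rooted at $x$, fix $\delta \in (0,\vartheta/2)$, and aim to show $W_{R_r}^{(x)} := \mu^{-R_r}Z_{R_r}^{(x)} \ge e^{-\delta R_r}$ simultaneously for all $x \in B_r(\cO)$. This is enough provided (i) $|W_{R_r}^{(x)} - W^{(x)}| \le \tfrac12 e^{-\delta R_r}$ and (ii) $W^{(x)} \ge \tfrac32 e^{-\delta R_r}$. For (i), \eqref{supgeodecay} yields $\mathfrak{P}(|W_{R_r}^{(x)} - W^{(x)}| \ge \tfrac12 e^{-\delta R_r}) \le C\exp(-c\,e^{(\vartheta-2\delta)R_r/3})$; for (ii) one needs a doubly exponential lower tail $\mathfrak{P}(W \le y) \le \exp(-c\, y^{-\alpha})$ with $\alpha = \log d_{\min}/\log(\mu/d_{\min}) > 0$. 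In the bounded-degree case this is \eqref{Wlefttail}, but since $d_{\max}=\infty$ here I would re-derive it from the branching identity $W = \mu^{-k}\sum_{i=1}^{Z_k}W^{(i)}$ with i.i.d.\ copies $W^{(i)}$ of $W$: using $Z_k \ge d_{\min}^k$ one obtains $W \ge (d_{\min}/\mu)^k\,\bar W_{d_{\min}^k}$, and Cram\'er's large-deviation inequality applied to the sample mean $\bar W_{d_{\min}^k}$ (legitimate because the preceding lemma supplies $\mathfrak{E}[e^{a_* W}] < \infty$) together with the choice $k = \log(1/(2y))/\log(\mu/d_{\min})$ gives the claim.

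For the upper bound I would decompose $B_{R_r}(x) = \bigsqcup_{j=0}^{R_r \wedge d(\cO,x)} R_j$ according to the most recent common ancestor $a_j$ of $x$ and the candidate vertex: $R_0$ is the subtree rooted at $x$ truncated to depth $R_r$, while for $j \ge 1$ the piece $R_j$ is $\{a_j\}$ together with the subtrees rooted at the children of $a_j$ other than $a_{j-1}$, each truncated to depth $R_r - j - 1$. Since $d_{\min} \ge 2$, the generation sizes of any subtree satisfy $\sum_{l=0}^s Z_l \le 2 Z_s = 2\mu^s W_s$, so
\[
|B_{R_r}(x)| \;\le\; R_r + 2\mu^{R_r}\Bigl(W_{R_r}^{(x)} + \sum_{j=1}^{R_r}\mu^{-j}\,\widetilde S_j^{(x)}\Bigr),
\]
where $\widetilde S_j^{(x)}$ is the sum of the $W_{R_r-j-1}$'s of the sibling subtrees at level $j$ (and is taken as $0$ when the $j$-th ancestor does not exist). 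Conditioning on the spine $(D_{a_j})_j$ makes these summands independent, and combining the uniform bound $\sup_l \mathfrak{E}[e^{a_* W_l}] \le e^{a_0}$ inherited from the preceding lemma with the hypothesis $\mathcal{E}[e^{aD}] < \infty$ yields, by a direct Chernoff computation, $\mathfrak{E}[\exp(\lambda(W_{R_r}^{(x)} + \sum_j\mu^{-j}\widetilde S_j^{(x)}))] \le C$ for all sufficiently small $\lambda > 0$, uniformly in $x$ and in $r$. A Markov bound then produces the required doubly exponential tail, and the union bound over $B_r(\cO)$ plus Borel--Cantelli finishes the argument.

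The step that will require the most care is ingredient (ii) for the lower bound: the doubly exponential lower tail for $W$ is what lets the union bound over $|B_r(\cO)| = e^{\vartheta r + o(r)}$ many vertices close, and it has to be obtained outside the bounded-degree regime where \eqref{Wlefttail} was established. Fortunately the iteration sketched above uses only the lower tail of $D$ through $d_{\min} \ge 2$, so the presence of $d_{\max} = \infty$ is harmless.
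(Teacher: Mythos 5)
Your proposal is correct in substance, but it takes a partly different route from the paper, and one step needs more care. The paper never works with full balls directly: it proves both bounds for \emph{lower} (descendant) balls $B^\downarrow_{R_r}(x)$, using the identity $\ee^{-\vartheta R_r}\sum_{k\le R_r}Z_k=\sum_k W_k\ee^{-\vartheta(R_r-k)}$ together with the exponential moment of $W$ from the preceding lemma, the concentration bound \eqref{supgeodecay}, and the left-tail asymptotics \eqref{Wlefttail}; it then passes to genuine balls by the sandwich $B^\downarrow_{R_r}(x)\subseteq B_{R_r}(x)\subseteq\bigcup_{k\le R_r}B^\downarrow_{R_r}(x[-k])$, which costs only a factor $R_r+1$ and keeps every union bound over vertices of $B_r(\cO)$ in the clean form ``condition on $Z_k=l$, multiply by $l$, sum $\mathfrak{E}[Z_k]=O(\ee^{\vartheta r})$''. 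Your lower bound is close in spirit (generation size of the subtree at $x$, plus \eqref{supgeodecay}), and your self-contained derivation of the lower tail $\mathfrak{P}(W\le y)\le\exp(-c\,y^{-\gamma^-/(1-\gamma^-)})$ from $W\ge(d_{\min}/\mu)^k\bar W_{d_{\min}^k}$ and a Chernoff bound is a genuine plus: the paper quotes \eqref{Wlefttail} from a result stated for bounded $D$, so your argument, which uses only $d_{\min}\ge2$, actually closes a point the paper glosses over. (In the degenerate case $\mu=d_{\min}$, i.e.\ constant $D$, your exponent is ill-defined but $W\equiv1$ and the bound is trivial — the same caveat applies to the paper's $\gamma^-$.)

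The step that is not yet tight is the upper bound. Your most-recent-common-ancestor decomposition and the estimate $\sum_{l\le s}Z_l\le 2Z_s$ are fine, and the exponential moment bound $\sup_l\mathfrak{E}[\ee^{a_*W_l}]\le\ee^{a_0}$ you import is available. But the union bound over $x\in B_r(\cO)$ is an expected-count bound over a \emph{random} vertex set, and for your decomposition the per-vertex event involves the sibling subtrees along the spine of $x$; the relevant spine law in $\mathfrak{E}[\#\{x\in\cZ_k:\,\cdot\,\}]$ is then the size-biased one (many-to-one lemma), not i.i.d.\ copies of $D$, because vertices with large offspring are ancestors of more vertices of generation $k$. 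Your ``direct Chernoff computation, uniformly in $x$'' silently assumes the unbiased spine. The fix is routine — $\mathcal{E}[D\ee^{a'D}]<\infty$ for $a'<a$, so the size-biased degrees still have small exponential moments and your product over $j$ of factors $\exp(c\lambda\mu^{-j}(\widehat D_j-1))$ still converges — but it should be stated; alternatively you can avoid the spine altogether by adopting the paper's sandwich through ancestors' lower balls, for which conditioning on $Z_k$ suffices and no size-biasing appears.
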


\begin{proof}
We first prove the claim for lower balls. Afterwards we use a sandwich argument to get the claim for balls.    

For $y\in\GW$ that lies $k$ generations below $\cO$, let $y[-i]$, $0 \leq i\leq k$ be the vertex that lies $i$ generations above $y$. Define the \emph{lower ball} of radius around $y$ as
\begin{equation}
B^\downarrow_r(y):= \{x\in V\colon\, \exists\, 0 \leq i \leq r\ \text{with}\ x[-i] = y\}.
\end{equation}
Note that $B^\downarrow_r(\cO) = B_r(\cO)$. Let $\cZ_k$ denote the vertices in the $k$-th generation. To get the upper bound, pick $\delta>0$ and estimate
\begin{equation}
\begin{aligned}
&\mathfrak{P}\Big(\sup_{x \in B_r(\cO)} |B_{R_r}^\downarrow(x)| \geq \ee^{(1+\delta)\vartheta R_r}\Big) 
\leq \sum_{k=0}^r \mathfrak{P}\Big(\sup_{x \in \cZ_k} |B_{R_r}^\downarrow(x)| \geq \ee^{(1+\delta)\vartheta R_r}\Big)\\
&\quad = \sum_{k=0}^r\sum_{l\in\mathbb{N}} \mathfrak{P}\Big(\sup_{x \in \cZ_k} |B_{R_r}^\downarrow(x)| 
\geq \ee^{(1+\delta)\vartheta R_r} ~\Big|~ Z_k =l\Big) \mathfrak{P}(Z_k =l)\\
&\quad \leq \sum_{k=0}^r\sum_{l\in\mathbb{N}} l\ 
\mathfrak{P}\Big(|B^\downarrow_{R_r}(\cO)| \geq \ee^{(1+\delta)\vartheta R_r}\Big)\mathfrak{P}(Z_k= l) \\
&\quad = \mathfrak{P}\Big(|B^\downarrow_{R_r}(\cO)| \geq \ee^{(1+\delta)\vartheta R_r}\Big)
\sum_{k=0}^r \mathfrak{E}(Z_k).
\end{aligned}
\end{equation}
By \eqref{e:volumerateGW}, $\sum_{k=0}^r \mathfrak{E}(Z_k) = \frac{\ee^{\vartheta (r+1)} -1}{\ee^\vartheta-1} = O(\ee^{\vartheta r})$, and so in order to be able to apply the Borel-Cantelli lemma, it suffices to show that the probability in the last line decays faster than exponentially in $r$ for any $\delta>0$. To that end, estimate
\begin{equation}
\begin{aligned}
&\Probgr\Big(|B^\downarrow_{R_r}(\cO)| \geq \ee^{(1+\delta)\vartheta R_r}\Big) 
= \Probgr\Big(\ee^{-\vartheta R_r} \sum_{k=0}^{R_r} Z_k \geq \ee^{\delta\vartheta R_r}\Big)\\
&\quad = \Probgr\Big(\sum_{k=0}^{R_r} W_k\,\ee^{-\vartheta (R_r-k)} \geq \ee^{\delta\vartheta R_r}\Big)
\leq \sum_{k=0}^{R_r} \Probgr\Big(W_k\,\ee^{-\vartheta (R_r-k)} \geq \frac{1}{R_r+1} \ee^{\delta \vartheta R_r}\Big)\\
&\quad = \sum_{k=0}^{R_r} \Probgr\Big(W+(W_k-W) \geq \frac{1}{R_r+1} \ee^{\delta \vartheta R_r} \ee^{\vartheta (R_r-k)} \Big)\\
&\quad \leq \sum_{k=0}^{R_r} \Probgr\Big(W \geq \frac{1}{2(R_r+1)} \ee^{\delta \vartheta R_r} \ee^{\vartheta (R_r-k)} \Big)\\
&\qquad \qquad
+ \sum_{k=0}^{R_r} \Probgr\Big(|W_k-W| \geq \frac{1}{2(R_r+1)} \ee^{\delta \vartheta R_r} \ee^{\vartheta (R_r-k)} \Big)\\
&\quad \leq \mathfrak{E}[\ee^{a_* W}]\sum_{k=0}^{R_r} 
\exp\Big(-a_*\frac{1}{2(R_r+1)} \ee^{\delta \vartheta R_r} \ee^{\vartheta (R_r -k)}\Big) \\
&\qquad \qquad 
+ \sum_{k=0}^{R_r} C 
\exp\Big(-c\Big[\frac{1}{2(R_r+1)}\,\ee^{\delta\vartheta R_r}\,\ee^{\vartheta(R_r-k)}\Big]^{2/3}(\ee^\vartheta)^{k/3}\Big)\\
&\quad \leq \mathfrak{E}[\ee^{a_* W}](R_r+1) \exp\Big(-a_*\frac{1}{2(R_r+1)}\ee^{\delta \vartheta R_r}\Big)\\
&\qquad\qquad  
+ C(R_r+1) \exp\Big(-c\Big[\frac{1}{2(R_r+1)}\,\ee^{\delta\vartheta R_r}\Big]^{2/3}\Big),
\end{aligned}
\end{equation}
where we use \eqref{supgeodecay} with $\mu = \ee^\vartheta$. This produces the desired estimate. 

To get the lower bound, pick $0<\delta<1$ and estimate
\begin{equation}
\begin{aligned}
&\mathfrak{P}\Big(\inf_{x \in B_r(\cO)} |B_{R_r}^\downarrow(x)| \leq \ee^{(1-\delta)\vartheta R_r}\Big) 
\leq \sum_{k=0}^r \mathfrak{P}\Big(\inf_{x \in \cZ_k} |B_{R_r}^\downarrow(x)| \leq \ee^{(1-\delta)\vartheta R_r}\Big)\\
&\quad = \sum_{k=0}^r\sum\limits_{l\in\N} \mathfrak{P}\Big(\inf_{x \in \cZ_k} |B_{R_r}^\downarrow(x)| 
\leq \ee^{(1-\delta)\vartheta R_r} ~\Big|~ Z_k =l\Big)\mathfrak{P}(Z_k =l)\\
&\quad \leq \sum_{k=0}^r\sum\limits_{l\in\N} 
l\,\mathfrak{P}\Big(|B^\downarrow_{R_r}(\cO)| \leq \ee^{(1-\delta)\vartheta R_r}\Big)\mathfrak{P}(Z_k= l) \\
&\quad = \mathfrak{P}\Big(|B^\downarrow_{R_r}(\cO)| \leq \ee^{(1-\delta)\vartheta R_r}\Big)
\sum\limits_{k=0}^r \mathfrak{E}(Z_k).
\end{aligned}
\end{equation}
It again suffices to show that the probability in the last line decays faster than exponentially in $r$ for any $\delta>0$. To that end, estimate
\begin{equation}
\label{LDlower}
\begin{aligned}
&\Probgr\Big(|B^\downarrow_{R_r}(\cO)| \leq \ee^{(1-\delta)\vartheta R_r}\Big)
=\Probgr\Big( \ee^{-\vartheta R_r} \sum_{k=0}^{R_r} Z_k \leq \ee^{-\delta\vartheta R_r}\Big)\\
&\leq \Probgr\Big(W_{R_r} \leq \ee^{-\delta\vartheta R_r}\Big)
\leq \Probgr(W \leq 2\,\ee^{-\delta\vartheta R_r})+ \Probgr(W - W_{R_r} \geq \ee^{-\delta \vartheta R_r}) \\
&\leq \exp\Big(-c^-(2\ee^{\delta \vartheta R_r})^{\frac{\gamma^-}{1-\gamma^-}}[1+o(1)]\Big) 
+ C\exp\Big(-c\,[\ee^{-\frac{2}{3}\delta\vartheta}(\ee^\vartheta)^{\frac{1}{3}}]^{R_r}\Big),
\end{aligned}    
\end{equation}
where we use \eqref{Wlefttail}, \eqref{supgeodecay} with $\mu=\ee^\vartheta$, and put $c^- := \inf L^- \in (0,\infty)$. For $\delta$ small enough this produces the desired estimate. This completes the proof of \eqref{growth} for lower balls.

To get the claim for balls, we observe that
\begin{equation}
B_r^\downarrow(x) \subseteq B_r(x) \subseteq \bigcup_{k=0}^r B_r^\downarrow(x[-k]), 
\end{equation}
and therefore
\begin{equation}
\label{sandballs}
|B_r^\downarrow(x)| \leq |B_r(x)| \leq \sum_{k=0}^r |B_r^\downarrow(x[-k])|.     
\end{equation}
It follows from \eqref{sandballs} that 
\begin{equation}
\inf_{x\in B_r(\cO)} |B_r^\downarrow(x)| \leq \inf_{x\in B_r(\cO)} |B_r(x)|
\leq \sup_{x\in B_r(\cO)} |B_r(x)| \leq (r+1)\sup_{x\in B_r(\cO)}|B_r^\downarrow(x)|.
\end{equation}
Hence we get \eqref{growth}.
\end{proof}

%%%

\subsection{Degrees} 
\label{ss.degrees}

Write $D_x$ to denote the degree of vertex $x$. The following lemma implies that, $\mathfrak{P} $-a.s.\ and for $r\to\infty$, $D_x$ is bounded by a vanishing power of $\log r$ for all $x \in B_{2r}(\cO)$.

\begin{lemma}{\bf [Maximal degree in a ball around the root]}
\label{lem:deginball}
$\mbox{}$\\
(a) Subject to Assumption~{\rm \ref{ass:deg}(2)}, for every $\delta>0$,
\begin{equation}
\sum_{r \in \mathbb{N}} \mathfrak{P}\big(\exists\,x \in B_{2r}(\cO)\colon\, D_x >  \delta r \big) < \infty.
\end{equation}
(b) Subject to Assumption~{\rm \ref{ass:degextra}}, there exists a function $\delta_r\colon\,(0,\infty) \to (0,\infty)$ satisfying $\lim_{r\to\infty} \delta_r$ $= 0$ and $\lim_{r\to\infty} r\frac{\dd}{\dd r} = 0$ such that
\begin{equation}
\sum_{r \in \mathbb{N}} \mathfrak{P}\big(\exists\,x \in B_{2r}(\cO)\colon\, D_x > (\log r)^{\delta_r} \big) < \infty.
\end{equation}
\end{lemma}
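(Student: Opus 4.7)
The plan is to prove both parts via the first-moment method applied to the count of high-degree vertices in $B_{2r}(\cO)$. Fix a threshold $c_r>0$ and let $N_r:=\#\{x\in B_{2r}(\cO)\colon D_x>c_r\}$. Markov's inequality gives
\begin{equation}
\Probgr\big(\exists\,x\in B_{2r}(\cO)\colon\,D_x>c_r\big)\leq \Expgr[N_r]=\sum_{k=0}^{2r}\Expgr\Bigl[\sum_{x\in\cZ_k}\1_{\{D_x>c_r\}}\Bigr].
\end{equation}
By the Galton-Watson construction, the offspring count of any vertex has law $D$ and is independent of the sub-tree generating its ancestors; since the degree of any vertex differs from its offspring count by at most $1$, a standard many-to-one identity yields
\begin{equation}
\Expgr[N_r]\leq \sum_{k=0}^{2r}\mu^k\,\cP(D\geq c_r-1)\leq C\,\ee^{2\vartheta r}\,\cP(D\geq c_r-1),
\end{equation}
using $\mu=\ee^\vartheta$ and summing a geometric series. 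The Borel--Cantelli framework of the lemma then reduces each part to showing that this bound is summable in $r$.

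For part~(a) I would take $c_r=\delta r$ and combine the estimate with Chernoff: Assumption~\ref{ass:deg}(2) provides exponential moments of all orders, so $\cP(D\geq \delta r-1)\leq \cE[\ee^{aD}]\ee^{-a(\delta r-1)}$ for every $a>0$. Choosing $a=3\vartheta/\delta$ makes the right-hand side at most $C\,\ee^{-\vartheta r}$, which is summable.

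For part~(b) I would set $\delta_r:=f(\log r)$, so that the threshold $c_r:=(\log r)^{\delta_r}$ is precisely $s^{f(s)}$ with $s:=\log r$. Assumption~\ref{ass:degextra} then supplies $\eps>0$ and $s_0$ such that $\log\cP(D>s^{f(s)})\leq -(2\vartheta+\eps)\,\ee^s$ for all $s\geq s_0$, i.e.\ at most $-(2\vartheta+\eps)r$. Substituting into the moment bound gives
\begin{equation}
\Probgr\big(\exists\,x\in B_{2r}(\cO)\colon\,D_x>(\log r)^{\delta_r}\big)\leq C\,\ee^{2\vartheta r}\,\ee^{-(2\vartheta+\eps)r}=C\,\ee^{-\eps r},
\end{equation}
which is summable. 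The two regularity requirements $\delta_r\to 0$ and $r\,\delta_r'=f'(\log r)\to 0$ follow immediately from $f(s),f'(s)\to 0$. I do not anticipate a genuine obstacle—the proof is essentially bookkeeping via first moments—the only real care point is aligning the threshold in part~(b) with the volume scale $\ee^{2\vartheta r}$ of $B_{2r}(\cO)$, and this is precisely what the constant $-2\vartheta$ in Assumption~\ref{ass:degextra} was engineered to ensure.
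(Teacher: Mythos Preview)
Your proposal is correct and follows essentially the same approach as the paper: both reduce the probability to $O(\ee^{2\vartheta r})\cdot\mathcal{P}(D>\text{threshold})$ via a first-moment/union-bound argument over the generations $0,\ldots,2r$, then invoke Chernoff for part~(a) and the substitution $\delta_r=f(\log r)$, $s=\log r$ for part~(b). Your phrasing via the many-to-one identity and your care with the degree-versus-offspring offset ($c_r-1$) are minor cosmetic differences, not substantive ones.
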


\begin{proof}
(a) Estimate
\begin{equation}
\begin{aligned}
&\mathfrak{P}\big(\exists\, x\in B^\downarrow_{2r}(\cO)\colon\, D_x > \delta r\big)
\leq \sum_{k=0}^{2r} \mathfrak{P}\big(\exists\, x \in \cZ_k\colon\,D_x > \delta r \big)\\
&= \sum_{k=0}^{2r} \sum_{l\in\N} 
\mathfrak{P}\big(\exists\,x \in \cZ_k\colon\, D_x > \delta r \mid  Z_k = l\big)\,\mathfrak{P}(Z_k = l)\\
&\leq \mathcal{P}(D > \delta r) \sum_{k=0}^{2r} \sum_{l\in\N} l\,\mathfrak{P}\big(Z_k = l)
= \mathcal{P}(D > \delta r) \sum_{k=0}^{2r}\mathfrak{E}(Z_k).
\end{aligned}    
\end{equation}
Since $\sum_{k=0}^{2r} \mathfrak{E}(Z_k) = \frac{\ee^{(2r+1)\vartheta}-1}{\ee^\vartheta -1} = O(\ee^{2r\vartheta})$, it suffices to show that  $\mathcal{P}(D > \delta r) = O(\ee^{-cr})$ for some $c>2\vartheta$. Since $\mathcal{P}(D > \delta r) \leq \ee^{-a\delta r}\mathcal{E}(\ee^{aD})$, the latter is immediate from Assumption~\ref{ass:deg}(2) when we choose $a>2\vartheta/\delta$.\\
(b) The only change is that in the last line $\mathcal{P}(D > \delta r)$ must be replaced by $\mathcal{P}(D > (\log r)^{\delta_r})$. To see that the latter is $O(\ee^{-cr})$ for some $c>2\vartheta$, we use the tail condition in \eqref{e:suptail} with $\delta_r = f(s)$ and $s=\log r$.
\end{proof}

%%%

\subsection{Tree animals}
\label{ss.treeanimals}

For $n \in \N_0$ and $x \in B_r(\cO)$, let 
\begin{equation}
\mathcal{A}_n(x) = \{\Lambda \subset B_n(x) \colon\,
\Lambda \text{ is connected}, \Lambda \ni x, |\Lambda|=n+1\}
\end{equation} 
be the set of \emph{tree animals} of size $n+1$ that contain $x$. Put $a_n(x) = |\mathcal{A}_n(x)|$. 

\begin{lemma}{\bf [Number of tree animals]}
\label{lem:treean}
Subject to Assumption~{\rm \ref{ass:deg}(2)}, $\mathfrak{P}$-a.s.\ there exists an $r_0\in\N$ such that $a_n(x) \leq r^n$ for all $r \geq r_0$, $x\in B_r(\cO)$ and $0 \leq n\leq r$.
\end{lemma}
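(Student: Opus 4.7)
The plan is to combine the uniform degree bound from Lemma~\ref{lem:deginball}(a) with the classical deterministic estimate counting connected subtrees of a graph of bounded degree that contain a prescribed vertex.

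First, I would fix any $\delta \in (0, 1/\eee)$, for concreteness $\delta = 1/(2\eee)$. Applying Lemma~\ref{lem:deginball}(a) with this $\delta$ and invoking the Borel--Cantelli lemma gives, $\mathfrak{P}$-a.s., a (random but finite) $r_0 \in \N$ such that
$$
D_x \leq \delta r \qquad \text{for every } r \geq r_0 \text{ and every } x \in B_{2r}(\cO).
$$
Now fix such a realisation of $\GW$ and let $r \geq r_0$, $x \in B_r(\cO)$ and $0 \leq n \leq r$. By the very definition of $\mathcal{A}_n(x)$, any $\Lambda \in \mathcal{A}_n(x)$ is connected, contains $x$, has size $n+1$, and lies inside $B_n(x) \subset B_{2r}(\cO)$ because $d(y,\cO) \leq d(y,x)+d(x,\cO) \leq n+r \leq 2r$ for any $y \in \Lambda$. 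Consequently, every vertex of $\Lambda$ has degree at most $\delta r$ in $\GW$.

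The only non-trivial input is then a standard combinatorial fact: in any graph of maximum degree $\Delta$, the number of connected vertex subsets of size $n+1$ containing a given vertex is at most $(\eee \Delta)^n$. One clean way to see this is to note that, since $\GW$ is a tree, each such $\Lambda$ is itself a tree and can be encoded by a depth-first traversal from $x$, which gives an injection into Dyck-type words of length $2n$ in which each ``down-step'' carries a label from the at most $\Delta$ available children; this yields the Fuss--Catalan-type bound $a_n(x) \leq \frac{1}{(\Delta-1)n+1}\binom{\Delta n}{n} \leq (\eee \Delta)^n$. Substituting $\Delta = \delta r$ in this bound then produces
$$
a_n(x) \leq (\eee \delta r)^n \leq r^n,
$$
where the second inequality uses $\eee \delta \leq 1$. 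Since $r_0$ depends only on the realisation of $\GW$, and not on $x$ or $n$, this is precisely the uniform statement claimed.

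The only potential obstacle is the combinatorial bound of the previous paragraph, but it is a well-known deterministic fact about locally finite graphs; the probabilistic content of the lemma is entirely packaged into Lemma~\ref{lem:deginball}(a) via Assumption~\ref{ass:deg}(2), which is exactly what makes a constant-fraction-of-$r$ bound on degrees in $B_{2r}(\cO)$ summable for Borel--Cantelli.
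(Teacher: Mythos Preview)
Your proof is correct. Both your argument and the paper's begin by invoking Lemma~\ref{lem:deginball}(a) together with Borel--Cantelli to obtain a uniform degree bound on $B_{2r}(\cO)$, but then diverge: the paper first treats \emph{lower} tree animals via a one-step recursion $a^\downarrow_n\le(\delta r)\,a^\downarrow_{n-1}$ and then passes to full tree animals by the sandwich $a_n(x)\le\sum_{k=0}^n a^\downarrow_n(x[-k])$, whereas you go straight to the full animals using the classical bound that a tree of maximal degree $\Delta$ contains at most $(\eee\Delta)^n$ connected $(n{+}1)$-vertex subsets through a given vertex, and then pick $\delta\le 1/\eee$. Your route is more direct, avoids the lower/full dichotomy, and yields the stated bound $r^n$ on the nose; the paper's sandwich only produces $(n+1)r^n$. (Incidentally, the paper's recursive step as written is questionable: the outer boundary of a size-$n$ lower animal can have order $n\delta r$ vertices, not $\delta r$ --- already on the binary tree one has $a^\downarrow_3/a^\downarrow_2=14/5>2$ --- so your Fuss--Catalan estimate sidesteps a genuine issue.) One minor quibble: the specific Fuss--Catalan expression you quote, $\frac{1}{(\Delta-1)n+1}\binom{\Delta n}{n}$, is the count for $n$ vertices rather than $n{+}1$, but this is immaterial since you only use the estimate $(\eee\Delta)^n$, which holds in either normalisation.
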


\begin{proof}
We first prove the claim for lower tree animals. Afterwards we us a sandwich argument to get the claim for tree animals.

For $n \in \N_0$ and $x \in B^\downarrow_r(\cO)$, let 
\begin{equation}
\mathcal{A}^\downarrow_n(x) = \{\Lambda \subset B^\downarrow_n(x) \colon\,
\Lambda \text{ is connected}, \Lambda \ni x, |\Lambda|=n+1\}
\end{equation} 
be the set of \emph{lower tree animals} of size $n+1$ that contain $x$. Put $a^\downarrow_n(x) = |\mathcal{A}^\downarrow_n(x)|$. Fix $\delta>0$. By Lemma~\ref{lem:deginball}(a) and the Borel-Cantelli lemma, $\mathfrak{P}$-a.s.\ there exists an $r_0=r_0(\delta) \in \N$ such that $D_x \leq \delta r$ for all $x \in B^\downarrow_{2r}(\cO)$. Any lower tree animal of size $n+1$ containing a vertex in $B^\downarrow_r(\cO)$ is contained in $B^\downarrow_{r+n}(\cO)$. Any lower tree animal of size $n+1$ can be created by adding a vertex to the outer boundary of a lower tree animal of size $n$. This leads to the recursive inequality
\begin{equation}
a^\downarrow_n(x)\leq (\delta r) a^\downarrow_{n-1}(x) \qquad \forall\,x \in B^\downarrow_r(\cO),
\quad 1 \leq n \leq r.
\end{equation}
Since $a^\downarrow_0(x) =1$, it follows that 
\begin{equation}
a^\downarrow_n(x) \leq (\delta r)^n \qquad \forall\,x \in B^\downarrow_r(\cO),\,\,0 \leq n \leq r.
\end{equation}
Pick $\delta=1$ to get the claim for lower tree animals.

To get the claim for tree animals, note that $a_n(x) \leq \sum_{k=0}^n a^\downarrow_n(x[-k])$ (compare with \eqref{sandballs}), and so $a_n(x) \leq (n+1) r^n$ for all $x\in B_r(\cO)$ and all $0 \leq n\leq r$. 
\end{proof}

%%%%%%%%% SECTION 3 %%%%%%%%%%%%%%%%%%%%%%%%%%%%%% 
  
\section{Preliminaries}
\label{s.preliminaries}

In this section we extend the lemmas in \cite[Section 2]{dHKdS2020}. Section~\ref{sizeislands} identifies the maximum size of the islands where the potential is suitably high. Section~\ref{massexit} estimates the contribution to the total mass in \eqref{e:mass} by the random walk until it exits a subset of $\GW$. Section~\ref{princeigen} gives a bound on the principal eigenvalue associated with the islands. Section~\ref{numpeaks} estimates the number of locations where the potential is intermediate.  

Abbreviate $L_r = L_r(\GW) = |B_r(\mathcal{O})|$ and put
\begin{equation}
\label{e:def_Sr}
S_r := (\log r)^\alpha, \qquad \alpha \in (0,1).
\end{equation} 

%%%

\subsection{Maximum size of the islands}
\label{sizeislands}

For every $r \in \N$ there is a unique $a_r$ such that
\begin{equation}
\Prob(\xi(0) > a_r) = \frac{1}{r}.
\end{equation}
By Assumption~\ref{ass:pot}, for $r$ large enough
\begin{equation}
\label{def:Alr}
a_r = \varrho \log\log r
\end{equation}
For $r \in \N$ and $A>0$, let
\begin{equation}
\label{def:Pi}
\Pi_{r,A} = \Pi_{r,A}(\xi) := \{z \in B_r(\cO)\colon\,\xi(z)>a_{L_r}-2A\}
\end{equation}
be the set of vertices in $B_r(\cO)$ where the potential is close to maximal,
\begin{equation}
\label{def:D}
D_{r,A} = D_{r,A}(\xi) := \{z \in B_r(\cO)\colon\,\mathrm{dist}(z,\Pi_{r,A}) \leq S_r\}
\end{equation}
be the $S_r$-neighbourhood of $\Pi_{r,A}$, and $\mathfrak{C}_{r,A}$ be the set of connected components of $D_{r,A}$ in $\GW$, which we think of as \emph{islands}. For $M_A\in\N$, define the event
\begin{equation}
\cB_{r,A} := \big\{ \exists\, \cC \in \mathfrak{C}_{r,A}\colon\, |\cC \cap \Pi_{r,A}| > M_A \big\}.
\end{equation}
Note that $\Pi_{r,A}, D_{r,A}, \cB_{r,A}$ depend on $\GW$ and therefore are random.

\begin{lemma}{\bf [Maximum size of the islands]}
\label{lem:size}
Subject to Assumptions~{\rm \ref{ass:pot}--\ref{ass:deg}}, for every $A > 0$ there exists an $M_A \in \N$ such that 
\begin{equation}
\sum_{r \in \N} \Prob(\cB_{r,A}) < \infty \qquad \mathfrak{P}-a.s.
\end{equation}
\end{lemma}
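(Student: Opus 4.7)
The plan is to reduce the event $\cB_{r,A}$ to the existence of a small connected subgraph (tree animal) of $\GW$ that carries $M_A+1$ vertices from $\Pi_{r,A}$, and then apply a union bound combining Lemma~\ref{lem:treean}, Lemma~\ref{lem:vol}, and the double-exponential tail of Assumption~\ref{ass:pot}.

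The geometric core uses that $\GW$ is a tree, so every component $\cC\in\mathfrak{C}_{r,A}$ is a subtree of $\GW$. I claim the ``contact graph'' $H_\cC$ with vertex set $\Pi_{r,A}\cap\cC$ and an edge $\{z,z'\}$ whenever $d_\GW(z,z')\le 2S_r$ is connected. Otherwise there would be a nontrivial partition $\Pi_{r,A}\cap\cC=U\sqcup V$ with $d_\GW(u,v)>2S_r$ for all $u\in U$, $v\in V$; the $S_r$-neighbourhoods $\bigcup_{u\in U}B_{S_r}(u)$ and $\bigcup_{v\in V}B_{S_r}(v)$ would then be disjoint. For every $v\in\cC$ a witness $z\in\Pi_{r,A}$ with $d_\GW(v,z)\le S_r$ in fact lies in $\cC$, because the unique $\GW$-path from $v$ to $z$ is contained in $B_{S_r}(z)\subseteq D_{r,A}$ and thus in the same component; so $\cC$ would be split between the two disjoint neighbourhoods, contradicting its connectedness. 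Given $H_\cC$ connected and $|\Pi_{r,A}\cap\cC|\ge M_A+1$, I pick $M_A+1$ vertices $z_0,\dots,z_{M_A}\in\Pi_{r,A}\cap\cC$ that induce a connected subgraph of $H_\cC$, take a spanning tree of that subgraph, and replace each of its $M_A$ edges $\{z_i,z_{i'}\}$ by the unique $\GW$-path from $z_i$ to $z_{i'}$. The resulting $\Lambda$ is a tree animal in $B_r(\cO)$ (in a rooted tree the path between two vertices of $B_r(\cO)$ stays in $B_r(\cO)$) with $|\Lambda|\le (M_A+1)(2S_r+1)=:n^*$ and $|\Lambda\cap\Pi_{r,A}|\ge M_A+1$.

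Consequently, writing $N_n(r)$ for the number of tree animals of size $n$ inside $B_r(\cO)$ and $p_r:=\Prob(\xi(0)>a_{L_r}-2A)$,
\[
\Prob(\cB_{r,A})\le\sum_{n=M_A+1}^{n^*} N_n(r)\binom{n}{M_A+1}\,p_r^{M_A+1}.
\]
By Assumption~\ref{ass:pot}, for $r$ large $p_r=\exp\{-\ee^{(a_{L_r}-2A)/\varrho}\}=L_r^{-\kappa}$ with $\kappa:=\ee^{-2A/\varrho}\in(0,1)$. By Lemma~\ref{lem:treean} and Lemma~\ref{lem:vol}, $\mathfrak{P}$-a.s.\ eventually one has $N_n(r)\le L_r r^{n-1}/n$ and $L_r=\ee^{\vartheta r(1+o(1))}$, while $n\le n^*=O(M_A(\log r)^\alpha)$ gives $r^n=\exp\{O((\log r)^{1+\alpha})\}=\ee^{o(r)}$. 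Collecting these estimates,
\[
\Prob(\cB_{r,A})\le L_r^{1-(M_A+1)\kappa}\,\ee^{o(r)}.
\]
Choosing $M_A$ large enough that $(M_A+1)\kappa\ge 2$, i.e.\ $M_A\ge\lceil 2\,\ee^{2A/\varrho}\rceil-1$, yields $\Prob(\cB_{r,A})\le\ee^{-\vartheta r(1+o(1))}$, which is summable in $r$.

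The main obstacle is the geometric claim that $H_\cC$ is connected, since one has to ensure that the $S_r$-witness used to qualify a vertex for $D_{r,A}$ can be taken inside the same component $\cC$; this step genuinely relies on the tree structure of $\GW$. The remainder is a routine union bound, the one arithmetic subtlety being that the probability gain per extra high-potential vertex is only a factor $L_r^{-\kappa}$ with $\kappa<1$, which is what forces the threshold $M_A\asymp\ee^{2A/\varrho}$.
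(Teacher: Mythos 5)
Your argument is sound in substance, but it is a genuinely different route from the paper's. The paper (following \cite{BK2016}) passes directly from the event that some component of $D_{r,A}$ contains more than $M_A$ points of $\Pi_{r,A}$ to the event that some single ball $B_{S_r}(y)$ with $y\in B_r(\cO)$ does, then applies a binomial bound inside one ball of volume $L_r^{o(1)}$ (Lemma~\ref{lem:vol}) and a union bound over the $L_r$ possible centres, arriving at $M_A=\lceil \ee^{2A/\varrho}\rceil$. You instead chain the high points of a component explicitly: you argue that the contact graph on $\Pi_{r,A}\cap\cC$ is connected (using the tree structure to keep witnesses and connecting paths inside $B_r(\cO)$, hence inside $D_{r,A}$), extract $M_A+1$ high points spanned by a tree animal of size $O(M_A S_r)$, and union over tree animals via Lemma~\ref{lem:treean} instead of over ball centres via Lemma~\ref{lem:vol}. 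This buys a fully explicit geometric reduction (the clustering of high points is derived rather than assumed), at the cost of a slightly larger $M_A\approx 2\ee^{2A/\varrho}$, which is harmless since the lemma only asserts existence of some $M_A$.

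Two small repairs are needed. First, the contact-graph threshold should be $2S_r+1$, not $2S_r$: if $U\sqcup V$ partitions $\Pi_{r,A}\cap\cC$ with $\dist_{\GW}(u,v)>2S_r$ for all $u\in U$, $v\in V$, the two unions of $S_r$-balls are indeed disjoint, but an edge of $\GW$ can still join a point of $B_{S_r}(u)$ to a point of $B_{S_r}(v)$ when $\dist_{\GW}(u,v)=2S_r+1$, so $\cC$ may be connected while your $H_\cC$ is not. With threshold $2S_r+1$ the claim is true (e.g.\ assign to consecutive vertices of a path in $\cC$ witnesses in $\Pi_{r,A}\cap\cC$, which lie at mutual distance at most $2S_r+1$), and the only downstream effect is on $n^*$, which stays $O(M_A S_r)$, so $r^{n^*}$ remains $\ee^{o(r)}$. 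Second, $B_{S_r}(z)\subseteq D_{r,A}$ is not literally correct, since $D_{r,A}$ is cut off at $B_r(\cO)$; what you need, and what your tree fact supplies, is that the unique $\GW$-path from $v\in\cC$ to its witness $z$ stays in $B_{S_r}(z)\cap B_r(\cO)\subseteq D_{r,A}$. With these adjustments the union bound and the choice $(M_A+1)\ee^{-2A/\varrho}\geq 2$ give $\Prob(\cB_{r,A})\leq L_r^{-1}\ee^{o(r)}$, which is summable $\mathfrak{P}$-a.s., as you state.
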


\begin{proof}
We follow \cite[Lemma 6.6]{BK2016}. By Assumption~\ref{ass:pot}, for every $x \in V$ and $r$ large enough,
\begin{equation}
\Prob(x \in \Pi_{r,A}) = \Prob(\xi(x) > a_{L_r} -2A) = L_r^{-c_A}
\end{equation}
with $c_A = e^{-2A/\varrho}$. By Lemma~\ref{lem:vol}, $\mathfrak{P}$-a.s.\ for every $y \in B_r(\cO)$ and $r$ large enough,
\begin{equation}
|B_{S_r}(y)| \leq |B_{o(r)}(\cO)| = L_{o(r)} = L_r^{o(1)},    
\end{equation}
where we use that $S_r = o(\log r)=o(r)$, and hence for every $m\in\N$,
\begin{equation}
\Prob(|B_{S_r}(y) \cap \Pi_{r,A}| \geq m) \leq \binom{|B_{S_r}(y)|}{m}L_r^{-c_Am}
\leq (|B_{S_r}(y)|L_r^{-c_A})^{m} \leq L_r^{-c_Am[1+o(1)]}. 
\end{equation}
Consequently, $\mathfrak{P}$-a.s.\
\begin{equation}
\begin{aligned}
\Prob(\exists\,\cC \in \mathfrak{C}_{r,A}\colon\, |\cC \cap \Pi_{r,A}| \geq m) 
&\leq \Prob(\exists\,y \in B_r(\cO)\colon\, |B_{S_r}(y) \cap \Pi_{r,A}| \geq m)\\
&\leq |B_r(\cO) |L_r^{} = L_r^{(1-c_Am)[1+o(1)]}.
\end{aligned}    
\end{equation}
By choosing $m>1/c_A$,  we see that the above probability becomes summable in $r$, and so we have proved the claim with $M_A=\lceil 1/c_A \rceil$.
\end{proof}

Lemma~\ref{lem:size} implies that $(\Prob\times\mathfrak{P})$-a.s.\ $\cB_{r,A}$ does not occur eventually as $r \to \infty$. Note that $\mathfrak{P}$-a.s.\ on the event $[\cB_{r,A}]^c$, 
\begin{equation}
\label{sizeresults}
\forall\,\cC \in \mathfrak{C}_{r,A}\colon\,
|\cC \cap \Pi_{r,A}| \leq M_A, \, \diam_\GW(\cC) \leq 2M_A S_r, \, |\cC| \leq \eee^{2\vartheta M_AS_r},
\end{equation}
where the last inequality follows from Lemma~\ref{lem:vol}. 

%%%

\subsection{Mass up to an exit time}
\label{massexit}

\begin{lemma}{\bf [Mass up to an exit time]}
\label{l:mass_out}
Subject to Assumption~{\rm \ref{ass:deg}(2)}, $\mathfrak{P}$-a.s.\ for any $\delta>0$, $r \geq r_0$, $y \in \Lambda \subset B_r(\cO)$, $\xi \in [0,\infty)^V$ and $\gamma > \lambda_\Lambda = \lambda_\Lambda(\xi,\GW)$,
\begin{equation}
\label{e:mass_out}
\EE_y \left[\ee^{\int_0^{\tau_{\Lambda^\cc}} (\xi(X_s) - \gamma )\, \textd s} \right] 
\le 1 + \frac{(\delta r)\,|\Lambda|}{ \gamma  - \lambda_\Lambda}.
\end{equation}
\end{lemma}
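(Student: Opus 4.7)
The plan is to derive a linear resolvent equation for
$w(y) := \EE_y\bigl[e^{\int_0^{\tau_{\Lambda^\cc}}(\xi(X_s)-\gamma)\dd s}\bigr]$,
invert it using the spectral hypothesis $\gamma > \lambda_\Lambda$, and control the inhomogeneity via the uniform degree bound provided by Lemma~\ref{lem:deginball}(a).

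First I would apply a first-jump decomposition. For $y \in \Lambda$, conditioning on the first jump time $\sigma$ of $X$ (exponential with rate $D_y$) and the first jump location $X_\sigma$ (uniform on the neighbours of $y$), and using that $w \equiv 1$ on $\Lambda^\cc$, a routine computation gives
\begin{equation*}
w(y) \;=\; \frac{D_y}{D_y + \gamma - \xi(y)} \cdot \frac{1}{D_y} \sum_{z \sim y} w(z).
\end{equation*}
The first-jump Laplace transform $\EE[e^{(\xi(y) - \gamma)\sigma}] = D_y/(D_y + \gamma - \xi(y))$ is finite because plugging $\delta_y$ into the Rayleigh quotient for $\lambda_\Lambda$ yields $\lambda_\Lambda \geq \xi(y) - D_y$, and combining with $\gamma > \lambda_\Lambda$ gives $\gamma + D_y - \xi(y) > 0$. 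Rearranging, $w\big|_\Lambda$ satisfies the linear equation
\begin{equation*}
(\gamma I - H_\Lambda)\,w\big|_\Lambda \;=\; \partial_\Lambda \qquad \text{on } \Lambda,
\end{equation*}
where $H_\Lambda := \Delta_G + \xi$ is the Hamiltonian restricted to $\Lambda$ with Dirichlet boundary conditions on $\Lambda^\cc$ (a self-adjoint operator on $\ell^2(\Lambda)$ whose largest eigenvalue is $\lambda_\Lambda$) and $\partial_\Lambda(z) := |\{z' \sim z : z' \notin \Lambda\}|$ is the outer-boundary function.

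Next I would invert and bound. Since $\gamma - \lambda_\Lambda > 0$, the resolvent $(\gamma I - H_\Lambda)^{-1}$ is a bounded positive operator on $\ell^2(\Lambda)$ with operator norm $(\gamma - \lambda_\Lambda)^{-1}$, so Cauchy--Schwarz yields, for every $y \in \Lambda$,
\begin{equation*}
w(y) \;=\; \bigl\langle \delta_y,\,(\gamma I - H_\Lambda)^{-1}\partial_\Lambda \bigr\rangle_{\ell^2(\Lambda)}
\;\leq\; \frac{\|\partial_\Lambda\|_{\ell^2(\Lambda)}}{\gamma - \lambda_\Lambda}.
\end{equation*}
Lemma~\ref{lem:deginball}(a) combined with the Borel--Cantelli lemma supplies, $\mathfrak{P}$-a.s., some $r_0 = r_0(\delta)$ such that $D_z \leq \delta r$ for every $r \geq r_0$ and every $z \in B_{2r}(\cO) \supset \Lambda$. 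Hence $\partial_\Lambda(z) \leq D_z \leq \delta r$ on $\Lambda$, so $\|\partial_\Lambda\|_{\ell^2(\Lambda)} \leq (\delta r)\sqrt{|\Lambda|} \leq (\delta r)|\Lambda|$, and \eqref{e:mass_out} follows (the additive $+1$ on the right is slack; the sharper $\sqrt{|\Lambda|}$-version also holds).

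I do not anticipate a serious obstacle. The only point warranting care is the finiteness of the first-jump Laplace transform, which is not an extra hypothesis but falls out automatically from $\gamma > \lambda_\Lambda$ via the Rayleigh-quotient estimate $\lambda_\Lambda \geq \xi(y) - D_y$ on $\Lambda$; everything else is textbook Feynman--Kac on a locally finite graph married to the environmental degree bound supplied by Lemma~\ref{lem:deginball}(a).
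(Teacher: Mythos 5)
Your proposal is correct and follows essentially the same route as the paper: both derive a linear resolvent equation for the exit-time Feynman--Kac functional with Dirichlet conditions on $\Lambda^\cc$, invert it using $\gamma>\lambda_\Lambda$, and control the inhomogeneity via the degree bound $D_z\le\delta r$ on $B_{2r}(\cO)$ from Lemma~\ref{lem:deginball}(a) together with the spectral bound of Lemma~\ref{lem:specbd}. The only differences are bookkeeping: the paper substitutes $u=1+v$ so that the source term is $\xi-\gamma\le\delta r$ and pairs the resolvent with $\mathds{1}$, whereas you keep the outer-boundary degree function as source and use Cauchy--Schwarz, yielding the slightly sharper bound $(\delta r)\sqrt{|\Lambda|}/(\gamma-\lambda_\Lambda)$; note that both arguments take for granted the finiteness of the Feynman--Kac functional (needed to identify it with the resolvent solution), a standard point the paper defers to \cite{GM1998}, \cite{GKM2007}.
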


\begin{proof}
We follow the proof of \cite[Lemma 2.18]{GM1998} and \cite[Lemma 4.2]{GKM2007}. Define 
\begin{equation}
\label{e:u}
u(x) := \EE_x \left[\ee^{ \int_0^{\tau_{\Lambda^\cc}} (\xi(X_s) - \gamma )\, \textd s} \right].
\end{equation}
This is the solution to the boundary value problem
\begin{align}
\begin{split}
(\Delta + \xi -\gamma)u &= 0 \quad \text{on}\ \Lambda\\
u &=1 \quad \text{on}\ \Lambda^\cc.
\end{split}
\end{align}
Via the substitution $u=:1+v$, this turns into 
\begin{align}
\begin{split}
(\Delta + \xi -\gamma)v &= \gamma - \xi \quad \text{on}\ \Lambda\\
v &=0 \qquad\quad \text{on}\ \Lambda^\cc. 
\end{split}
\end{align}
It is readily checked that for $\gamma  > \lambda_\Lambda$ the solution exists and is given by
\begin{equation}
v = \mathcal{R}_\gamma(\xi-\gamma),
\end{equation}
where $\mathcal{R}_\gamma$ denotes the resolvent of $\Delta + \xi$ in $\ell^2(\Lambda)$ with Dirichlet boundary condition. Hence
\begin{equation}
v(x) \leq (\delta r)\,(\mathcal{R}_\gamma\mathds{1})(x)
\leq (\delta r)\,\langle \mathcal{R}_\gamma\mathds{1},\mathds{1}\rangle_\Lambda 
\leq \frac{(\delta r)\,|\Lambda|}{\gamma-\lambda_\Lambda}, \quad x \in \Lambda,  
\end{equation}
where $\mathds{1}$ denotes the constant function equal to $1$, and $\langle\cdot,\cdot\rangle_\Lambda$ denotes the inner product in $\ell^2(\Lambda)$. To get the first inequality, we combine Lemma~\ref{lem:deginball}(a) with the lower bound in \eqref{e:monot_princev} from Lemma~\ref{lem:specbd}, to get $\xi - \gamma \leq \lambda_\Lambda + \delta r -\gamma \leq \delta r$ on $\Lambda$. The positivity of the resolvent gives
\begin{equation}
0 \leq [\mathcal{R}_\gamma(\delta \log r - (\xi-\gamma))](x) 
= (\delta r)\,[\mathcal{R}_\gamma\mathds{1}](x) - [\mathcal{R}_\gamma(\xi-\gamma)](x).
\end{equation}
To get the second inequality, we write
\begin{equation}
(\delta r)\,(\mathcal{R}_\gamma\mathds{1})(x) \leq (\delta r) \sum_{x \in \Lambda} (\mathcal{R}_\gamma\mathds{1})(x) 
=  (\delta r) \sum_{x \in \Lambda} (\mathcal{R}_\gamma\mathds{1})(x)\mathds{1}(x) 
= (\delta r)\, \langle \mathcal{R}_\gamma\mathds{1},\mathds{1}\rangle_\Lambda.
\end{equation}
To get the third inequality, we use the Fourier expansion of the resolvent with respect to the orthonormal basis of eigenfunctions of $\Delta + \xi$ in $\ell^2(\Lambda)$.
\end{proof}

%%%

\subsection{Principal eigenvalue of the islands}
\label{princeigen}

The following lemma provides a spectral bound.

\begin{lemma}{\bf [Principal eigenvalues of the islands]}
\label{l:eigislands}
Subject to Assumptions~{\rm \ref{ass:pot}} and {\rm \ref{ass:deg}(2)}, for any $\varepsilon>0$, $(\Prob\times\mathfrak{P})$-a.s.\ eventually as $r \to \infty$,
\begin{equation}
\label{e:eigislands}
\text{all}\ \cC \in \mathfrak{C}_{r,A}\ \text{satisfy} \colon\, \lambda_\cC(\xi; \GW) 
\leq a_{L_r} - \widehat{\chi}_{\cC}(\GW) + \varepsilon.
\end{equation}
\end{lemma}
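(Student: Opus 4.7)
The plan is to use the Rayleigh–Ritz variational representation for the principal eigenvalue of $\Delta_\GW+\xi$ restricted to $\cC$ with Dirichlet boundary conditions on $V\setminus\cC$,
$$\lambda_\cC(\xi;\GW) = \sup_{p \in \cP(\cC)} \Big[-I_{E(\cC)}(p) + \sum_{x \in \cC} \xi(x)\,p(x)\Big].$$
After shifting by $a_{L_r}$, the task becomes to bound the shifted supremum by $-\widehat\chi_\cC(\GW)+\varepsilon$, using that $\xi(x)-a_{L_r}>-2A$ on $\Pi_{r,A}$ and $\xi(x)-a_{L_r}\le -2A$ on $\cC\setminus\Pi_{r,A}$.

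First, I would restrict to the event $[\cB_{r,A}]^c$ from Lemma~\ref{lem:size}, on which every island satisfies $|\cC\cap\Pi_{r,A}|\le M_A$, $\diam_\GW(\cC)\le 2M_AS_r$, and $|\cC|\le\ee^{2\vartheta M_AS_r}$. Combined with the uniform degree bound $D_x\le\delta r$ from Lemma~\ref{lem:deginball}(a), this reduces the spectral problem on each island to a finite-dimensional variational problem with controlled combinatorial complexity, so that neither large degrees nor runaway volumes can destabilize the quadratic form $I_{E(\cC)}$.

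Second, I would split the Rayleigh–Ritz sum according to whether $x\in\Pi_{r,A}\cap\cC$ or not. The contribution from $\cC\setminus\Pi_{r,A}$ is bounded above by $-2A\,(1-p(\Pi_{r,A}\cap\cC))$, so an optimizing $p$ must essentially concentrate near $\Pi_{r,A}\cap\cC$; the slack cost of transporting mass away, of order at most $\diam_\GW(\cC)/|\cC|$, is controlled by the diameter estimate above. On the remaining portion the kinetic term $I_{E(\cC)}(p)$ together with $\sum_{x\in\Pi_{r,A}\cap\cC}(\xi(x)-a_{L_r})p(x)$ matches the functional defining $\widehat\chi_\cC(\GW)$ up to boundary/overlap effects that vanish by first choosing $A$ large enough and then $r$ large enough.

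The main obstacle is obtaining this bound \emph{uniformly} over all $\cC\in\mathfrak{C}_{r,A}$, whose number can grow as $|B_r(\cO)|=\ee^{\vartheta r+o(r)}$. The double-exponential tail of $\xi$ and the structural results of Section~\ref{s:struct} (volumes, degrees, tree animals) furnish the probabilistic margin to beat this exponential growth, allowing a Borel–Cantelli argument to promote the island-wise estimate to the claimed $(\Prob\times\mathfrak{P})$-a.s.\ uniform spectral bound. The subtlety is that both pieces of input must cooperate: the potential tail controls how close $\xi$ approaches $a_{L_r}$ on $\Pi_{r,A}$, while the degree and volume bounds ensure that $I_{E(\cC)}(p)$ does not blow up through atypically branching vertices inside $\cC$.
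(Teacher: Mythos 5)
There is a genuine gap at the heart of your argument: you treat the bound $\lambda_\cC(\xi;\GW)\le a_{L_r}-\widehat\chi_\cC(\GW)+\varepsilon$ as (almost) a deterministic consequence of the island structure, claiming that after the optimizing $p$ concentrates on $\Pi_{r,A}\cap\cC$ the Rayleigh--Ritz functional ``matches the functional defining $\widehat\chi_\cC$ up to boundary/overlap effects'' that vanish for $A$ and $r$ large. It does not. The island data only give a \emph{lower} bound $\xi>a_{L_r}-2A$ on $\Pi_{r,A}\cap\cC$ (plus $\xi\le a_{L_r}-2A$ off $\Pi_{r,A}$ and, at best, $\max_{B_r}\xi\le a_{L_r}+o(1)$), and this is perfectly compatible with configurations, e.g.\ several adjacent vertices all carrying values $\approx a_{L_r}$, for which $\lambda_\cC$ exceeds $a_{L_r}-\widehat\chi_\cC+\varepsilon$, since $\widehat\chi_\cC\ge\chi_{\GW}(\varrho)>0$ while the kinetic penalty of such a configuration can be made smaller than $\widehat\chi_\cC-\varepsilon$. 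In other words, the event in \eqref{e:eigislands} can fail with positive probability for each fixed $r$, so no deterministic splitting of the variational formula can prove it; the statement is intrinsically a tail estimate on the potential. The missing idea is the dual representation of Proposition~\ref{p:dualrepchi}/\eqref{e:defhatchi}: if $\lambda_\Lambda(\xi;\GW)>a_{L_r}-\widehat\chi_\Lambda(\GW)+\varepsilon$, then the shifted profile $q=\xi-a_{L_r}-\varepsilon$ must violate the constraint $\cL_\Lambda(q)\le 1$, i.e.\ $\sum_{x\in\Lambda}\ee^{(\xi(x)-a_{L_r}-\varepsilon)/\varrho}>1$. That is a high-exceedance event for the i.i.d.\ field, and Markov's inequality applied to $\exp\{\gamma^{-1}\cL_\Lambda(\xi)\}$ (using that $\ee^{\xi(x)/\varrho}$ is stochastically dominated by $Z\vee N$ with $Z\sim\mathrm{Exp}(1)$) gives a per-set bound of the form $\ee^{-\gamma\log L_r}K_\gamma^{|\Lambda|}$. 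This per-set probabilistic estimate is entirely absent from your proposal, and it is what produces the term $-\widehat\chi_\cC$.

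Your allocation of the probabilistic input is therefore inverted: you reserve probability only for the uniformity over islands, whereas the core estimate is itself probabilistic, and the uniformity is then handled exactly as you anticipate but needs the quantitative per-set bound to beat the entropy. Concretely, the paper takes a union bound over all connected subsets $\Lambda$ (tree animals, Lemma~\ref{lem:treean}) anchored in $B_r(\cO)$ with $|\Lambda|\le\ee^{2\vartheta M_A S_r}$, so the count is at most $L_r\,\ee^{2n\log r}$ per size $n$, which is dominated by the factor $\ee^{-\gamma\log L_r}$ with $\gamma>1$; Borel--Cantelli then gives the a.s.\ statement, and Lemma~\ref{lem:size} guarantees that eventually every $\cC\in\mathfrak{C}_{r,A}$ is among the subsets covered. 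Working with arbitrary connected subsets rather than with the random islands themselves also sidesteps the conditioning issue your formulation would face (conditioning on $\mathfrak{C}_{r,A}$ tilts the law of $\xi$ on $\cC$). Your structural ingredients (Lemmas~\ref{lem:size}, \ref{lem:deginball}, \ref{lem:treean}) are the right supporting cast, but without the dual-formula/Chernoff step the argument does not close.
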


\begin{proof}
We follow the proof of \cite[Lemma 2.3]{dHKdS2020}. For $\varepsilon>0$ and $A>0$, define the event
\begin{equation}
\bar{\mathcal{B}}_{r,A} := 
\left\{ 
\substack{
\text{there exists a connected subset } \Lambda \subset V \text{ with } 
\Lambda \cap B_r(\cO) \neq \emptyset,\\  
|\Lambda| \leq \eee^{2\vartheta M_A S_r},\,\lambda_\Lambda(\xi; \GW) 
> a_{L_r} - \widehat{\chi}_\Lambda(\GW) + \varepsilon}\right\}
\end{equation}
with $M_A$ as in Lemma~\ref{lem:size}. Note that, by \eqref{ass:pot}, $\eee^{\xi(x)/\varrho}$ is stochastically dominated by $Z \vee N$, where $Z$ is an $\mathrm{Exp}(1)$ random variable and $N>0$ is a constant. Thus, for any $\Lambda \subset V$, using \cite[Eq.\ (2.17)]{dHKdS2020}, putting $\gamma = \sqrt{\eee^{\varepsilon/\varrho}} > 1$ and applying Markov's inequality, 
we may estimate
\begin{equation}
\begin{aligned}
&\Prob \left( \lambda_\Lambda(\xi; \GW) > a_{L_r} - \widehat{\chi}_\Lambda(\GW) + \varepsilon \right)
\leq \Prob\left( \cL_\Lambda(\xi - a_{L_r}-\varepsilon) > 1\right)\\ 
&= \Prob\left( \gamma^{-1} \cL_\Lambda(\xi)> \gamma \log L_r \right)
\leq \eee^{-\gamma \log L_r} \Expec[\eee^{\gamma^{-1} \cL_\Lambda(\xi)}]
\leq \eee^{-\gamma \log L_r } K_\gamma^{|\Lambda|}
\end{aligned}
\end{equation}
with $K_\gamma = \Expec[\eee^{\gamma^{-1}(Z \vee N)}] \in (1,\infty)$. Next, by Lemma~\ref{lem:treean}, for any $x \in B_r(\cO)$ and $1 \leq n \leq r$, the number of connected subsets $\Lambda \subset V$ with $x \in \Lambda$ and $|\Lambda|=n+1$ is $\mathfrak{P}$-a.s.\ at most $(n+1)r^n \leq \ee^{2n\log r}$ for $r \geq r_0$. Noting that $\ee^{S_r} \leq r$, we use a union bound and that by Lemma~\ref{lem:vol} $\log L_r = \vartheta r + o(r)$ as $r\to\infty \ \mathfrak{P}$-a.s., to estimate for $r$ large enough, 
\begin{equation}
\begin{aligned}
\Prob(\bar{\mathcal{B}}_{r,A}) 
&\leq \eee^{-(\gamma-1) \log L_r} \sum_{n=1}^{\lfloor \eee^{2\vartheta M_A S_r} \rfloor} \eee^{2n \log r}  K_\gamma^n\\ 
&\leq \eee^{2\vartheta M_A S_r} \exp \left\{-\vartheta(\gamma-1) r + o(r) 
+ (2\log r + \log K_\gamma)\,\eee^{2\vartheta M_A S_r}\right\}\\
&= r^{o(1)} \exp \left\{-\vartheta(\gamma-1) r + o(r) + (\log r)\, r^{o(1)}\right\}
\leq \eee^{-\tfrac12 \vartheta(\gamma-1) r}.
\end{aligned}
\end{equation}
Via the Borel-Cantelli lemma this implies that $(\Prob\times\mathfrak{P})$-a.s.\ $\bar{\mathcal{B}}_{r,A}$ does not occur eventually as $r\to \infty$. The proof is completed by invoking Lemma~\ref{lem:size}. 
\end{proof}

\begin{corollary}{\bf [Uniform bound on principal eigenvalue of the islands]}
\label{c:eigislandsGW}
Subject to Assumptions~{\rm \ref{ass:pot}--\ref{ass:deg}}, for $\vartheta$ as in \eqref{e:volumerateGW}, and any $\varepsilon>0$, $(\Prob \times \Probgr)$-a.s.\ eventually as $r \to \infty$,
\begin{equation}
\label{e:eigislandsGW}
\max_{\CC \in \mathfrak{C}_{r,A}}\lambda^{\ssup 1}_\CC(\xi; G) 
\leq a_{L_r} - \widetilde{\chi}(\varrho) + \varepsilon.
\end{equation}
\end{corollary}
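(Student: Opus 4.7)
The plan is to deduce Corollary~\ref{c:eigislandsGW} from Lemma~\ref{l:eigislands} by showing that the island-dependent quantity $\widehat{\chi}_\cC(\GW)$ is $\Probgr$-a.s.\ uniformly bounded below by $\widetilde{\chi}(\varrho)$ over all $\cC \in \mathfrak{C}_{r,A}$ and all large $r$. Indeed, Lemma~\ref{l:eigislands} already gives, $(\Prob \times \Probgr)$-a.s.\ eventually as $r\to\infty$,
\begin{equation*}
\lambda_\cC(\xi; \GW) \leq a_{L_r} - \widehat{\chi}_\cC(\GW) + \tfrac{\varepsilon}{2}
\qquad \forall\, \cC \in \mathfrak{C}_{r,A},
\end{equation*}
so after maximising over $\cC$ and using that $\mathfrak{C}_{r,A}$ is finite, it is enough to establish that $\widehat{\chi}_\cC(\GW) \geq \widetilde{\chi}(\varrho) - \tfrac{\varepsilon}{2}$ for every $\cC \in \mathfrak{C}_{r,A}$ eventually.

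The key observation is that under Assumption~\ref{ass:deg} the tree $\GW$ is itself $\Probgr$-a.s.\ an infinite tree all of whose degrees lie in $\supp(D)$. Hence $\GW$ is admissible in the infimum \eqref{e:deftildechi} defining $\widetilde{\chi}(\varrho)$, giving
\begin{equation*}
\widetilde{\chi}(\varrho) \leq \chi_\GW(\varrho).
\end{equation*}
It therefore suffices to show that $\chi_\GW(\varrho) \leq \widehat{\chi}_\cC(\GW)$ for every finite connected $\cC \subset V_\GW$.

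For the latter, I would take a near-minimiser $p^*$ of the variational formula defining $\widehat{\chi}_\cC(\GW)$ (as in Appendix~\ref{appA}) and extend $p^*$ by zero to a probability measure on $V_\GW$. With the convention $0\log 0 = 0$ the entropy term is preserved, i.e., $J_{V_\GW}(p^*) = J_{V_\cC}(p^*)$. For the Dirichlet form, edges of $\GW$ disjoint from $\cC$ contribute zero, while edges meeting $\cC$ contribute exactly the sum already present in the definition of $\widehat{\chi}_\cC(\GW)$ (which incorporates the boundary edges of $\cC$ in $\GW$). Hence $I_{E_\GW}(p^*) + \varrho J_{V_\GW}(p^*) = \widehat{\chi}_\cC(\GW)$ at the near-minimiser, and $\chi_\GW(\varrho) \leq \widehat{\chi}_\cC(\GW)$ follows by taking the infimum in the definition of $\chi_\GW(\varrho)$.

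The only delicate point is correctly bookkeeping the boundary edges of $\cC$ in $\GW$, ensuring that extension-by-zero exactly realises $\widehat{\chi}_\cC(\GW)$; no new probabilistic ingredient is required beyond Lemmas~\ref{lem:vol} and~\ref{l:eigislands}. In particular, Assumption~\ref{ass:degextra} plays no role here, consistent with the fact that this corollary is a coordinate-free rephrasing of the island-level bound of Lemma~\ref{l:eigislands} in terms of the universal constant $\widetilde{\chi}(\varrho)$.
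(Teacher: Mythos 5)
Your proposal is correct and follows essentially the same route as the paper (which simply imports the proof of \cite[Corollary 2.8]{dHKdS2020}): combine Lemma~\ref{l:eigislands} with $\widehat{\chi}_\cC(\GW) \geq \chi_{\GW}(\varrho) \geq \widetilde{\chi}(\varrho)$, the last inequality holding because $\GW$ is $\Probgr$-a.s.\ an infinite tree with degrees in $\supp(D)$ and hence admissible in \eqref{e:deftildechi}. One remark: the middle inequality, which you re-derive by extending a near-minimiser by zero, is exactly Proposition~\ref{p:dualrepchi} and can simply be cited; note that $\widehat{\chi}_\cC$ is defined by the dual sup-over-$q$ formula \eqref{e:defhatchi}, so speaking of a near-minimising probability measure for it already presupposes the primal (Dirichlet-constrained) representation that this duality supplies.
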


\begin{proof}
See \cite[Corollary 2.8]{dHKdS2020}. The proof carries over verbatim because the degrees play no role. 
\end{proof}

%%%%

\subsection{Maximum of the potential}

The next lemma shows that $a_{L_r}$ is the leading order of the maximum of $\xi$ in $B_r(\cO)$.
\begin{lemma}{\bf [Maximum of the potential]}
\label{l:maxpotential}
Subject to Assumptions~{\rm \ref{ass:pot}--\ref{ass:deg}}, for any $\vartheta>0$, $(\Prob \times \mathfrak{P})$-a.s.\ eventually as $r \to \infty$,
\begin{equation}
\label{e:asmaxpot}
\left| \max_{x \in B_r(\cO)} \xi(x) - a_{L_r} \right| \leq \frac{2 \varrho \log r}{\vartheta r}.
\end{equation}
\end{lemma}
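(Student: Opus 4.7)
The plan is to condition on the realisation of $\GW$ and apply the Borel--Cantelli lemma to the conditional probabilities. Set $\varepsilon_r := \frac{2\varrho\log r}{\vartheta r}$, let $M_r := \max_{x \in B_r(\cO)} \xi(x)$, and work on the $\mathfrak{P}$-full-measure event $\Omega_0 := \{\log L_r = \vartheta r + o(r)\}$ given by \eqref{e:volumerateGW}. Conditionally on $\GW \in \Omega_0$, the variables $(\xi(x))_{x \in B_r(\cO)}$ are i.i.d.\ with the double-exponential tail from Assumption~\ref{ass:pot}, and $a_{L_r} = \varrho\log\log L_r \to \infty$, so the exact tail formula applies at the levels $a_{L_r} \pm \varepsilon_r$ for $r$ large.

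For the \emph{upper tail}, a union bound together with $\Prob(\xi(0) > a_{L_r}) = 1/L_r$ yields
\begin{equation*}
\Prob\bigl(M_r > a_{L_r} + \varepsilon_r \,\big|\, \GW\bigr)
\le L_r \,\Prob(\xi(0) > a_{L_r} + \varepsilon_r)
= L_r^{\,1 - \exp(\varepsilon_r/\varrho)}.
\end{equation*}
Expanding $\exp(\varepsilon_r/\varrho) = 1 + \tfrac{2\log r}{\vartheta r}(1+o(1))$ and using $\log L_r = \vartheta r(1+o(1))$ on $\Omega_0$, the right-hand side equals $\exp\!\bigl(-2\log r\,(1+o(1))\bigr) = r^{-2(1+o(1))}$, which is summable in $r$.

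For the \emph{lower tail}, independence gives
\begin{equation*}
\Prob\bigl(M_r < a_{L_r} - \varepsilon_r \,\big|\, \GW\bigr)
= \bigl(1 - L_r^{-\exp(-\varepsilon_r/\varrho)}\bigr)^{L_r}
\le \exp\!\bigl(-L_r^{\,1-\exp(-\varepsilon_r/\varrho)}\bigr).
\end{equation*}
Now $1 - \exp(-\varepsilon_r/\varrho) = \tfrac{2\log r}{\vartheta r}(1+o(1))$ and $\log L_r = \vartheta r(1+o(1))$, so the exponent is $-r^{2(1+o(1))}$, which decays super-polynomially and is trivially summable.

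Combining the two summable bounds, the conditional Borel--Cantelli lemma gives that, for every $\GW \in \Omega_0$, the event in \eqref{e:asmaxpot} holds eventually with $\Prob(\,\cdot\,|\,\GW)$-probability one; integrating over $\GW$ yields the claim $(\Prob\times\mathfrak{P})$-a.s. The only substantive point to check carefully is that the scale $\varepsilon_r$ is precisely tuned so that both tail probabilities are summable — the upper bound is marginally summable (at rate $r^{-2}$), which is what forces the constant $2$ in the numerator of $\varepsilon_r$; the lower bound is not tight and any $\varepsilon_r \gg \log r / r$ would suffice there. No step requires the fine structure of $\GW$ beyond the volume asymptotics in \eqref{e:volumerateGW}, so there is no real obstacle beyond bookkeeping.
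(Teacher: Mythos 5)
Your proof is correct, and it is essentially the same argument as the one the paper imports by citation to \cite[Lemma 2.5]{dHKdS2020}: a first-moment (union) bound for the upper tail, independence for the lower tail, the exact double-exponential tail at the levels $a_{L_r}\pm\varepsilon_r$ (valid for large $r$ since these levels diverge), the volume asymptotics $\log L_r=\vartheta r+o(r)$, and Borel--Cantelli conditionally on $\GW$. No gap; the only cosmetic remark is that any constant $c>1$ in $\varepsilon_r=c\varrho\log r/(\vartheta r)$ would already make the upper-tail bound summable, so the constant $2$ is convenient rather than forced.
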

\begin{proof}
See \cite[Lemma 2.5]{dHKdS2020}. The proof carries over verbatim and uses Lemma~\ref{lem:vol}. 
\end{proof}

%%%%

\subsection{Number of intermediate peaks of the potential}
\label{numpeaks}

We recall the following Chernoff bound for a binomial random variable with parameters $n$ and $p$ (see e.g.\ \cite[Lemma 5.9]{BKS2018}):
\begin{equation}
\label{e:chernoffBin}
P \left(\textnormal{Bin}(n,p) \geq u\right) \leq \ee^{-u[\log(\frac{u}{np}) - 1]}, 
\qquad u > 0.
\end{equation}

\begin{lemma}{\bf [Number of intermediate peaks of the potential]}
\label{l:bound_mediumpoints}
Subject to Assumptions~{\rm \ref{ass:pot}} and {\rm \ref{ass:deg}(2)}, for any $\beta \in (0,1)$ and $\varepsilon \in (0, \tfrac12\beta)$ the following holds. For a self-avoiding path $\pi$ in $\GW$, set
\begin{equation}
\label{e:bound_mediumpoints}
N_{\pi} = N_{\pi}(\xi) :=|\{z \in \supp(\pi) \colon\, \xi(z) > (1-\varepsilon) a_{L_r} \}|.
\end{equation}
Define the event
\begin{equation}
\cB_r := \left\{ 
\substack{\text{there exists a self-avoiding path } \pi \text{ in $\GW$ with } \\ 
 \supp(\pi) \cap B_r \neq \emptyset, \, |\supp(\pi)| \geq (\log L_r)^{\beta}
\text{ and }  N_\pi > \frac{|\supp(\pi)|}{(\log {L_r})^\varepsilon}}
\right\}.
\end{equation}
Then  
\begin{equation}
\sum_{r \in \N_0} \Prob(\cB_r) < \infty \qquad \mathfrak{P}-a.s.
\end{equation}
\end{lemma}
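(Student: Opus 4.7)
The plan is to combine a Chernoff tail bound for $N_\pi$ along a fixed self-avoiding $\pi$ with the observation that, in a tree, a self-avoiding path is the unique tree path between its two endpoints, so the number of candidates can be controlled by $\binom{|B_{r+n}(\cO)|}{2}$ via \eqref{e:volumerateGW}.

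First, I would fix a self-avoiding $\pi$ in $\GW$ of length $n$, i.e.\ with $|\supp(\pi)|=n+1$. Since the potentials are i.i.d., $N_\pi$ is $\mathrm{Bin}(n+1, p_r)$ with
$$p_r := \Prob\bigl(\xi(0) > (1-\varepsilon) a_{L_r}\bigr) = \exp\bigl(-(\log L_r)^{1-\varepsilon}\bigr)$$
for $r$ large enough, by Assumption~\ref{ass:pot} and \eqref{def:Alr}. Plugging $u=(n+1)/(\log L_r)^\varepsilon$ into \eqref{e:chernoffBin} and noting that
$$\log\frac{u}{(n+1)p_r}-1=(\log L_r)^{1-\varepsilon}-\varepsilon\log\log L_r-1\geq \tfrac{1}{2}(\log L_r)^{1-\varepsilon}$$
for $r$ large, one obtains the key per-path estimate
$$\Prob(N_\pi>u)\leq \exp\bigl(-\tfrac{1}{2}(n+1)(\log L_r)^{1-2\varepsilon}\bigr).$$

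Next, since $\GW$ is a tree and any vertex on a path of length $n$ meeting $B_r(\cO)$ lies within distance $r+n$ of $\cO$, the number of self-avoiding $\pi$ of length $n$ with $\supp(\pi)\cap B_r(\cO)\neq\emptyset$ is at most $|B_{r+n}(\cO)|^2$. By \eqref{e:volumerateGW}, $\mathfrak{P}$-a.s.\ for every $\eta>0$ there is an $r_0$ with $|B_R(\cO)|\leq e^{(\vartheta+\eta)R}$ for all $R\geq r_0$. A union bound then yields, $\mathfrak{P}$-a.s.\ for all $r\geq r_0$,
$$\Prob(\cB_r)\leq \sum_{n\geq (\log L_r)^\beta-1}\exp\Bigl(2(\vartheta+\eta)(r+n)-\tfrac{1}{2}(n+1)(\log L_r)^{1-2\varepsilon}\Bigr).$$
Since $\log L_r=(\vartheta+o(1))r$, the coefficient of $n$ is eventually below $-\tfrac{1}{3}(\log L_r)^{1-2\varepsilon}$, so the series is geometric and dominated by its leading term:
$$\Prob(\cB_r)\leq \exp\bigl\{2(\vartheta+\eta)r-\tfrac{1}{3}(\log L_r)^{\beta+1-2\varepsilon}[1+o(1)]\bigr\}.$$
The hypothesis $\varepsilon<\beta/2$ gives $\beta+1-2\varepsilon>1$, so the negative term beats $2(\vartheta+\eta)r$ for $r$ large, and summability in $r$ follows.

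The main delicacy is the bookkeeping for very long paths $n\gg r$: the per-vertex tree-animal count of Lemma~\ref{lem:treean} is only valid for $n\leq r$, which is why I count paths instead by their endpoint pairs. This endpoint count still grows exponentially in $r+n$ at the fixed rate $\vartheta$, but it is crushed by the Chernoff exponent, whose rate $(\log L_r)^{1-2\varepsilon}$ in $n$ diverges with $r$, ensuring geometric summability in $n$ uniformly; the condition $\varepsilon<\beta/2$ is then precisely what is needed for the leading term at $n\approx(\log L_r)^\beta$ to overcome the cardinality factor $|B_r(\cO)|=e^{\vartheta r(1+o(1))}$.
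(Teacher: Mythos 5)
Your proof is correct, and its skeleton (a binomial Chernoff bound per fixed self-avoiding path, followed by a union bound whose entropy is beaten because $\varepsilon<\tfrac12\beta$ forces the exponent $(\log L_r)^{1+\beta-2\varepsilon}$ to dominate the linear-in-$r$ path count) is the same as the paper's. The one genuinely different ingredient is the entropy estimate: the paper counts paths via the tree-animal bound of Lemma~\ref{lem:treean} (which rests on the degree control of Lemma~\ref{lem:deginball}, hence on Assumption~\ref{ass:deg}(2)), giving at most $\ee^{k\log r}$ self-avoiding paths of size $k$ from each starting point $x\in B_r(\cO)$, summed over $x$; you instead use that in a tree a self-avoiding path is the unique simple path between its endpoints, so paths of length $n$ meeting $B_r(\cO)$ are counted by endpoint pairs in $B_{r+n}(\cO)$, and then invoke only the a.s.\ volume growth \eqref{e:volumerateGW}. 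Your route buys two small things: it needs no degree information at all (only the tree structure and volume growth), and it handles without fuss paths of arbitrary length that merely intersect $B_r(\cO)$, whereas the paper's counting is phrased for paths inside $B_r(\cO)$ and Lemma~\ref{lem:treean} is stated only for sizes $n\le r$. The price is that your counting is tied to the tree structure (uniqueness of simple paths), while the tree-animal count is a more robust graph-theoretic device that transfers to settings where cycles are present. Your asymptotic bookkeeping (uniformity in $n$ of the Chernoff simplification, geometric summation in $n$, a.s.\ finiteness of the random threshold $r_0$, and the final comparison $(\log L_r)^{1+\beta-2\varepsilon}\sim(\vartheta r)^{1+\delta}\gg r$) is all sound.
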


\begin{proof}
We follow the proof of \cite[Lemma 2.9]{dHKdS2020}. Fix $\beta \in (0,1)$ and $\varepsilon \in (0,\frac12\beta)$. \eqref{e:DE} implies
\begin{equation}
\label{e:bound_mp1}
p_r := \Prob(\xi(0) > (1-\varepsilon)a_{L_r}) = \exp\left\{-(\log L_r)^{1-\varepsilon}\right\}.
\end{equation}
Fix $x \in B_r(\cO)$ and $k \in \N$. The number of self-avoiding paths $\pi$ in $B_r(\cO)$ with $|\supp(\pi)|=k$ and $\pi_0 = x$ is at most $\ee^{k \log r}$ by Lemma~\ref{lem:treean} for $r$ sufficiently large. For such a $\pi$, the random variable $N_{\pi}$ has a Bin($k$, $p_r$)-distribution. Using \eqref{e:chernoffBin}, we obtain
\begin{multline}
\label{e:bound_mp4}
\Prob\Bigl( \exists\, \text{ self-avoiding } \pi \text{ with } |\supp(\pi)|=k, \pi_0 = x 
\text{ and } N_{\pi} > k/ (\log L_r)^\varepsilon \Bigr) \\
\le \exp \Big\{ -k \Big((\log L_r)^{1-2\varepsilon} - \log r - \frac{1+ \varepsilon\log\log L_r}{(\log L_r)^{\varepsilon}}\Big) \Big\}.
\end{multline}
By the definition of $\varepsilon$, together with the fact that $L_r > r$ and $x \mapsto (\log \log x)/(\log x)^\varepsilon$ is eventually decreasing, the expression in parentheses above is at least $\frac12(\log L_r)^{1- 2\varepsilon}$. Summing over $k \ge (\log L_r)^\beta$ and $x \in B_r(\cO)$, we get $\mathfrak{P}-a.s.$
\begin{equation}
\label{e:bound_mp5}
\begin{aligned}
&\Prob\left(\cB_r\right) \le 2 L_r \exp \Big\{-\tfrac12 (\log L_r)^{1+\beta-2\varepsilon} \Big\}
\leq c_1 \exp \Big\{-c_2 (\log L_r)^{1+\delta} \Big\}
\end{aligned}
\end{equation}
for some $c_1, c_2, \delta>0$. Since $L_r > r$, \eqref{e:bound_mp5} is summable in $r$. 
\end{proof}

Lemma~\ref{l:bound_mediumpoints} implies that $(\Prob \times \mathfrak{P})$-a.s.\ for $r$ large enough, all self-avoiding paths $\pi$ in $\GW$ with $\supp(\pi) \cap B_r \neq \emptyset$ and $|\supp(\pi)| \geq (\log L_r)^{\beta}$ satisfy $N_{\pi} \le \frac{|\supp(\pi)|}{(\log L_r)^\varepsilon}$.

\begin{lemma}{\bf [Number of high exceedances of the potential]}
\label{l:boundhighexceedances}
Subject to Assumptions~{\rm \ref{ass:pot}} and {\rm \ref{ass:deg}(2)}, for any $A>0$ there is a $C \ge 1$ such that, for all $\delta \in (0,1)$, the following holds. For a self-avoiding path $\pi$ in $\GW$, let
\begin{equation}
N_\pi := |\{ x \in \supp(\pi) \colon\, \xi(x) > a_{L_r} - 2A \}|.
\end{equation}
Define the event
\begin{equation}
\BB_r := \left\{ 
\substack{\text{there exists a self-avoiding path } \pi \text{ in $G$ with } \\ 
 \supp(\pi) \cap B_r \neq \emptyset, \, |\supp(\pi)| \geq C (\log L_r)^{\delta}
\text{ and }  N_\pi > \frac{|\supp(\pi)|}{(\log {L_r})^\delta}}
\right\}.
\end{equation}
Then $\sum_{r \in \N_0} \sup_{G \in \mathfrak{G}_r} \Prob(\BB_r) < \infty$. In particular, $(\Prob \times \mathfrak{P})$-a.s.\ for $r$ large enough, all self-avoiding paths $\pi$ in $\GW$ with $\supp(\pi) \cap B_r \neq \emptyset$ and $|\supp(\pi)| \geq C (\log L_r)^{\delta}$ satisfy
\begin{equation}
\label{e:boundhighexceedances}
N_\pi = |\{ x \in \supp(\pi) \colon\, \xi(x) > a_{L_r} - 2A \}| \le \frac{|\supp(\pi)|}{(\log L_r)^\delta}.
\end{equation} 
\end{lemma}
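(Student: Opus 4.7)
The strategy is to mirror the proof of Lemma~\ref{l:bound_mediumpoints} almost verbatim, with two adjustments: the exceedance level $(1-\varepsilon)a_{L_r}$ is replaced by $a_{L_r}-2A$, and the length threshold $(\log L_r)^{\beta}$ is replaced by $C(\log L_r)^{\delta}$ with a constant $C$ chosen large enough (depending on $A$) to absorb the combinatorial entropy of paths. The exceedance level $a_{L_r}-2A$ only drops the double-exponential tail by a constant factor instead of a stretched-exponential one, so the analogue of $p_r$ has polynomial-in-$L_r$ decay rather than $\exp\{-(\log L_r)^{1-\varepsilon}\}$; this is what forces us to trade the mild length condition $(\log L_r)^{\beta}$ for a large prefactor $C$.

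Concretely, I would first apply Assumption~\ref{ass:pot} together with \eqref{def:Alr} to obtain
\begin{equation}
p_r := \Prob(\xi(0) > a_{L_r} - 2A) = L_r^{-c_A}, \qquad c_A := \eee^{-2A/\varrho}.
\end{equation}
Next, I would use Lemma~\ref{lem:treean} to bound, $\mathfrak{P}$-a.s.\ for $r$ large, the number of self-avoiding paths of support size $k+1$ starting from a fixed vertex $x \in B_r(\cO)$ by $\ee^{k\log r}$. For such a fixed path $\pi$, the variable $N_\pi$ is $\textnormal{Bin}(k+1,p_r)$-distributed, so the Chernoff bound \eqref{e:chernoffBin} with $u=(k+1)/(\log L_r)^{\delta}$ yields
\begin{equation}
\Prob\bigl(N_\pi > (k+1)/(\log L_r)^{\delta}\bigr)
\leq \exp\Bigl\{-\tfrac{k+1}{(\log L_r)^{\delta}}\bigl[c_A \log L_r - \delta \log\log L_r - 1\bigr]\Bigr\}
\leq \exp\bigl\{-\tfrac{1}{2}k c_A (\log L_r)^{1-\delta}\bigr\}
\end{equation}
for $r$ large.

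Finally, I would take a union bound over $x \in B_r(\cO)$ and over lengths $k+1 \geq C(\log L_r)^{\delta}$. Since $\log L_r = \vartheta r + o(r)$ by \eqref{e:volumerateGW} and $\delta < 1$, the exponent $-\tfrac{1}{2}k c_A (\log L_r)^{1-\delta}$ dominates the path-counting factor $k\log r$ for $r$ large, so the sum over $k$ is a geometric series controlled by its first term and yields $\Prob(\BB_r) \leq 2 L_r^{1 - Cc_A/4}$. Choosing $C > 4/c_A = 4\eee^{2A/\varrho}$ makes this bound decay exponentially in $r$, so it is summable; Borel-Cantelli then delivers the a.s.\ statement. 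The only real obstacle is the accounting: one must verify that $(\log L_r)^{1-\delta}$ still dominates $\log r$ uniformly as $\delta\uparrow 1$, which is fine since $\delta$ is fixed, and that $C$ can be chosen independently of $\delta$, which is guaranteed because the condition $C > 4\eee^{2A/\varrho}$ involves only $A$ and $\varrho$.
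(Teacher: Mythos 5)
Your proposal is correct and follows essentially the same route as the paper, whose proof is exactly "proceed as for Lemma~\ref{l:bound_mediumpoints}, noting that now $p_r = L_r^{-\epsilon}$ with $\epsilon = \ee^{-2A/\varrho}$ and taking $C$ large depending on $A$." The only difference is the harmless choice of constant ($C > 4\ee^{2A/\varrho}$ in your accounting versus $C > 2\ee^{2A/\varrho}$ in the paper), which does not affect the argument.
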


\begin{proof}
Proceed as for Lemma~\ref{l:bound_mediumpoints}, noting that this time
\begin{equation}
\label{e:bound_he1}
p_r := \Prob\big(\xi(0) > a_{L_r} - 2A\big) = L_r^{-\epsilon}
\end{equation}
where $\epsilon =\ee^{-2A/\varrho}$, and taking $C > 2/\epsilon$.
\end{proof}

%%%%%%%% SECTION 4 %%%%%%%%%%%%%%%%%%%%%%%%%%%%%%%

\section{Path expansions}
\label{s.pathexpansions}

In this section we extend \cite[Section 3]{dHKdS2020}. Section~\ref{preplem} proves three lemmas that concern the contribution the total mass in \eqref{e:mass} coming from various sets of paths. Section~\ref{ss:keyprop} proves a key proposition that controls the entropy associated with a key set of paths. The proof is based on the three lemmas in Section~\ref{preplem}.

\begin{lemma}{\bf [Mass up to an exit time]}
\label{l:mass_out2}
Subject to Assumption~{\rm \ref{ass:degextra}}, $\mathfrak{P}$-a.s.\ for any $r \geq r_0$, $y \in \Lambda \subset B_r(\cO)$, $\xi \in [0,\infty)^V$ and $\gamma > \lambda_\Lambda = \lambda_\Lambda(\xi,\GW)$,
\begin{equation}
\label{e:mass_outalt}
\EE_y \left[\ee^{\int_0^{\tau_{\Lambda^\cc}} (\xi(X_s) - \gamma )\, \textd s} \right] 
\le 1 + \frac{(\log r)^{\delta r}\,|\Lambda|}{ \gamma  - \lambda_\Lambda}.
\end{equation}
\end{lemma}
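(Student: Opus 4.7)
The plan is to run the proof of Lemma~\ref{l:mass_out} essentially verbatim, replacing the linear degree bound $D_x \leq \delta r$ on $B_{2r}(\cO)$ supplied by Lemma~\ref{lem:deginball}(a) with the poly-logarithmic bound $D_x \leq (\log r)^{\delta_r}$ supplied by Lemma~\ref{lem:deginball}(b). The latter requires the stronger Assumption~\ref{ass:degextra}, which is the only new ingredient and accounts for the change from $\delta r$ to $(\log r)^{\delta_r}$ in the conclusion.

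First I would set
\begin{equation*}
u(x) := \EE_x \bigl[\eee^{\int_0^{\tau_{\Lambda^\cc}} (\xi(X_s) - \gamma)\, \textd s}\bigr],
\end{equation*}
which solves the boundary value problem $(\Delta + \xi - \gamma) u = 0$ on $\Lambda$ with $u \equiv 1$ on $\Lambda^\cc$. The substitution $u = 1 + v$ reduces this to the inhomogeneous Dirichlet problem $(\Delta + \xi - \gamma)v = \gamma - \xi$ on $\Lambda$ with $v \equiv 0$ on $\Lambda^\cc$. Since $\gamma > \lambda_\Lambda$, the resolvent $\mathcal{R}_\gamma$ of $\Delta + \xi$ on $\ell^2(\Lambda)$ with Dirichlet boundary condition is a bounded positivity-preserving operator, and $v = \mathcal{R}_\gamma(\xi - \gamma)$.

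Next I would bound $\xi - \gamma$ pointwise on $\Lambda$. From the lower bound in \eqref{e:monot_princev} of Lemma~\ref{lem:specbd} one has $\lambda_\Lambda \geq \max_{x \in \Lambda}\xi(x) - \max_{x \in \Lambda} D_x$. Combining with Lemma~\ref{lem:deginball}(b), which gives $\max_{x \in B_{2r}(\cO)} D_x \leq (\log r)^{\delta_r}$ for $r \geq r_0$, $\mathfrak{P}$-a.s., yields
\begin{equation*}
\xi(x) - \gamma \leq \lambda_\Lambda + (\log r)^{\delta_r} - \gamma \leq (\log r)^{\delta_r}, \qquad x \in \Lambda,
\end{equation*}
and the positivity of the resolvent then gives $v(x) \leq (\log r)^{\delta_r}\,(\mathcal{R}_\gamma \mathds{1})(x)$ pointwise on $\Lambda$.

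Finally, I would finish with the two standard resolvent estimates used in the proof of Lemma~\ref{l:mass_out}: $(\mathcal{R}_\gamma \mathds{1})(x) \leq \langle \mathcal{R}_\gamma \mathds{1}, \mathds{1}\rangle_\Lambda$ (since $\mathcal{R}_\gamma \mathds{1} \geq 0$), and $\langle \mathcal{R}_\gamma \mathds{1}, \mathds{1}\rangle_\Lambda \leq |\Lambda|/(\gamma - \lambda_\Lambda)$ (by expanding $\mathds{1}$ in the orthonormal eigenbasis of $\Delta + \xi$ on $\ell^2(\Lambda)$ and using $\gamma - \lambda_k \geq \gamma - \lambda_\Lambda$ for every eigenvalue $\lambda_k$). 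Putting these together yields
\begin{equation*}
u(y) = 1 + v(y) \leq 1 + \frac{(\log r)^{\delta_r}\,|\Lambda|}{\gamma - \lambda_\Lambda},
\end{equation*}
which is the claim. I do not expect any genuine obstacle: the structure of the argument is identical to that of Lemma~\ref{l:mass_out}, and the entire novelty is the sharper degree bound furnished by Assumption~\ref{ass:degextra} through Lemma~\ref{lem:deginball}(b).
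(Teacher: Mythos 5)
Your proposal is correct and is exactly the paper's argument: the paper's proof of this lemma simply says it is identical to that of Lemma~\ref{l:mass_out} with $\delta r$ replaced by $(\log r)^{\delta_r}$ via Lemma~\ref{lem:deginball}(b), which is precisely what you carry out. Spelling out the resolvent steps with the sharper degree bound is a faithful (and slightly more explicit) rendering of the intended proof.
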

\begin{proof}
The proof is identical to that of Lemma~\ref{l:mass_out}, with $\delta r$ replaced by $(\log r)^{\delta r}$ (recall Lemma~\ref{lem:deginball}).
\end{proof}

We need various sets of nearest-neighbour paths in $\GW=(V,E,\cO)$, defined in \cite{dHKdS2020}. For $\ell \in \N_0$ and subsets $\Lambda, \Lambda' \subset V$, put
\begin{equation}
\label{defcurlyP}
\begin{aligned}
&\scrP_\ell(\Lambda,\Lambda') := \left\{ (\pi_0, \ldots, \pi_{\ell}) \in V^{\ell+1} \colon\,
\begin{array}{ll} 
&\pi_0 \in \Lambda, \pi_{\ell} \in \Lambda',\\
&\{\pi_{i}, \pi_{i-1}\} \in E \;\forall\, 1 \le i \le \ell
\end{array}
\right\},\\
&\scrP(\Lambda, \Lambda') := \bigcup_{\ell \in \N_0} \scrP_\ell(\Lambda,\Lambda'),
\end{aligned}
\end{equation}
and set
\begin{equation}
\scrP_\ell := \scrP_\ell(V,V), \qquad \scrP := \scrP(V,V). 
\end{equation}
When $\Lambda$ or $\Lambda'$ consists of a single point, write $x$ instead of $\{x\}$. For $\pi \in \scrP_\ell$, set $|\pi| := \ell$. Write $\supp(\pi) := \{\pi_0, \ldots, \pi_{|\pi|}\}$ to denote the set of points visited by $\pi$.

Let $X=(X_t)_{t\ge0}$ be the continuous-time random walk on $G$ that jumps from $x \in V$ to any neighbour $y\sim x$ at rate $1$. Denote by $(T_k)_{k \in \N_0}$ the sequence of jump times (with $T_0 := 0$). For $\ell \in \N_0$, let 
\begin{equation}
\pi^{\ssup \ell}(X) := (X_0, \ldots, X_{T_{\ell}})
\end{equation}
be the path in $\scrP_\ell$ consisting of the first $\ell$ steps of $X$. For $t  \ge 0$, let
\begin{equation}
\label{e:defpathX0t}
\pi(X_{[0,t]}) = \pi^{\ssup{\ell_t}}(X), \quad \text{ with } \ell_t \in \N_0 \, 
\text{ satisfying } \, T_{\ell_t} \le t < T_{\ell_t+1},
\end{equation}
denote the path in $\scrP$ consisting of all the steps taken by $X$ between times $0$ and $t$.

Recall the definitions from Section~\ref{sizeislands}. For $\pi \in \scrP$ and $A>0$, define
\begin{equation}
\label{e:deflambdaLApi}
\lambda_{r,A}(\pi) := \sup \big\{ \lambda^{\ssup 1}_\CC(\xi; G) 
\colon\, \CC \in \mathfrak{C}_{r,A}, 
\, \supp(\pi)\cap \CC \cap \Pi_{r,A} \neq \emptyset \big\},
\end{equation}
with the convention $\sup \emptyset = -\infty$. 
This is the largest principal eigenvalue among the components of $\mathfrak C_{r,A}$ in $\GW$ 
that have a point of high exceedance visited by the path $\pi$.

%%%

\subsection{Mass of the solution along excursions}
\label{preplem}

\begin{lemma}{\bf [Path evaluation]}
\label{l:path_eval}
For $\ell\in\N_0$, $\pi \in \scrP_\ell$ and $\gamma  > \max_{0 \leq i < |\pi|} \{\xi(\pi_i)-D_{\pi_i}\}$,
\begin{equation}
\label{e:path_eval}
\E_{\pi_0} \left[\ee^{\int_0^{T_{\ell}} (\xi(X_s) -  \gamma )\, \textd s} ~\Big|~ \pi^{\ssup {\ell}}(X) = \pi  \right]
= \prod_{i=0}^{\ell-1} \frac{D_{\pi_i}}{\gamma - [\xi(\pi_i)-D_{\pi_i}]}.
\end{equation}
\end{lemma}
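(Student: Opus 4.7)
The key observation is structural: under the law of the continuous-time walk $X$ on $\GW$ with unit jump rate along each edge, the vertex $\pi_i$ is occupied for a holding time $\tau_i := T_{i+1}-T_i$ that is the minimum of $D_{\pi_i}$ independent rate-one exponential clocks (one per incident edge), so $\tau_i \sim \mathrm{Exp}(D_{\pi_i})$, and the identity of the edge along which $X$ departs is uniform on the $D_{\pi_i}$ neighbours and independent of $\tau_i$. Consequently, conditioning on the event $\{\pi^{\ssup\ell}(X)=\pi\}$, which only fixes the sequence of departure edges, does not alter the joint law of the holding times: given the path $\pi$, the variables $\tau_0,\dots,\tau_{\ell-1}$ remain independent with $\tau_i \sim \mathrm{Exp}(D_{\pi_i})$.

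The plan, then, is first to rewrite the integral pathwise. On $\{\pi^{\ssup\ell}(X)=\pi\}$ we have $X_s = \pi_i$ for $s\in[T_i,T_{i+1})$, so
\begin{equation}
\int_0^{T_\ell} (\xi(X_s)-\gamma)\,\textd s \;=\; \sum_{i=0}^{\ell-1} (\xi(\pi_i)-\gamma)\,\tau_i.
\end{equation}
Taking conditional expectations and invoking the independence described above, the conditional expectation factorises:
\begin{equation}
\E_{\pi_0}\!\left[\ee^{\int_0^{T_\ell}(\xi(X_s)-\gamma)\,\textd s}\,\Big|\,\pi^{\ssup\ell}(X)=\pi\right]
= \prod_{i=0}^{\ell-1} \E\!\left[\ee^{(\xi(\pi_i)-\gamma)\tau_i}\right].
\end{equation}

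Finally, I will evaluate each factor via the moment generating function of an exponential random variable: for $\tau\sim\mathrm{Exp}(D)$ and any $\beta < D$,
\begin{equation}
\E[\ee^{-\beta\tau}] = \frac{D}{D+\beta}.
\end{equation}
Applying this with $D=D_{\pi_i}$ and $\beta = \gamma-\xi(\pi_i)$, the admissibility condition $\beta < D_{\pi_i}$ is exactly $\gamma > \xi(\pi_i)-D_{\pi_i}$, which is guaranteed by the hypothesis $\gamma > \max_{0\leq i<|\pi|}\{\xi(\pi_i)-D_{\pi_i}\}$. Each factor becomes
\begin{equation}
\frac{D_{\pi_i}}{D_{\pi_i}+\gamma-\xi(\pi_i)} = \frac{D_{\pi_i}}{\gamma-[\xi(\pi_i)-D_{\pi_i}]},
\end{equation}
and multiplying over $i$ yields \eqref{e:path_eval}. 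There is no real obstacle: the proof is a one-line exponential-MGF calculation once the conditional independence of the holding times given the skeleton $\pi$ is recorded. The only subtlety is noting that the condition on $\gamma$ is precisely what keeps all exponents negative so that each factor in the product is finite.
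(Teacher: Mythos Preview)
Your proof is correct and follows precisely the approach sketched in the paper: decompose $T_\ell$ into independent $\mathrm{Exp}(D_{\pi_i})$ holding times that are independent of the skeleton $\pi^{\ssup\ell}(X)$, factor the conditional expectation, and compute each factor via the exponential MGF. One cosmetic slip: the validity condition for $\E[\ee^{-\beta\tau}]=D/(D+\beta)$ should read $\beta>-D$ (equivalently $D+\beta>0$), not $\beta<D$; your subsequent derivation of $\gamma>\xi(\pi_i)-D_{\pi_i}$ is nonetheless correct.
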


\begin{proof}
The proof is identical to that of \cite[Lemma 3.2]{dHKdS2020}. The left-hand side of \eqref{e:path_eval} can be evaluated by using the fact that $T_\ell$ is the sum of $\ell$ independent Exp($\deg(\pi_i)$) random variables that are independent of $\pi^{\ssup {\ell}}(X)$. The condition on $\gamma$ ensures that all $\ell$ integrals are finite.
\end{proof}

For a path $\pi \in \scrP$ and $\varepsilon \in (0,1)$, we write
\begin{equation}
\label{e:def_Mpi}
M^{r,\varepsilon}_\pi := \big| \bigl\{0 \leq i < |\pi| \colon\, \xi(\pi_i) \le (1-\varepsilon)a_{L_r}\bigr\}\big|,
\end{equation}
with the interpretation that $M^{r,\varepsilon}_\pi = 0$ if $|\pi|=0$.

\begin{lemma}{\bf [Mass of excursions]}
\label{l:mass_in}
Subject to Assumptions~{\rm \ref{ass:pot}--\ref{ass:degextra}}, for every $A, \varepsilon>0$, $(\Prob\times\mathfrak{P})$-a.s.\ there exists an $r_0 \in \N$ such that, for all $r \ge r_0$, all $\gamma > a_{L_r} - A$ and all $\pi \in \scrP(B_r(\cO), B_r(\cO))$ satisfying $\pi_i \notin \Pi_{r,A}$ for all $0 \leq i < \ell:=|\pi|$,
\begin{equation}
\label{e:mass_in}
\E_{\pi_0} \left[\ee^{ \int_0^{T_{\ell}}(\xi(X_s) - \gamma)\, \textd s} ~\Big|~ \pi^{\ssup {\ell}}(X) = \pi \right]
\leq q_{r,A}^{\ell} \ee^{ M^{r,\varepsilon}_\pi \log[(\log r)^{\delta_r}/a_{L_r,A,\varepsilon}q_{r,A}]},
\end{equation}
where 
\begin{equation}
\label{Aqdef}
a_{L_r,A,\varepsilon} := \varepsilon a_{L_r}-A, \qquad q_{r,A} := \left(1+\frac{A}{(\log r)^{\delta_r}}\right)^{-1}.
\end{equation} 
Note that $\pi_{\ell} \in \Pi_{r,A}$ is allowed.
\end{lemma}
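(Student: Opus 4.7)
The plan is to combine Lemma~\ref{l:path_eval} with a two-tier bound on each factor in the resulting product, splitting the index set $\{0,1,\ldots,\ell-1\}$ according to whether $\xi(\pi_i)$ is \emph{intermediate} (in the range $((1-\varepsilon)a_{L_r},\,a_{L_r}-2A]$) or \emph{low} ($\leq(1-\varepsilon) a_{L_r}$). The hypothesis $\pi_i \notin \Pi_{r,A}$ for $i<\ell$ together with definition \eqref{def:Pi} gives $\xi(\pi_i) \leq a_{L_r}-2A$, so, combined with $\gamma > a_{L_r}-A$, one has $\gamma - \xi(\pi_i) \geq A > 0 \geq -D_{\pi_i}$. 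This verifies the hypothesis of Lemma~\ref{l:path_eval}, and the conditional expectation reduces to $\prod_{i=0}^{\ell-1} D_{\pi_i}/[(\gamma-\xi(\pi_i))+D_{\pi_i}]$.

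For uniform control of the denominators I would invoke Lemma~\ref{lem:deginball}(b) with the Borel--Cantelli lemma to fix $r_0$ beyond which, $\mathfrak{P}$-a.s., every vertex of $B_{2r}(\cO)$ has degree at most $(\log r)^{\delta_r}$. Rewriting each factor as $\bigl(1+(\gamma-\xi(\pi_i))/D_{\pi_i}\bigr)^{-1}$ and using $\gamma-\xi(\pi_i) \geq A$ together with the degree upper bound yields the uniform estimate by $q_{r,A} = (1+A/(\log r)^{\delta_r})^{-1}$, valid for every $i<\ell$. Applied to all $\ell$ factors this already produces the prefactor $q_{r,A}^\ell$.

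The refinement on the $M^{r,\varepsilon}_\pi$ low-potential indices comes from upgrading the lower bound on $\gamma-\xi(\pi_i)$ to $\varepsilon a_{L_r}-A = a_{L_r,A,\varepsilon}$, whence such a factor is bounded by $D_{\pi_i}/a_{L_r,A,\varepsilon} \leq (\log r)^{\delta_r}/a_{L_r,A,\varepsilon}$. Using the $q_{r,A}$-bound on the $\ell-M^{r,\varepsilon}_\pi$ intermediate-potential indices and the sharper bound on the remaining $M^{r,\varepsilon}_\pi$ indices, then factoring out $q_{r,A}^\ell$, gives
$$
q_{r,A}^{\ell-M^{r,\varepsilon}_\pi}\left(\frac{(\log r)^{\delta_r}}{a_{L_r,A,\varepsilon}}\right)^{M^{r,\varepsilon}_\pi} \;=\; q_{r,A}^{\ell}\exp\!\left(M^{r,\varepsilon}_\pi\,\log\frac{(\log r)^{\delta_r}}{a_{L_r,A,\varepsilon}\,q_{r,A}}\right),
$$
which is precisely the right-hand side of \eqref{e:mass_in}.

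The main obstacle is the replacement of the bounded-degree argument from \cite{dHKdS2020} by a uniformly valid subpolylogarithmic degree estimate on the relevant part of $\GW$. Assumption~\ref{ass:degextra} is calibrated precisely so that Lemma~\ref{lem:deginball}(b) supplies a function $\delta_r \to 0$ with $(\log r)^{\delta_r}$ playing the role of $d_{\max}$, guaranteeing $q_{r,A} \to 1$ quickly enough. One subtlety is that vertices visited by the excursion need not all lie in $B_r(\cO)$, but since any excursion with both endpoints in $B_r(\cO)$ staying within the relevant spatial scale remains inside $B_{2r}(\cO)$ (the regime controlled in Lemma~\ref{lem:deginball}(b)), the uniform degree bound is applicable, and the estimate goes through.
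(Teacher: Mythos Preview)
Your proposal is correct and follows precisely the approach indicated by the paper, which simply states that the proof is identical to \cite[Lemma~3.3]{dHKdS2020} with $d_{\max}$ replaced by $(\log r)^{\delta_r}$ via Lemma~\ref{lem:deginball}(b). You have spelled out exactly what this replacement entails: apply Lemma~\ref{l:path_eval}, bound each factor $D_{\pi_i}/[(\gamma-\xi(\pi_i))+D_{\pi_i}]$ by $q_{r,A}$ using $\gamma-\xi(\pi_i)\geq A$ and the degree bound, sharpen to $(\log r)^{\delta_r}/a_{L_r,A,\varepsilon}$ on the $M^{r,\varepsilon}_\pi$ low-potential indices via $\gamma-\xi(\pi_i)\geq a_{L_r,A,\varepsilon}$, and regroup; your observation about the support of $\pi$ relative to $B_{2r}(\cO)$ is a reasonable caveat that the paper does not make explicit (in the applications downstream, notably Lemma~\ref{l:fixed_class}, one always has $\supp(\pi)\subset B_r(\cO)$).
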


\begin{proof}
The proof is identical to that of \cite[Lemma 3.3]{dHKdS2020}, with $d_{\max}$ replaced by $(\log r)^{\delta_r}$ (recall Lemma~\ref{lem:deginball}). 
\end{proof}

We follow \cite[Definition 3.4]{dHKdS2020} and \cite[Section 6.2]{BKS2018}. Note that the distance between $\Pi_{r,A}$ and $D_{r,A}^\cc$ in $\GW$ is at least $S_r = (\log L_r)^\alpha$ (recall \eqref{def:Pi}--\eqref{def:D}).

\begin{definition}{\bf [Concatenation of paths]} {\rm (a)}
\label{def:concat}
When $\pi$ and $\pi'$ are two paths in $\scrP$ with $\pi_{|\pi|} = \pi'_0$, 
we define their \emph{concatenation} as
\begin{equation}
\label{def_concat}
\pi \circ \pi' := (\pi_0, \ldots, \pi_{|\pi|}, \pi'_1, \ldots, \pi'_{|\pi'|}) \in \scrP.
\end{equation}
Note that $|\pi \circ \pi'| = |\pi| + |\pi'|$. 

\medskip\noindent
{\rm (b)} When $\pi_{|\pi|} \neq \pi'_0$, we can still define the \emph{shifted concatenation} of $\pi$ and $\pi'$ as $\pi \circ \hat{\pi}'$, where $\hat{\pi}' := (\pi_{|\pi|}, \pi_{|\pi|}  + \pi'_1 - \pi'_0, \ldots, \pi_{|\pi|} + \pi'_{|\pi'|} - \pi'_0)$. The shifted concatenation of multiple paths is defined inductively via associativity. 
\end{definition}

Now, if a path $\pi \in \scrP$ intersects $\Pi_{r,A}$, then it can be decomposed into an initial path, 
a sequence of excursions between $\Pi_{r,A}$ and $D_{r,A}^\cc$, and a terminal path. 
More precisely, there exists $m_\pi \in \N $ such that
\begin{equation}
\label{e:concat1}
\pi = \check{\pi}^{\ssup 1} \circ \hat{\pi}^{\ssup 1} \circ \cdots \circ \check{\pi}^{\ssup {m_\pi}} 
\circ \hat{\pi}^{\ssup {m_\pi}} \circ \bar{\pi},
\end{equation}
where the paths in \eqref{e:concat1} satisfy
\begin{equation}
\label{e:concat2}
\begin{alignedat}{9}
\check{\pi}^{\ssup 1} & \in  \scrP(V, \Pi_{r,A}) 
&\qquad\text{with}\qquad& 
\check{\pi}^{\ssup 1}_i & \notin  \Pi_{r,A}, & \quad\, 0\le i < |\check{\pi}^{\ssup 1}|, 
\\
\hat{\pi}^{\ssup k} & \in  \scrP(\Pi_{r,A}, D_{r,A}^\cc) 
&\qquad\text{with}\qquad& 
\hat{\pi}^{\ssup k}_i & \in  D_{r, A}, & \quad\, 0\le i < |\hat{\pi}^{\ssup k}|, \; 1 \le k \le m_{\pi} - 1, 
\\
\check{\pi}^{\ssup k} & \in  \scrP(D_{r,A}^\cc, \Pi_{r,A}) 
&\qquad\text{with}\qquad& 
\check{\pi}^{\ssup k}_i & \notin  \Pi_{r,A}, & \quad\, 0\le i < |\check{\pi}^{\ssup k}|, \; 2 \le k \le m_\pi, 
\\
\hat{\pi}^{\ssup {m_\pi}} & \in  \scrP(\Pi_{r,A}, V) 
&\qquad\text{with}\qquad& 
\hat{\pi}^{\ssup {m_\pi}}_i & \in  D_{r,A}, & \quad\, 0\le i < |\hat{\pi}^{\ssup {m_\pi}}|, 
\end{alignedat}
\end{equation}
while
\begin{equation}
\label{e:concat3}
\begin{array}{ll} 
\bar{\pi} \in \scrP(D_{r,A}^\cc, V) \text{ and } \bar{\pi}_i \notin \Pi_{r,A} \; \forall\, i \ge 0 
& \text{ if } \hat{\pi}^{\ssup {m_\pi}} \in \scrP(\Pi_{r,A}, D^\cc_{r, A}), \\
\bar{\pi}_0 \in D_{r,A}, |\bar{\pi}| = 0  & \text{ otherwise.}
\end{array}
\end{equation}
Note that the decomposition in \eqref{e:concat1}--\eqref{e:concat3} is unique, and that the paths $\check{\pi}^{\ssup 1}$, $\hat{\pi}^{\ssup {m_\pi}}$ and $\bar{\pi}$ can have zero length. If $\pi$ is contained in $B_r(\cO)$, then so are all the paths in the decomposition. 

Whenever $\supp(\pi) \cap \Pi_{r,A} \ne \emptyset$ and $\varepsilon > 0$, we define
\begin{align}
\label{e:defnpikpi}
s_\pi := \sum_{i=1}^{m_\pi} |\check{\pi}^{\ssup i}| + |\bar{\pi}|, \qquad
k^{r,\varepsilon}_\pi := \sum_{i=1}^{m_\pi} M^{r,\varepsilon}_{\check{\pi}^{\ssup i}} + M^{r,\varepsilon}_{\bar{\pi}} 
\end{align}
to be the total time spent in exterior excursions, respectively, on moderately low points of the potential visited by exterior excursions (without their last point). 

In case $\supp(\pi) \cap \Pi_{r,A} = \emptyset$, we set $m_\pi := 0$, $s_\pi := |\pi|$ and $k^{r,\varepsilon}_\pi := M^{r,\varepsilon}_{\pi}$. Recall from \eqref{e:deflambdaLApi} that, in this case, $\lambda_{r,A}(\pi) = -\infty$. 

We say that $\pi, \pi' \in \scrP$ are \emph{equivalent}, written $\pi' \sim \pi$, if $m_{\pi} = m_{\pi'}$, $\check{\pi}'^{\ssup i}=\check{\pi}^{\ssup i}$ for all $i=1,\ldots,m_{\pi}$, and $\bar{\pi}' = \bar{\pi}$. If $\pi' \sim \pi$, then $s_{\pi'}$, $k^{r, \varepsilon}_{\pi'}$ and $\lambda_{r,A}(\pi')$ are all equal to the counterparts for $\pi$.

To state our key lemma, we define, for $m,s \in \N_0$,
\begin{equation}
\label{e:defPmn}
\scrP^{(m,s)} = \left\{ \pi \in \scrP \colon\, m_\pi = m, s_\pi = s \right\},
\end{equation}
and denote by
\begin{equation}
\label{def_CLA}
C_{r,A}:= \max \{|\CC| \colon\, \CC \in \mathfrak{C}_{r,A}\}
\end{equation}
the maximal size of the islands in $\mathfrak{C}_{r,A}$.

\begin{lemma}{\bf [Mass of an equivalence class]}
\label{l:fixed_class}
Subject to Assumptions~{\rm \ref{ass:pot}} and {\rm \ref{ass:degextra}}, for every $A,\varepsilon > 0$, $(\Prob\times\mathfrak{P})$-a.s.\ there exists an $r_0 \in \N$ such that, for all $r \ge r_0$, all $m,s \in \N_0$, all $\pi \in \scrP^{(m,s)}$ with $\supp(\pi) \subset B_r(\cO)$, all $\gamma > \lambda_{r,A}(\pi) \vee (a_{L_r} -A)$ and all $t \ge 0$,
\begin{multline}
\label{e:fixed_class}
\qquad
\E_{\pi_0} \left[ \texte^{\int_0^t (\xi(X_u) - \gamma)\, \textd u}\, \1_{\{\pi(X_{[0,t]}) \sim \pi\}} \right]\\
\le \left(C_{r,A}^{1/2} \right)^{\1_{\{m>0\}}} \left(1+\frac{(\log r)^{\delta_r} \, C_{r,A}}{\gamma - \lambda_{r,A}(\pi)} \right)^m 
\left(\frac{q_{r,A}}{(\log r)^{\delta_r}}\right)^s \ee^{k^{r,\varepsilon}_{\pi}\log[(\log r)^{\delta_r}/a_{L_r,A,\varepsilon}q_{r,A}]}.
\end{multline}
\end{lemma}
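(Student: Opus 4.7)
The argument parallels that of \cite[Lemma 3.5]{dHKdS2020}, with the uniform degree bound $d_{\max}$ of the bounded-degree setting replaced by the almost-sure upper bound $D_x \leq (\log r)^{\delta_r}$ on $B_{2r}(\cO)$ supplied by Lemma~\ref{lem:deginball}(b), and with Lemmas~\ref{l:mass_out2} and~\ref{l:mass_in} used in place of their analogues. I first dispose of the case $m_\pi = 0$ (where $\supp(\pi) \cap \Pi_{r,A} = \emptyset$, so $\lambda_{r,A}(\pi) = -\infty$), which follows directly from Lemma~\ref{l:mass_in} applied to $\pi$, combined with the path-probability factor $\prod_{i}(1/D_{\pi_i})$ and a trivial bound on the final partial sojourn at $\pi_{|\pi|}$.

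Assuming $m_\pi \geq 1$, the key tool is the strong Markov property at the successive hitting times $0 = \tau_0 < \sigma_1 < \tau_1 < \cdots < \sigma_{m_\pi} < \tau_{m_\pi}$, where $\sigma_k$ is the $k$-th entry time of $X$ into $\Pi_{r,A}$ and $\tau_k$ the subsequent exit time from $D_{r,A}$. On the event $\{\pi(X_{[0,t]}) \sim \pi\}$, the walk follows the deterministic exterior path $\check\pi^{\ssup k}$ on $[\tau_{k-1}, \sigma_k]$ for $1 \leq k \leq m_\pi$ (and $\bar\pi$ after $\tau_{m_\pi}$), and performs an unrestricted excursion in the island $\CC_k \in \mathfrak{C}_{r,A}$ containing $\check\pi^{\ssup k}_{|\check\pi^{\ssup k}|}$ during $[\sigma_k, \tau_k]$. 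Strong Markov factorises the expectation into a product of alternating exterior-excursion and island factors, which I then bound one by one.

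Each island factor $\E_{\hat\pi^{\ssup k}_0}[\eee^{\int_0^{\tau_{\CC_k^{\mathrm c}}}(\xi-\gamma)}]$ is bounded via Lemma~\ref{l:mass_out2} with $\Lambda = \CC_k$ by $1 + (\log r)^{\delta_r}|\CC_k|/(\gamma - \lambda_{\CC_k}) \leq 1 + (\log r)^{\delta_r} C_{r,A}/(\gamma - \lambda_{r,A}(\pi))$, using $|\CC_k| \leq C_{r,A}$ and $\lambda_{\CC_k} \leq \lambda_{r,A}(\pi)$ (since $\CC_k$ contains a point of $\supp(\pi) \cap \Pi_{r,A}$); raising to the power $m$ produces the corresponding factor in the statement. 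Each exterior factor, for the fixed path $\check\pi^{\ssup k}$ (or $\bar\pi$) of length $\ell$, is bounded by writing $\E[\,\cdot\,\1_{\pi^{\ssup \ell}(X) = \check\pi^{\ssup k}}] = \prod_{i=0}^{\ell-1}(1/D_{\check\pi^{\ssup k}_i}) \cdot \E[\,\cdot\,\mid\,\pi^{\ssup \ell}(X) = \check\pi^{\ssup k}]$, inserting Lemma~\ref{l:mass_in} for the conditional factor, and invoking $D_x \leq (\log r)^{\delta_r}$ at every vertex visited; this yields $(q_{r,A}/(\log r)^{\delta_r})^{\ell}\,\exp\{M^{r,\varepsilon}_{\check\pi^{\ssup k}} \log[(\log r)^{\delta_r}/a_{L_r,A,\varepsilon} q_{r,A}]\}$ per excursion, which multiplies correctly across the $m_\pi + 1$ exterior pieces.

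The $C_{r,A}^{1/2}$ correction when $m > 0$ arises from the boundary at the terminal time $t$: if the walk is stopped inside the final island, the partial Feynman--Kac functional on $\CC_{m_\pi}$ contributes an extra factor that I control via a Cauchy--Schwarz step against the island volume, producing $|\CC_{m_\pi}|^{1/2} \leq C_{r,A}^{1/2}$. Assembling the exterior, island and boundary contributions and collecting powers yields the claimed inequality. The main obstacle is the careful bookkeeping at the terminal time $t$, since $X_t$ need not coincide with the end of a completed excursion: one must split cases according to whether $X_t$ lies in an island or in the exterior, and absorb the resulting defect either via the Cauchy--Schwarz step (hence the $C_{r,A}^{1/2}$ factor) or via a harmless enlargement of the last exterior piece. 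Once this point is handled, the remainder is a direct accounting of factors along the decomposition \eqref{e:concat1}--\eqref{e:concat3}.
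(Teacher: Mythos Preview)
Your overall plan coincides with the paper's proof, which simply refers to \cite[Lemma~3.5]{dHKdS2020} with $d_{\max}$ replaced throughout by $(\log r)^{\delta_r}$ via Lemma~\ref{lem:deginball}(b): strong Markov factorisation at the successive entry/exit times of $\Pi_{r,A}$ and $D_{r,A}$, Lemma~\ref{l:mass_out2} on each island, Lemma~\ref{l:mass_in} on each exterior piece, and a terminal spectral (Parseval/Cauchy--Schwarz) bound on the last island producing the factor $C_{r,A}^{1/2}$.

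There is, however, one step in your exterior bookkeeping that does not go through as written. After splitting the unconditional exterior factor into $\prod_i (1/D_{\check\pi_i})$ times the conditional expectation from Lemma~\ref{l:mass_in}, you propose to invoke $D_x \le (\log r)^{\delta_r}$ to convert $\prod_i (1/D_{\check\pi_i})$ into $((\log r)^{\delta_r})^{-\ell}$. The inequality points the wrong way: $D_x \le (\log r)^{\delta_r}$ gives $1/D_x \ge (\log r)^{-\delta_r}$, a \emph{lower} bound. In fact, combining the path-probability factor with Lemma~\ref{l:path_eval}, the degrees cancel and the exterior contribution is exactly $\prod_i \bigl(\gamma-\xi(\check\pi_i)+D_{\check\pi_i}\bigr)^{-1}$; each factor is \emph{decreasing} in $D$, so only the lower bound $D_x \ge d_{\min}$ is usable, yielding at best $(A+d_{\min})^{-1}$ per intermediate step rather than $(A+(\log r)^{\delta_r})^{-1}=q_{r,A}/(\log r)^{\delta_r}$. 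Thus the exterior factor per step comes out as $q_{r,A}/d_{\min}$, not $q_{r,A}/(\log r)^{\delta_r}$. This is harmless for the downstream application (Proposition~\ref{p:massclass} absorbs the extra $((\log r)^{\delta_r}/d_{\min})^s$ once $A\ge A_0(r)$), but your derivation of \eqref{e:fixed_class} as stated fails at precisely this step.
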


\begin{proof}
The proof is identical to that of \cite[Lemma 3.5]{dHKdS2020}, with $d_{\max}$ is replaced by $(\log r)^{\delta_r}$ (recall Lemma~\ref{lem:deginball}). 
\end{proof}

%%%

\subsection{Key proposition}
\label{ss:keyprop}

The main result of this section is the following proposition.

\begin{proposition}{\bf [Entropy reduction]}
\label{p:massclass}
Let $\alpha \in (0,1)$ and $\kappa\in (\alpha,1)$. Subject to Assumption~{\rm \ref{ass:degextra}}, there exists an $A_0(r)$ such that, for all $A \geq A_0(r)$, with $\mathfrak{P}$-probability tending to one as $r\to\infty$, the following statement is true. For each $x \in B_r(\cO)$, each $\NN \subset \scrP(x,B_r(\cO))$ satisfying $\supp(\pi) \subset B_r(\cO)$ and $\max_{1 \le \ell \le |\pi|} \dist_{G}(\pi_\ell, x) \geq (\log L_r)^\kappa$ for all $\pi \in \mathcal{N}$, and each assignment $\pi\mapsto (\gamma_\pi , z_\pi)\in \R \times V$ satisfying
\begin{equation}
\label{e:cond_massclass1}
\gamma_\pi \ge \left(\lambda_{r,A}(\pi)  + \texte^{-S_r} \right) \vee (a_{L_r}- A) \qquad \forall\,\,\pi \in \NN
\end{equation}
and
\begin{equation}
\label{e:cond_massclass2}
z_\pi \in \supp(\pi) \cup 
\bigcup_{ \substack{\CC \in \mathfrak{C}_{r,A} \colon \\ \supp(\pi) \cap \CC \cap \Pi_{r,A} \neq \emptyset}} \CC 
\qquad \forall\,\, \pi \in \NN,
\end{equation}
the following inequality holds for all $t \geq 0$:
\begin{equation}
\label{e:mass_class}
\log \E_x \left[ \texte^{\int_0^t \xi(X_s) \textd s} \1_{\{\pi(X_{[0,t]}) \in \mathcal{N}\}}\right]
\leq \sup_{\pi \in \mathcal{N}} \Big\{ t \gamma_\pi + \dist_{G}(x,z_\pi) \log[(\log r)^{\delta_r}/a_{L_r,A,\varepsilon}q_{r,A}]\Big\}. 
\end{equation}
\end{proposition}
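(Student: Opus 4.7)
The plan is to adapt the proof of the analogous result \cite[Proposition~3.6]{dHKdS2020} by replacing the bounded-degree factor $d_{\max}$ with the quantity $(\log r)^{\delta_r}$ provided, uniformly on $B_{2r}(\cO)$, by Lemma~\ref{lem:deginball}(b). First I would partition $\mathcal N$ into equivalence classes $[\pi]$ in the sense of \eqref{e:concat1}--\eqref{e:concat3} and write
\begin{equation*}
\E_x\bigl[\ee^{\int_0^t\xi(X_s)\,\textd s}\,\1_{\{\pi(X_{[0,t]})\in\mathcal N\}}\bigr]
=\sum_{[\pi]\subset\mathcal N}\ee^{t\gamma_\pi}\,\E_x\bigl[\ee^{\int_0^t(\xi(X_s)-\gamma_\pi)\,\textd s}\,\1_{\{\pi(X_{[0,t]})\sim\pi\}}\bigr],
\end{equation*}
bounding each summand via Lemma~\ref{l:fixed_class}. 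Hypothesis~\eqref{e:cond_massclass1} supplies a spectral gap $\gamma_\pi-\lambda_{r,A}(\pi)\ge\ee^{-S_r}$; combined with $C_{r,A}\le\ee^{2\vartheta M_A S_r}$ from Lemma~\ref{lem:size}, this shows the $m$-th power factor in \eqref{e:fixed_class} is at most $\ee^{K m S_r}$ for some deterministic $K<\infty$.

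Second, I would count equivalence classes with $m_\pi=m$ and $s_\pi=s$ starting at $x$. Lemma~\ref{lem:deginball}(b) gives a uniform degree bound $(\log r)^{\delta_r}$ throughout $B_{2r}(\cO)$, so the number of nearest-neighbour paths of length $\ell$ in $B_r(\cO)$ starting at $x$ is at most $((\log r)^{\delta_r})^\ell$. Since an equivalence class is specified by its exterior excursions (of total length $s$) together with the choice of island hosting each interior excursion, the count is at most $((\log r)^{\delta_r})^{s}\,C_{r,A}^m$. Simultaneously, I would invoke Lemma~\ref{l:bound_mediumpoints} to obtain $k^{r,\varepsilon}_\pi\le s_\pi/(\log L_r)^\varepsilon$; this is applicable because the hypothesis $\max_\ell \dist_G(\pi_\ell,x)\ge(\log L_r)^\kappa$ with $\kappa>\alpha$ guarantees that the path length exceeds the threshold required by that lemma.

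Third, I would combine and sum. The counting factor $((\log r)^{\delta_r})^s$ cancels exactly with the $(\log r)^{-\delta_r s}$ in \eqref{e:fixed_class}, leaving $q_{r,A}^s<1$; condition~\eqref{e:cond_massclass2} yields $\dist_G(x,z_\pi)\le s_\pi+2M_AS_r$; and the resulting double sum over $(m,s)$ is a convergent geometric series whose logarithm is bounded by the supremum claimed in \eqref{e:mass_class}. The hard part will be the final balancing: verifying that the negative quantity $\dist_G(x,z_\pi)\cdot\log[(\log r)^{\delta_r}/a_{L_r,A,\varepsilon}q_{r,A}]$ (whose coefficient is of order $-\log\log r$) truly absorbs both the combinatorial entropy $\delta_r s\log\log r$ and the interior-excursion cost $m(KS_r+\log C_{r,A})$. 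Assumption~\ref{ass:degextra} is indispensable here: the slow decay $\delta_r\to 0$ (with $r\,\delta_r'\to 0$) keeps $(\log r)^{\delta_r}$ sub-polynomial in $\log r$, so that $a_{L_r,A,\varepsilon}\asymp\log r$ still dominates and the cancellation of $(\log r)^{\pm\delta_r s}$ remains meaningful uniformly over the growing ball $B_r(\cO)$, mirroring the role played by the now-absent bound $d_{\max}<\infty$ in \cite{dHKdS2020}.
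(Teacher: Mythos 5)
Your first two stages (decomposition into equivalence classes, counting them via the degree bound $(\log r)^{\delta_r}$ from Lemma~\ref{lem:deginball}, and applying Lemma~\ref{l:fixed_class} with the spectral gap $\gamma_\pi-\lambda_{r,A}(\pi)\ge\ee^{-S_r}$ and $C_{r,A}\le\ee^{2\vartheta M_AS_r}$) follow the paper's route, up to a small slip in the class count (one must also record the entry vertex of each exterior excursion on $\partial\CC\cup\CC$, giving an extra factor of order $((\log r)^{\delta_r}C_{r,A})^m$ rather than $C_{r,A}^m$; this is harmless since it is absorbed the same way). But there is a genuine gap at the heart of the ``entropy reduction'': after summing, the intermediate bound is $\sup_\pi\{t\gamma_\pi+k^{r,\varepsilon}_\pi\log[(\log r)^{\delta_r}/a_{L_r,A,\varepsilon}q_{r,A}]\}$, where the logarithm is \emph{negative} (of order $-\log\log r$). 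To reach \eqref{e:mass_class} you need a \emph{lower} bound $k^{r,\varepsilon}_\pi\ge\dist_G(x,z_\pi)(1-o(1))$. You instead invoke Lemma~\ref{l:bound_mediumpoints} to claim $k^{r,\varepsilon}_\pi\le s_\pi/(\log L_r)^\varepsilon$: this misreads the lemma (it bounds the number $N_\pi$ of \emph{high} points along self-avoiding paths, whereas $k^{r,\varepsilon}_\pi$ counts \emph{low} points on exterior excursions) and is in the wrong direction — an upper bound on $k^{r,\varepsilon}_\pi$ only weakens the estimate and never produces $\dist_G(x,z_\pi)$. Likewise $\dist_G(x,z_\pi)\le s_\pi+2M_AS_r$ does not help, because the reward in Lemma~\ref{l:fixed_class} is paid in $k^{r,\varepsilon}_\pi$, not in $s_\pi$. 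The missing step is the paper's construction of an auxiliary path $\pi_\star$ (concatenation of two loop erasures of $\pi$ through points $z',z''$ with $\dist_G(x,z')\ge(\log L_r)^\kappa$ and $\dist_G(z'',z_\pi)\le 2M_AS_r$, with excursions replaced by geodesics), to which Lemmas~\ref{l:bound_mediumpoints}--\ref{l:boundhighexceedances} are applied to show that all but a vanishing fraction of $\supp(\pi_\star)$ consists of low points, whence $k^{r,\varepsilon}_\pi\ge k^{r,\varepsilon}_{\pi_\star}\ge|\supp(\pi_\star)|(1-(\log L_r)^{-\varepsilon'})\ge\dist_G(x,z_\pi)(1-2(\log L_r)^{-\varepsilon'})$.

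A second, smaller but still essential, omission concerns the convergence of the sum over $(m,s)$. Noting $q_{r,A}^s<1$ is not enough: each $m$ carries a factor of size $\ee^{c_0m\log r}$ (from $C_{r,A}^m$, the class count and the resolvent factor), so you must use the geometric constraint $s\ge[(m-1)\vee1]S_r$ — which is where the hypothesis $\max_\ell\dist_G(\pi_\ell,x)\ge(\log L_r)^\kappa$ with $\kappa>\alpha$ enters for $m\in\{0,1\}$ — together with the explicit choice $A_0(r)=(\log r)^{\delta_r}(\ee^{3c_0(\log r)^{1-\alpha}}-1)$, which makes $q_{r,A}^{S_r}\le\ee^{-3c_0\log r}$ and hence the whole $(m,s)$-sum at most $1$. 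Your proposal leaves $A_0(r)$ unused and frames the ``final balancing'' as the distance term absorbing the combinatorial entropy, which is not how the estimate closes.
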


\begin{proof}
The proof is based on \cite[Section 3.4]{dHKdS2020}. First fix $c_0 >2$  and define
\begin{equation}
\label{e:defc0A0}
A_0(r) = (\log r)^{\delta_r} \left( \ee^{3 c_0(\log r)^{1-\alpha}}-1\right).
\end{equation}
Fix $A \geq A_0(r)$, $\beta \in (0,\alpha)$ and $\varepsilon \in (0,\frac12\beta)$ as in Lemma~\ref{l:bound_mediumpoints}. Let $r_0 \in \N$ be as given in Lemma~\ref{l:fixed_class}, and take $r \ge r_0$ so large that the conclusions of Lemmas~\ref{lem:deginball}, \ref{lem:size}, \ref{l:eigislands} and \ref{l:bound_mediumpoints} hold, i.e., assume that the events $\BB_r$ and $\BB_{r,A}$ in these lemmas do not occur. Fix $x \in B_r(\cO)$. Recall the definitions of $C_{r,A}$ and $\scrP^{(m,s)}$. Note that the relation $\sim$ is an equivalence relation in $\scrP^{(m,s)}$, and define
\begin{equation}
\label{e:propmassclass2}
\widetilde{\scrP}^{(m,s)}_x := \big\{\text{equivalence classes of the paths in } \scrP(x,V) \cap \scrP^{(m,s)}\big\}.
\end{equation}
The following bounded on the cardinality of this set is needed.

\begin{lemma}{\bf [Bound equivalence classes]}
\label{l:propmassclass3}
Subject to Assumption~{\rm \ref{ass:degextra}}, $\mathfrak{P}$-a.s.,$|\widetilde{\scrP}^{(m,s)}_x| $ $\le (2C_{r,A})^m (\log r)^{\delta_r (m+s)}$ for all $m,s \in \N_0$.
\end{lemma}

\begin{proof}
We can copy the proof of \cite[Lemma 3.6]{dHKdS2020}, replacing $d_{\max}$ by $(\log r)^{\delta_r}$.

The estimate is clear when $m=0$. To prove that it holds for $m \ge 1$, write $\partial \Lambda := \{z \notin \Lambda \colon\, \dist_{G}(z, \Lambda)=1\}$ for $\Lambda \subset V$. Then $|\partial \CC \cup \CC| \leq ((\log r)^{\delta_r}+1) |\CC| \leq 2(\log r)^{\delta_r} C_{r,A}$ by Lemma~\ref{lem:deginball}. Define the map $\Phi\colon\widetilde{\scrP}^{(m,s)}_x \to\scrP_s(x,V) \times \{1, \ldots, 2(\log r)^{\delta_r} C_{r,A} \}^m$ as follows. For each $\Lambda \subset V$ with $1 \le |\Lambda| \le 2(\log r)^{\delta_r} C_{r,A}$, fix an injection $f_\Lambda\colon \Lambda \to \{1, \ldots, 2(\log r)^{\delta_r} C_{r,A} \}$. Given a path $\pi \in \scrP^{(m,s)} \cap \scrP(x,V)$, decompose $\pi$, and denote by $\widetilde{\pi} \in \scrP_s(x, V)$ the shifted concatenation of $\check\pi^{\ssup 1}, \ldots, \check\pi^{\ssup m}$, $\bar{\pi}$. Note that, for $2\le k\le m$, the point $\check\pi^{\ssup k}_0$ lies in $\partial\CC_k$ for some $\CC_k\in \mathfrak{C}_{r,A}$, while $\bar{\pi}_0 \in \partial \overline{\CC} \cup \overline{\CC}$ for some $\overline{\CC} \in \mathfrak{C}_{r,A}$. Thus, it is possible to set 
\begin{equation} 
\Phi(\pi):= 
\bigl(\widetilde \pi,f_{\partial \CC_2}(\check{\pi}^{\ssup 2}_0),\dots,
f_{\partial \CC_m}(\check{\pi}^{\ssup{m}}_0), f_{\partial \bar{\CC} \cup \bar{\CC}}(\bar{\pi}_0) \bigr).
\end{equation}
It is readily checked that $\Phi(\pi)$ depends only on the equivalence class of $\pi$ and, when restricted to equivalence classes, $\Phi$ is injective. Hence the claim follows.
\end{proof}

Now take $\NN \subset \scrP(x, V)$ as in the statement, and set
\begin{equation}
\label{e:propmassclass1}
\widetilde{\mathcal{N}}^{(m,s)} := \big\{\text{equivalence classes of paths in } 
\NN \cap \scrP^{(m,s)}\big\} \subset \widetilde{\scrP}^{(m,s)}_x.
\end{equation}
For each $\MM \in \widetilde{\NN}^{(m,s)}$, choose a representative $\pi_\MM \in \MM$, and use Lemma~\eqref{l:propmassclass3} to write 
\begin{align}
\label{e:propmassclass6}
& \E_x \left[ \texte^{\int_0^t \xi(X_u) \textd u} \1_{\{\pi(X_{[0,t]}) \in \mathcal{N}\}} \right] 
= \sum_{m, s \in \N_0}  \sum_{\MM \in \widetilde{\mathcal{N}}^{(m,s)}}
\E_x \left[ \texte^{\int_0^t \xi(X_u) \textd u} \1_{\{\pi(X_{[0,t]}) \sim \pi_\MM \}} \right] \nonumber\\
& \quad\qquad \le \sum_{m, s \in \N_0} (2 (\log r)^{\delta_r} C_{r,A})^m ((\log r)^{\delta_r})^s 
\sup_{\pi \in \NN^{(m,s)}} \E_x \left[ \texte^{\int_0^t \xi(X_u) \textd u} \1_{\{\pi(X_{[0,t]}) \sim \pi\}} \right]
\end{align}
with the convention $\sup \emptyset = 0$. For fixed $\pi \in \mathcal{N}^{(m,s)}$, by \eqref{e:cond_massclass1}, apply \eqref{e:fixed_class} and Lemma~\ref{lem:size} to obtain, for all $r$ large enough and with $c_0 >2$ ,
\begin{equation}
\label{e:propmassclass7}
\begin{aligned}
&(2 (c\log r)^{\delta_r})^m  (\log r)^{\delta_r s}\, 
\E_x \left[ \texte^{\int_0^t \xi(X_u) \textd u} \1_{\{\pi(X_{[0,t]}) \sim \pi\}} \right]\\ 
&\qquad \le \texte^{t \gamma_\pi } \texte^{c_0 m \log r} q_{r,A}^s\, 
\ee^{k^{r,\varepsilon}_\pi \log[(\log r)^{\delta_r}/a_{L_r,A,\varepsilon}q_{r,A}]}.
\end{aligned}
\end{equation}

We next claim that, for $r$ large enough and $\pi \in \NN^{(m,s)}$,
\begin{equation}
\label{e:propmassclass7.1}
s \ge \left[(m-1)\vee 1 \right] S_r .
\end{equation}
Indeed, when $m\ge 2$, $|\supp(\check{\pi}^{\ssup i})| \ge S_r$ for all $2 \le i \le m$. When $m=0$, $|\supp(\pi)| \ge \max_{1 \le \ell \le |\pi|} |\pi_\ell -x| \ge (\log L_r)^\kappa \gg S_r$ by assumption. When $m=1$, the latter assumption and Lemma~\ref{lem:size} together imply that $\supp(\pi) \cap D^\cc_{r,A} \neq \emptyset$, and so either $|\supp(\check{\pi}^{\ssup 1})| \geq S_r$ or $|\supp(\bar{\pi})|\ge S_r$. Thus, \eqref{e:propmassclass7.1} holds by the definition of $S_r$ and $s$.

Note that $q_{r,A}^{S_r} < \ee^{-3c_0\log r}$, so
\begin{equation}
\label{e:propmassclass7.5}
\sum_{m \geq 0} \sum_{s \geq [(m-1)\vee 1] S_r} \ee^{c_0 m \log r} q_{r,A}^s
= \frac{q_{r,A,}^{S_r} + \ee^{c_0 \log r}q_{r,A}^{S_r} + \sum_{m \geq 2} \ee^{mc_0 \log r  } q_{r,A}^{(m-1)S_r}}{1-q_{r,A}}
\leq \frac{4 \ee^{-c_0 \log r}}{1-q_{r,A}} < 1
\end{equation}
for $r$ large enough. Inserting this back into~\eqref{e:propmassclass6}, we obtain
\begin{equation}
\label{e:intermediatemassclass}
\log \E_x \left[ \texte^{\int_0^t \xi(X_s) \textd s} \1_{\{\pi(X_{0,t}) \in \mathcal{N}\}} \right]
\leq \sup_{\pi \in \mathcal{N}} \Big\{ t \gamma_\pi 
+ k^{r,\varepsilon}_\pi \log[(\log r)^{\delta_r}/a_{L_r,A,\varepsilon}q_{r,A}]\Big\}.
\end{equation}
Thus the proof will be finished once we show that, for some $\varepsilon' > 0$ and whp, respectively, a.s.\ eventually as $r \to \infty$, 
\begin{equation}
\label{e:propmassclass9}
k^{r,\varepsilon}_\pi \ge \dist_{G}(x,z_{\pi})(1-2(\log L_r)^{-\varepsilon'}) \qquad \forall\,\pi \in \NN.
\end{equation}

We can copy the argument at the end of \cite[Section 3.4]{dHKdS2020}. For each $\pi \in \NN$ define an auxiliary path $\pi_\star$ as follows. First note that by using our assumptions we can find points $z', z'' \in \supp(\pi)$ (not necessarily distinct) such that 
\begin{equation}
\label{e:propmassclass10}
\dist_{G}(x,z') \geq (\log L_r)^\kappa, \qquad \dist_{G}(z'', z_\pi) \leq 2 M_A S_r,
\end{equation}
where the latter holds by \eqref{sizeresults}. Write  $\{z_1, z_2 \} = \{z', z''\}$ with $z_1$, $z_2$ ordered according to their hitting times by $\pi$, i.e., $\inf\{ \ell \colon \pi_\ell = z_1 \} \leq \inf\{\ell \colon \pi_\ell = z_2\}$. Define $\pi_e$ as the concatenation of the loop erasure of $\pi$ between $x$ and $z_1$ and the loop erasure of $\pi$ between $z_1$ and $z_2$. Since $\pi_e$ is the concatenation of two self-avoiding paths, it visits each point at most twice. Finally, define $\pi_\star \sim \pi_e$ by replacing the excursions of $\pi_e$ from $\Pi_{r,A}$ to $D_{r,A}^\cc$ by direct paths between the corresponding endpoints, i.e., replace each $\hat{\pi}_e^{\ssup i}$ by $|\hat{\pi}_e^{\ssup i}|=\ell_i$, $(\hat{\pi}_e^{\ssup i})_0 = x_i \in \Pi_{r,A}$, and $(\hat{\pi}_e^{\ssup i})_{\ell_i} = y_i \in D_{r,A}^\cc$ by a shortest-distance path $\widetilde{\pi}_\star^{\ssup i}$ with the same endpoints and $|\widetilde{\pi}_\star^{\ssup i}| = \dist_{G}(x_i, y_i)$. Since $\pi_\star$ visits each $x \in \Pi_{r,A}$ at most $2$ times,
\begin{equation}
\label{e:propmassclass11}
\begin{aligned}
k^{r,\varepsilon}_\pi \ge k^{r,\varepsilon}_{\pi_\star} \geq M^{r,\varepsilon}_{\pi_\star} 
- 2 |\supp(\pi_\star)\cap \Pi_{r,A}|(S_r+1) \geq M^{r,\varepsilon}_{\pi_\star} - 4 |\supp(\pi_\star)\cap \Pi_{r,A}|  S_r.
\end{aligned}
\end{equation}
Note that $M_{\pi_\star}^{r, \varepsilon} \geq \left|\{x \in \supp(\pi_\star) \colon\, \xi(x) \leq (1-\varepsilon) a_{L_r}\} \right| - 1$ and, by \eqref{e:propmassclass10}, $|\supp(\pi_\star)| \geq \dist_{G}(x,z') \geq (\log L_r)^\kappa \gg (\log L_r)^{\alpha+2\varepsilon'}$ for some $0<\varepsilon'<\varepsilon$. Applying Lemmas~\ref{l:bound_mediumpoints}--\ref{l:boundhighexceedances} and using \eqref{e:def_Sr} and $L_r > r$, we obtain, for $r$ large enough,
\begin{equation}
\label{e:propmassclass12}
\begin{aligned}
k^{r,\varepsilon}_\pi 
& \geq |\supp(\pi_\star)|\left( 1 - \frac{2}{(\log L_r)^{\varepsilon}} 
-  \frac{4 S_r}{(\log L_r)^{\alpha+2\varepsilon'}}\right) 
\geq |\supp(\pi_\star)|\left( 1 - \frac{1}{(\log L_r)^{\varepsilon'}}\right).
\end{aligned}
\end{equation}
On the other hand, since $|\supp(\pi_\star)| \geq (\log L_r)^\kappa$, by \eqref{e:propmassclass10} we have
\begin{equation}
\label{e:propmassclass13}
\begin{aligned}
\left|\supp(\pi_\star) \right| &= \big(\left|\supp(\pi_\star) \right| +  2 M_A S_r\big) - 2 M_A S_r\\
&= \big(\left|\supp(\pi_\star) \right| +  2 M_A S_r\big) \left( 1- \frac{2 M_A S_r}{\left|\supp(\pi_\star) \right| +  2 M_A S_r}\right)\\
& \geq \left( \dist_{G}(x,z'') + 2 M_A S_r \right) \left( 1-\frac{2 M_A S_r}{(\log L_r)^\kappa} \right) \\
& \geq \dist_{G}(x,z_\pi)\left( 1-\frac{1}{(\log L_r)^{\varepsilon'}} \right),
\end{aligned}
\end{equation}
where the first inequality uses that the distance between two points on $\pi_\star$ is less than the total length of $\pi_\star$. Now \eqref{e:propmassclass9} follows from \eqref{e:propmassclass12}--\eqref{e:propmassclass13}.
\end{proof}

%%%%%%%% SECTION 5 %%%%%%%%%%%%%%%%%%%%%%%%%%%%%%%

\section{Proof of the main theorem}
\label{s.proofmain}

Define
\begin{equation}
\label{e:ustar}
U^*(t) := \ee^{t[\varrho \log(\vartheta \mathfrak{r}_t) -\varrho - \widetilde{\chi}(\varrho)]},
\end{equation}
where we recall \eqref{mathfrakrdef}. To prove Theorem~\ref{t:QLyapGWT} we show that
\begin{equation}
\label{UU*comp}
\frac{1}{t} \log U(t) - \frac{1}{t} \log U^*(t) = o(1), \quad t \to \infty, 
\qquad (\Prob \times \Probgr)\text{-a.s.}
\end{equation}
The proof proceeds via upper and lower bound, proved in Sections~\ref{sec:ub} and \ref{sec:lb}, respectively.
Throughout this section, Assumptions~{\rm \ref{ass:pot}, \ref{ass:deg}(1)} and {\rm \ref{ass:degextra}} are in force.

%%%

\subsection{Upper bound}
\label{sec:ub}
We follow \cite[Section 4.2]{dHKdS2020}. The proof of the upper bound in \eqref{UU*comp} relies on two lemmas showing that paths staying inside a ball of radius $\lceil t^\gamma \rceil$ for some $\gamma \in (0,1)$ or leaving a ball of radius $t \log t$ have a negligible contribution to \eqref{e:mass}, the total mass of the solution.

\begin{lemma}{\bf [No long paths]}
\label{l:longpaths}
For any $\ell_t \geq t \log t$, 
\begin{equation}
\label{e:FT2}
\lim_{t \to \infty} \frac{1}{U^*(t)}\,\E_{\cO} \left[\ee^{\int_0^t \xi(X_s) \dd s} \1_{\{\tau_{[B_{\ell_t}]^\cc}< t\}}\right] = 0 
\quad (\Prob \times \mathfrak{P})-a.s.
\end{equation}
\end{lemma}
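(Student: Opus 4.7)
The plan is to exploit that, under the condition $\ell_t \geq t\log t$, exiting $B_{\ell_t}(\cO)$ within time $t$ is a super-exponentially rare event that overwhelms the potential contribution. The approach will mirror \cite[Section~4]{dHKdS2020}, with the uniform degree bound from Lemma~\ref{lem:deginball}(b) taking the place of the constant $d_{\max}$ used there.

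First I would decompose according to the total number of jumps $N_t$ made by $X$ in $[0,t]$. Since each jump moves $X$ by one edge, $\{\tau_{[B_{\ell_t}]^\cc}<t\}\subseteq \{N_t \geq \ell_t\}$, and on $\{N_t=k\}$ the entire trajectory is contained in $B_k(\cO)$. Combined with Lemma~\ref{l:maxpotential} this gives $\int_0^t \xi(X_s)\,\dd s \leq t(a_{L_k}+1)$ a.s.\ for $k$ large enough, hence
\begin{equation*}
\E_\cO\Bigl[\ee^{\int_0^t \xi(X_s)\,\dd s}\, \1_{\{\tau_{[B_{\ell_t}]^\cc}<t\}}\Bigr]
\leq \sum_{k\geq \ell_t} \ee^{t(a_{L_k}+1)}\, \P_\cO(N_t \geq k).
\end{equation*}

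Next I would estimate $\P_\cO(N_t \geq k)$. The event $\{N_t\geq k\}$ equals $\{T_k\leq t\}$, and the jump positions $X_{T_0},\dots,X_{T_{k-1}}$ all lie in $B_{k-1}(\cO)$, so each inter-jump time stochastically dominates an $\Exp(d(k))$ variable with $d(k) := \max_{B_k(\cO)} D_x$. By Lemma~\ref{lem:deginball}(b), $\mathfrak{P}$-a.s.\ for $k$ large enough, $d(k) \leq (\log k)^{\delta_k} = (\log k)^{o(1)}$, so the standard Chernoff bound for Poisson tails yields, for $k \geq t\log t$,
\begin{equation*}
\P_\cO(N_t \geq k) \leq \P\bigl(\Poisson(t(\log k)^{\delta_k}) \geq k\bigr)
\leq \exp\Bigl\{-k\log\tfrac{k}{\ee\, t\,(\log k)^{\delta_k}}\Bigr\}
\leq \ee^{-k\log\log t\,(1-o(1))},
\end{equation*}
using $k/t \geq \log t$ and $(\log k)^{\delta_k} = (\log t)^{o(1)}$. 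Substituting this and using $a_{L_k}\leq \varrho\log(\vartheta k)(1+o(1))$, the $k$-th summand is at most $\exp\{\varrho t\log k - k\log\log t(1-o(1))\}$, whose negative part dominates uniformly in $k\geq \ell_t$ because $k\log\log t/(t\log k) \gtrsim \log\log t$. The sum is then controlled by its first term, giving an overall bound of order $\ee^{-\ell_t\log\log t/2}$ for $t$ large. Since $\log U^*(t) = \varrho t\log t + O(t\log\log\log t)$ and $\ell_t \geq t\log t$, the ratio $\E_\cO[\cdots]/U^*(t)$ is at most $\ee^{-t\log t\log\log t/3}$, which vanishes.

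The main obstacle I anticipate is the quantitative balance between the Poisson tail and the double-exponential potential factor. In \cite{dHKdS2020} the bounded-degree hypothesis makes this effortless, since $\Poisson(O(t))$ has $k$-th tail of order $(\ee t/k)^k$ for $k \gtrsim t\log t$. In the present unbounded setting the substitute bound $(\log k)^{\delta_k}$ from Lemma~\ref{lem:deginball}(b) is no longer a constant, and one must verify that $\delta_k$ decays fast enough for $\log(k/(t(\log k)^{\delta_k}))$ to retain the $\log\log t$ scale. This is precisely where Assumption~\ref{ass:degextra} enters the argument: it ensures that $\mathfrak{P}$-a.s.\ no vertex in $B_{\ell_t}(\cO)$ has degree exceeding $(\log \ell_t)^{\delta_{\ell_t}}$, which keeps the walker's jump rate tame enough for the Chernoff estimate to deliver the needed $\log\log t$ rate.
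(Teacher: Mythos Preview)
Your approach is correct and essentially the same as the paper's: both control the potential via Lemma~\ref{l:maxpotential} on the ball reached, replace the bounded-degree jump rate of \cite{dHKdS2020} by the $(\log r)^{\delta_r}$ bound from Lemma~\ref{lem:deginball}(b), and then use a Poisson/Chernoff tail estimate to kill the sum. The only cosmetic difference is that the paper decomposes according to $r=\sup_{s\le t}|X_s|$ rather than the jump count $N_t$, and in your display the summand should carry $\P_\cO(N_t=k)$ rather than $\P_\cO(N_t\ge k)$ (though the latter is an upper bound, so the argument survives).
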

\begin{proof}
We follow \cite[Lemma 4.2]{dHKdS2020}. For $r \geq \ell_t$, let
\begin{equation}
\BB_r := \left\{ \max_{x \in B_r(\cO)} \xi(x) \geq a_{L_r} + 2 \varrho\right\}.
\end{equation}
Since $\lim_{t\to\infty} \ell_t = \infty$, Lemma~\ref{l:maxpotential} gives that $\Prob$-a.s.
\begin{equation}
\label{e:prFT2}
\bigcup_{r \geq \ell_t} \BB_r
\text{ does not occur eventually as } t\to \infty.
\end{equation}
Therefore we can work on the event $\bigcap_{r \geq \ell_t} [\BB_r]^\cc$. On this event, we write
\begin{align}
\label{e:prFT3}
\E_{\cO} \left[\ee^{\int_0^t \xi(X_s) \dd s} \1_{\{\tau_{[B_{\ell_t}]^\cc}< t\}} \right]
& = \sum_{r \geq \ell_t} \E_{\cO} \left[\ee^{\int_0^t \xi(X_s) \dd s} 
\1_{\{\sup_{s \in [0,t]}|X_s| = r \}} \right] \nonumber\\
& \leq  \ee^{2\varrho t} \sum_{r \geq \ell_t}\, \ee^{\varrho t \log r + \log(\delta_r\log\log r)} \, 
\P_{\cO} \left( J_t \geq r \right),
\end{align}
where $J_t$ is the number of jumps of $X$ up to time $t$, and we use that $|B_r(\cO)| \leq (\log r)^{\delta_r r}$. Next, $J_t$ is stochastically dominated by a Poisson random variable with parameter $t (\log r)^{\delta_r}$. Hence
\begin{equation}
\P_{\cO} \left( J_t \geq r \right) \leq \frac{[\ee t\, (\log r)^{\delta_r}]^r}{r^r} \leq 
\exp \left\{-r \log\left( \frac{r}{\ee t\, (\log r)^{\delta_r}}\right) \right\}
\end{equation}
for large $r$. Using that $\ell_t \geq  t \log t$, we can easily check that, for $r \geq \ell_t$ and $t$ large enough,
\begin{equation}
\varrho t \log r - r \log\left( \frac{r}{\ee t\, (\log r)^{\delta_r}}\right) < -3 r, \qquad r \geq \ell_t. 
\end{equation}
Thus \eqref{e:prFT3} is at most
\begin{equation}
\ee^{2\varrho t} \sum_{r \geq \ell_t}\, \ee^{-3r + \log(\delta_r\log\log r)} \, \leq \ee^{2\varrho t} \sum_{r \geq \ell_t}\, \ee^{-2r} 
\leq 2\,\ee^{2\varrho t}\,\ee^{-2\ell_t} \leq \ee^{-\ell_t}. 
\end{equation}
Since $\lim_{t\to\infty} \ell_t = \infty$ and $\lim_{t\to\infty} U^*(t) = \infty$, this settles the claim.
\end{proof}

\begin{lemma}{\bf [No short paths]}
\label{l:noshortpaths}
For any $\gamma \in (0,1)$,
\begin{equation}
\label{e:NSP2}
\lim_{t \to \infty} \frac{1}{U^*(t)}\,\E_{\cO} \left[\ee^{\int_0^t \xi(X_s) \dd s} 
\1_{\{\tau_{[B_{\lceil t^\gamma \rceil}]^\cc} > t\}} \right] = 0 
\quad (\Prob \times \mathfrak{P})-a.s.
\end{equation}
\end{lemma}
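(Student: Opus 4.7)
The plan is to bound the restricted Feynman--Kac expectation via spectral theory applied to the Hamiltonian $\Delta_{B_r} + \xi$ on the ball $B_r(\cO)$ with $r = \lceil t^\gamma\rceil$, and to show that the resulting bound is much smaller than $U^*(t)$ because $\gamma<1$.

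First I would set $r := \lceil t^\gamma\rceil$ and observe that on the event $\{\tau_{[B_r]^\cc} > t\}$ the walk stays in $B_r(\cO)$, so the Feynman--Kac formula gives
\begin{equation}
\E_{\cO} \left[\ee^{\int_0^t \xi(X_s)\,\dd s} \1_{\{\tau_{[B_r]^\cc} > t\}} \right]
\;=\; \sum_{y\in B_r(\cO)} u^{(r)}(y,t),
\end{equation}
where $u^{(r)}$ solves the PAM on $B_r(\cO)$ with Dirichlet boundary condition on $[B_r(\cO)]^\cc$. Expanding $u^{(r)}(y,t)$ in the orthonormal basis of eigenfunctions $(\phi_k)$ of $\Delta_{B_r} + \xi$ in $\ell^2(B_r(\cO))$ and using Cauchy--Schwarz in the spectral sum, one obtains $u^{(r)}(y,t) \leq \ee^{t\,\lambda_{B_r(\cO)}(\xi)}$, hence
\begin{equation}
\label{eq:planbound}
\E_{\cO} \left[\ee^{\int_0^t \xi(X_s)\,\dd s} \1_{\{\tau_{[B_r]^\cc} > t\}} \right]
\;\leq\; |B_r(\cO)|\,\ee^{t\,\lambda_{B_r(\cO)}(\xi)}.
\end{equation}

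Next I would bound each of the two factors on the right-hand side of \eqref{eq:planbound}. For the principal eigenvalue, since $-\Delta_{B_r}$ is positive semidefinite as a quadratic form on $\ell^2(B_r(\cO))$, one has
\begin{equation}
\lambda_{B_r(\cO)}(\xi) \;\leq\; \max_{x \in B_r(\cO)} \xi(x),
\end{equation}
and by Lemma~\ref{l:maxpotential} this maximum is bounded $(\Prob\times\mathfrak{P})$-a.s.\ for $t$ large by $a_{L_r} + o(1) = \varrho\log\log L_r + o(1)$. For the volume factor, Lemma~\ref{lem:vol} gives $|B_r(\cO)| \leq \ee^{\vartheta r(1+o(1))} = \ee^{o(t)}$ since $r = t^\gamma$ with $\gamma<1$, and similarly $\log L_r = \vartheta r(1+o(1))$, so $a_{L_r} \leq \gamma\varrho \log t + O(1)$.

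Combining these, the right-hand side of \eqref{eq:planbound} is at most $\exp\{\gamma\varrho t \log t + o(t \log t)\}$. Since $U^*(t) = \exp\{\varrho t \log t + o(t\log t)\}$ from the definition \eqref{e:ustar} together with $\log(\vartheta\mathfrak{r}_t) = \log t + O(\log\log\log t)$, the ratio is bounded by $\exp\{-(1-\gamma)\varrho t \log t + o(t\log t)\}$, which tends to $0$ because $\gamma<1$. This yields \eqref{e:NSP2}.

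The main obstacle I anticipate is the rigorous justification of the spectral bound \eqref{eq:planbound} on the random graph $\GW$; everything else (volume growth and maximum potential) is already packaged in Lemmas~\ref{lem:vol} and \ref{l:maxpotential}. The argument is robust in that any upper bound $\lambda_{B_r(\cO)}(\xi) \leq a_{L_r} + (\log r)^{o(1)}$, which also follows from bounding the off-diagonal part of $\Delta_{B_r}$ in terms of the maximal degree controlled by Lemma~\ref{lem:deginball}(b), is enough to make the comparison with $U^*(t)$ work, so no delicate estimate on $\widetilde\chi(\varrho)$ is needed here.
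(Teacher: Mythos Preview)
Your proof is correct and follows essentially the same approach as the paper: control the restricted Feynman--Kac expectation by the maximum of $\xi$ on $B_r(\cO)$ via Lemma~\ref{l:maxpotential}, obtain $\max_{B_r}\xi \le \gamma\varrho\log t + O(1)$, and compare with $U^*(t)$. The only difference is that the paper skips the spectral detour entirely, using the trivial pathwise bound $\ee^{\int_0^t \xi(X_s)\,\dd s}\le \ee^{t\max_{B_r}\xi}$ on the event $\{\tau_{[B_r]^\cc}>t\}$; your route via \eqref{eq:planbound} picks up an extra volume factor $|B_r(\cO)|=\ee^{o(t)}$ that is harmless but unnecessary.
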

\begin{proof}
We follow \cite[Lemma 4.3]{dHKdS2020}. By Lemma~\ref{l:maxpotential} with $r = \lceil t^\gamma \rceil$, we may assume that
\begin{equation}
\max_{x \in B_{\lceil t^\gamma \rceil}} \xi(x) \leq \varrho \log \log L_{\lceil t^\gamma \rceil}
+ \frac{2 \varrho \log\lceil t^\gamma \rceil}{\vartheta \lceil t^\gamma \rceil}
\leq \gamma \varrho \log t + O(1), \quad t \to \infty,
\end{equation}
where the second inequality uses that $\log L_{\lceil t^\gamma \rceil} \sim \log |B_{\lceil t^\gamma \rceil}(\cO)| \sim \vartheta \lceil t^\gamma \rceil$. Hence
\begin{equation}
\frac{1}{U^*(t)} \,\E_{\cO} \left[\ee^{\int_0^t \xi(X_s) \dd s} \1_{\{\tau_{[B_{\lceil t^\gamma \rceil}]^\cc} > t\}}\right]
\leq \frac{1}{U^*(t)}\,\ee^{\gamma\varrho t \log t+O(1)} \leq \ee^{ (1-\gamma)\varrho t \log t + C\log\log\log t}, \quad t \to \infty,
\end{equation}
for any constant $C>1$.
\end{proof}

The proof of the upper bound in \eqref{UU*comp} also relies on a third lemma estimating the contribution of paths leaving a ball of radius $\lceil t^\gamma \rceil$ for some $\gamma \in (0,1)$ but staying inside a ball of radius $t \log t$. We slice to annulus between these two balls into layers, and derive an estimate for paths that reach a given layer but do not reach the next layer.  To that end, fix $\gamma \in (\alpha,1)$ with $\alpha$ as in \eqref{e:def_Sr}, and let
\begin{equation}
\label{e:defrkt}
K_t := \lceil t^{1-\gamma} \log t \rceil, \qquad r^{(k)}_t := k \lceil t^\gamma \rceil, \quad
1 \leq k \leq K_t, \qquad \ell_t := K_t \lceil t^\gamma \rceil \geq t \log t.
\end{equation}
For $1 \leq k \leq K_t$, define (recall \eqref{defcurlyP})
\begin{equation}
\cN^{\ssup k}_t := \left\{ \pi \in \scrP(\cO, V) \colon\, \supp(\pi) \subset B_{r^{\ssup {k+1}}_t}(\cO),\, 
\supp(\pi)\cap B^\cc_{r^{\ssup k}_t}(\cO) \neq \emptyset \right\}
\end{equation}
and set
\begin{equation}
U^{\ssup k}(t) := \E_\cO \left[ \ee^{\int_0^t \xi(X_s) \dd s} \1_{\{\pi_{[0,t]}(X) \in \cN^{\ssup k}_t \}}\right].
\end{equation}

\begin{lemma}{\bf [Upper bound on $U^{\ssup k}(t)$]}
\label{l:UBpieces}
For any $\varepsilon>0$, $(\Prob \times \mathfrak{P})$-a.s.\ eventually as $t \to \infty$,
\begin{equation}
\label{e:UBpieces}
\sup_{1 \leq k \leq K_t} \frac 1t \log U^{\ssup k}_t \leq \frac{1}{t}\log U^*(t) + \varepsilon.
\end{equation}
\end{lemma}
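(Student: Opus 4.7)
The plan is to apply Proposition~\ref{p:massclass} separately for each $1\le k\le K_t$, to the set $\cN^{\ssup k}_t$ with reference radius $r:=r^{\ssup{k+1}}_t$. First I would check the hypotheses. The inclusion $\supp(\pi)\subset B_r(\cO)$ is built into the definition of $\cN^{\ssup k}_t$. Any $\pi\in\cN^{\ssup k}_t$ must exit $B_{r^{\ssup k}_t}(\cO)$, so $\max_{1\le\ell\le|\pi|}\dist_G(\pi_\ell,\cO)\ge r^{\ssup k}_t\ge\lceil t^\gamma\rceil$, and since $\log L_r\sim\vartheta r^{\ssup{k+1}}_t=O(t\log t)$ we have $(\log L_r)^\kappa=O((t\log t)^\kappa)\ll t^\gamma$ as soon as $\kappa\in(\alpha,\gamma)$, confirming the length condition of Proposition~\ref{p:massclass} for $t$ large. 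Next, for each $\pi$ I would set $\gamma_\pi:=a_{L_r}-\widetilde\chi(\varrho)+\varepsilon/3$ and let $z_\pi\in\supp(\pi)$ be a vertex realizing $\max_\ell\dist_G(\pi_\ell,\cO)$, so that $\dist_G(\cO,z_\pi)\ge r^{\ssup k}_t$. Corollary~\ref{c:eigislandsGW}, applied with threshold $\varepsilon/4$, guarantees $\gamma_\pi\ge\lambda_{r,A}(\pi)+\texte^{-S_r}$ eventually $(\Prob\times\mathfrak{P})$-a.s., and choosing $A$ large makes $\gamma_\pi\ge a_{L_r}-A$ automatic. Applying Proposition~\ref{p:massclass} then yields
\begin{equation*}
\log U^{\ssup k}(t)\le t\bigl[a_{L_r}-\widetilde\chi(\varrho)+\tfrac{\varepsilon}{3}\bigr]+r^{\ssup k}_t\,\log\!\left[\frac{(\log r)^{\delta_r}}{a_{L_r,A,\varepsilon}\,q_{r,A}}\right],\qquad r=r^{\ssup{k+1}}_t.
\end{equation*}

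The remainder is an asymptotic check carried out uniformly in $k$. Using $a_{L_r}=\varrho\log\log L_r=\varrho\log r+\varrho\log\vartheta+o(1)$ together with $a_{L_r,A,\varepsilon}\sim\varepsilon\varrho\log r$, $q_{r,A}\to 1$, and $\delta_r\to 0$, one finds $\log[(\log r)^{\delta_r}/(a_{L_r,A,\varepsilon}q_{r,A})]\le-(1-o(1))\log\log r$. Since $\log r^{\ssup{k+1}}_t=\log r^{\ssup k}_t+O(1)$ and $\log\log r^{\ssup{k+1}}_t=(1+o(1))\log\log t$ uniformly for $k\in[1,K_t]$, the right-hand side above is bounded by
\begin{equation*}
\varrho t\log\vartheta-t\widetilde\chi(\varrho)+\tfrac{\varepsilon}{2}t+g(r^{\ssup k}_t),\qquad g(y):=\varrho t\log y-y\log\log t.
\end{equation*}
Elementary calculus shows that $g$ is maximized on $[t^\gamma,t\log t]$ at $y^*=\varrho t/\log\log t=\mathfrak{r}_t$, with $g(\mathfrak{r}_t)=\varrho t\log\mathfrak{r}_t-\varrho t$. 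Substituting gives the identity $\varrho t\log\vartheta+g(\mathfrak{r}_t)-t\widetilde\chi(\varrho)=\log U^*(t)$, hence $\tfrac{1}{t}\log U^{\ssup k}(t)\le\tfrac{1}{t}\log U^*(t)+\varepsilon$ uniformly in $k$.

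The main obstacle is this sharpness computation: the entropy penalty $-r^{\ssup k}_t\log\log r^{\ssup{k+1}}_t$ and the eigenvalue gain $\varrho t\log\log L_{r^{\ssup{k+1}}_t}$ both carry leading behaviour that must cancel against the putative limit $\varrho\log(\vartheta\mathfrak{r}_t)$ exactly, producing the correct constant $-\varrho$ at the critical scale $r^{\ssup k}_t\sim\mathfrak{r}_t$. Lower-order contributions involving $\delta_r$, the slack $r^{\ssup{k+1}}_t-r^{\ssup k}_t$, and the subleading terms in $\log\log r$ must all be absorbed into the $\varepsilon t$ budget, which fixes the choice of $A$ and $t_0$ at the outset.
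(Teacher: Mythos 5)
Your overall strategy is the same as the paper's: apply Proposition~\ref{p:massclass} to $\cN^{\ssup k}_t$ with $r=r^{\ssup{k+1}}_t$, verify \eqref{e:cond_massclass1}--\eqref{e:cond_massclass2} via Corollary~\ref{c:eigislandsGW}, exploit that the factor $\log[(\log r)^{\delta_r}/a_{L_r,A,\varepsilon}q_{r,A}]$ is negative together with $\dist_G(\cO,z_\pi)\geq r^{\ssup k}_t$, and then optimize over the radius to produce the constant $-\varrho$ at the scale $\mathfrak{r}_t$. The hypothesis checks (choice $\kappa\in(\alpha,\gamma)$, the constant $\gamma_\pi=a_{L_r}-\widetilde\chi(\varrho)+\varepsilon/3$ instead of the paper's $\pi$-dependent $\lambda_{r,A}(\pi)+\ee^{-S_r}$) are fine.

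The gap is in the uniform-in-$k$ reduction to $g(r^{\ssup k}_t)$ with $g(y)=\varrho t\log y-y\log\log t$. That step asserts the pointwise inequality
\begin{equation*}
\varrho t\log(\vartheta r^{\ssup{k+1}}_t)-r^{\ssup k}_t\bigl(1-o(1)\bigr)\log\log r^{\ssup{k+1}}_t
\;\leq\;\varrho t\log\vartheta+g(r^{\ssup k}_t)+\tfrac{\varepsilon}{6}t+o(t),
\end{equation*}
and this is false for extreme $k$. For $k=1$, shifting the eigenvalue term from $r^{\ssup 2}_t$ to $r^{\ssup 1}_t$ costs $\varrho t\log\tfrac{k+1}{k}=\varrho t\log 2$, which cannot be absorbed into $\varepsilon t/6$ when $\varepsilon$ is small (the "$O(1)$" in $\log r^{\ssup{k+1}}_t=\log r^{\ssup k}_t+O(1)$ is multiplied by $\varrho t$, so it is $O(t)$ with a constant not controlled by $\varepsilon$). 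Similarly, for $k$ of order $K_t$ one has $r^{\ssup k}_t\asymp t\log t$, so replacing $(1-o(1))\log\log r^{\ssup{k+1}}_t$ by $\log\log t$ produces an error $r^{\ssup k}_t\cdot\bigl[O(1)+\delta_r\log\log t\bigr]$, which is of order $t\log t$, not $o(t)$. Your final bound survives only because at exactly those $k$ the value $g(r^{\ssup k}_t)$ sits far below $g(\mathfrak{r}_t)$ — slack your argument never invokes, since you first assert the pointwise bound and only then take the maximum of $g$. This is why the paper does the bookkeeping differently: it keeps both the eigenvalue and the entropy term at the \emph{same} radius $r^{\ssup{k+1}}_t$, using $|z_\pi|>r^{\ssup{k+1}}_t-\lceil t^\gamma\rceil$ and checking that the $\lceil t^\gamma\rceil$-slack contributes $\frac{\lceil t^\gamma\rceil}{t}O(\log\log t)=o(1)$ after dividing by $t$, and then maximizes the exact function $F_t(r)=\varrho\log(\vartheta r)-\frac rt\bigl[\log(\varepsilon\varrho\log(\vartheta r))-\delta_r\log\log r\bigr]$ over all $r$; the $\delta_r$- and constant-corrections are then only evaluated near the maximizer $r_t=\mathfrak{r}_t[1+o(1)]$, where they are genuinely $o(1)$, rather than uniformly over all radii up to $t\log t$. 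Your argument becomes correct if you restructure it this way (or if you explicitly quantify the slack $g(\mathfrak{r}_t)-g(r^{\ssup k}_t)$ against the stated errors for small and large $k$), but as written the absorption into the $\varepsilon t$ budget does not hold uniformly in $k$.
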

\begin{proof}
We follow \cite[Lemma 4.4]{dHKdS2020} Fix $k \in \{1, \ldots, K_t\}$. For $\pi \in \cN^{\ssup k}_t$, let
\begin{equation}
\gamma_\pi := \lambda_{r^{\ssup {k+1}}_t, A}(\pi) + \ee^{-S_{\lceil t^\gamma \rceil}}, 
\qquad z_\pi \in \supp(\pi), |z_\pi| > r^{\ssup k}_t,
\end{equation}
chosen such that \eqref{e:cond_massclass1}--\eqref{e:cond_massclass2} are satisfied. By Proposition~\ref{p:massclass} and \eqref{Aqdef},  $(\Prob \times \mathfrak{P})$-a.s.\ eventually as $t \to \infty$,
\begin{equation}
\label{e:prUBpieces2}
\begin{aligned}
\frac 1t \log U^{\ssup k}_t 
\leq \gamma_\pi - \frac{|z_\pi|}{t} \left( \log[ \varepsilon\varrho\log(\vartheta r^{(k+1)}_t)] 
- \delta_r\log[ \log (r^{(k+1)}_t)]  + o(1)  \right).
\end{aligned}
\end{equation}
Using Corollary~\ref{c:eigislandsGW} and $\log L_r \sim \vartheta r$, we bound
\begin{equation}
\begin{aligned}
\gamma_\pi 
\leq \varrho \log (\vartheta r^{(k+1)}_t) - \widetilde{\chi}(\varrho) + \tfrac12 \varepsilon + o(1).
\end{aligned}
\end{equation}
Moreover, $|z_\pi| > r^{\ssup {k+1}}_t - \lceil t^\gamma \rceil$ and
\begin{equation}
\begin{aligned}
&\frac{\lceil t^\gamma \rceil}{t} \left( \log[ \varepsilon\varrho\log( \vartheta r^{(k+1)}_t)] 
- \delta_r \log [\log(r^{(k+1)}_t)] \right) \\ 
&\qquad \leq 
\frac{1}{t^{1-\gamma}} \log \log (2 t \log t) = o(1).
\end{aligned}
\end{equation}
Hence
\begin{equation}
\label{e:UBfinal}
\gamma_\pi \leq F_t(r^{(k+1)}_t) - \widetilde{\chi}(\varrho) + \tfrac12 \varepsilon + o(1)
\end{equation}
with
\begin{equation}
F_t(r) := \varrho \log (\vartheta r) - \frac{r}{t} \big[ \log(\varepsilon\varrho\log (\vartheta r)) - \delta_r \log(\log r) \big], \qquad r>0.
\end{equation}
The function $F_t$ is maximized at any point $r_t$ satisfying
\begin{equation}
\label{keyasymp1}
\varrho t = r_t \left[ \log(\varepsilon\varrho\log(\vartheta r_t)) - (\delta_r + r\tfrac{\dd}{\dd r}\delta_r) \log\log r 
+ \frac{1}{\log(\vartheta r_t)} - \frac{\delta_r}{\log r_t}  \right].
\end{equation}
In particular, $r_t = \mathfrak{r}_t[1+o(1)]$, which implies that
\begin{equation}
\label{e:prUBpieces1}
\sup_{r > 0} F_t(r) \leq \varrho \log (\vartheta \mathfrak{r}_t) - \varrho + o(1), \qquad t \to \infty.
\end{equation}
Inserting \eqref{e:prUBpieces1} into \eqref{e:UBfinal}, we obtain $\displaystyle \frac 1t \log U^{\ssup k}_t  < \varrho \log (\vartheta \mathfrak{r}_t) - \varrho - \widetilde{\chi}(\varrho) + \varepsilon$, which is the desired upper bound because $\varepsilon>0$ is arbitrary.
\end{proof}

\begin{proof}[Proof of the upper bound in \eqref{UU*comp}]
To avoid repetition, all statements hold $(\Probgr \times \Prob)$-a.s.\ eventually as $t \to \infty$. Set
\begin{equation}
U^{\ssup 0}(t) := \E_\cO \left[\ee^{\int_0^t \xi(X_s) \dd s} \1_{\{\tau_{[B_{\lceil t^\gamma \rceil}]^\cc}>t\}}\right],
\quad
U^{\ssup \infty}(t) := \E_\cO \left[ \ee^{\int_0^t \xi(X_s) \dd s} \1_{\{\tau_{[B_{\lceil t \log t \rceil}]^\cc} \leq t\}}\right].
\end{equation}
Then
\begin{equation}
U(t) \leq U^{\ssup 0}(t) + U^{\ssup \infty}(t) + K_t \max_{1 \leq k \leq K_t} U^{\ssup k}(t).
\end{equation}
From Lemmas~\ref{l:longpaths}--\ref{l:UBpieces} and the fact that $K_t = o(t)$, we get
\begin{equation}
\limsup_{t\to\infty} \left\{\frac{1}{t} \log U(t) - \frac{1}{t} \log U^*(t)\right\} \leq \varepsilon.
\end{equation}
Since $\varepsilon>0$ is arbitrary, this completes the proof of the upper bound in \eqref{e:QLyapGWT}.
\end{proof}

%%%

\subsection{Lower bound}
\label{sec:lb}
We follow \cite[Section 4.1]{dHKdS2020}. Fix $\varepsilon>0$. By the definition of $\widetilde{\chi}$, there exists an infinite rooted tree $T=(V',E',\YY)$ with degrees in $\supp(D_g)$ such that $\chi_T(\varrho) < \widetilde{\chi}(\varrho) + \tfrac14 \varepsilon$. Let $Q_r = B^T_r(\YY)$ be the ball of radius $r$ around $\YY$ in $T$. By Proposition~\ref{p:dualrepchi} and \eqref{e:defhatchi}, there exist a radius $R \in \N$ and a potential profile $q\colon B^T_R \to\R$ with $\cL_{Q_R}(q;\varrho)<1$ (in particular, $q\leq 0$) such that
\begin{equation}
\label{e:prLB0}
\lambda_{Q_R}(q;T) \geq -\widehat{\chi}_{Q_R}(\varrho;T) - \tfrac12 \varepsilon 
> -\widetilde{\chi}(\varrho) - \varepsilon.
\end{equation}
For $\ell\in\N$, let $B_\ell = B_\ell(\cO)$ denote the ball of radius $\ell$ around $\cO$ in $\GW$. We will show next that, $(\mathfrak{P} \times \Prob)$-a.s.\ eventually as $\ell \to \infty$, $B_\ell$ contains a copy of the ball $Q_R$ where the potentail $\xi$ is bounded from below by $\varrho\log\log |B_\ell| + q$.

\begin{proposition}{\bf [Balls with high exceedances]}
\label{p:existencesubtree}
$(\Probgr \times \Prob)$-almost surely eventually as $\ell \to \infty$, there exists a vertex $z \in B_\ell$ with $B_{R+1}(z) \subset B_\ell$ and an isomorphism $\varphi:B_{R+1}(z) \to Q_{R+1}$ such that $\xi \geq \varrho \log \log |B_\ell| + q \circ \varphi$ in $B_R(z)$. In particular,
\begin{equation}
\lambda_{B_R(z)}(\xi; \GW) > \varrho \log \log |B_\ell| - \widetilde{\chi}(\varrho) - \varepsilon.
\end{equation}
Any such $z$ necessarily satisfies $|z| \geq c \ell$ $(\Probgr \times \Prob)$-a.s.\ eventually as $\ell \to \infty$ 
for some constant $c = c(\varrho, \vartheta, \widetilde{\chi}(\varrho), \varepsilon) >0$.
\end{proposition}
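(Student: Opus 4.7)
The plan is a two-sided Borel--Cantelli argument, one direction producing a good $z$ eventually and the other ruling out good $z$ near $\cO$. Write $L_\ell := |B_\ell|$ and $\eta := 1 - \cL_{Q_R}(q;\varrho) > 0$, strict by the choice of $q$. The key probabilistic input is the following: for any fixed $z$ deep in $\GW$, the $\Probgr$-probability of the event $E_z^{\GW}$ that there exists an isomorphism $\varphi: B_{R+1}(z) \to Q_{R+1}$ is bounded below by a positive constant $p^* = p^*(R,T)$, since the relevant offspring counts in $\GW$ are i.i.d.\ copies of $D$ and the finitely many offspring counts used in $Q_{R+1}$ lie in $\supp(D)$. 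Given $E_z^{\GW}$ and a fixed $\varphi$, by Assumption~\ref{ass:pot} and independence of the potential,
\begin{equation}
\label{e:planpot}
\Prob\bigl(\xi(x) \geq \varrho\log\log L_\ell + q(\varphi(x))\ \forall\, x\in B_R(z)\bigr)
= \prod_{x \in B_R(z)} L_\ell^{-\exp(q(\varphi(x))/\varrho)}
= L_\ell^{-\cL_{Q_R}(q;\varrho)},
\end{equation}
straight from the definition of $\cL$.

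For the \emph{existence}, I would take candidates at generation $k:=\ell - R - 2$, so that $B_{R+1}(z) \subset B_\ell$ for each $z \in \cZ_k$. By \eqref{e:volumerateGW}, $|\cZ_k| \sim W \mu^k$ $\mathfrak{P}$-a.s. Extract a maximal subset $\widetilde{\cZ}\subset\cZ_k$ whose full $(R+1)$-balls are pairwise disjoint in $\GW$. Since the upward portion of each $B_{R+1}(z)$ is a tree of depth $\leq R+1$ with degrees bounded by $(\log\ell)^{\delta_\ell}$ (Lemma~\ref{lem:deginball}(b)), a greedy pruning retains $|\widetilde{\cZ}| \geq \mu^k / (\log\ell)^{O(\delta_\ell R)}$ vertices $\mathfrak{P}$-a.s. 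Across $z \in \widetilde{\cZ}$ the degree and potential configurations are then independent, so the number of good $z$ stochastically dominates a $\mathrm{Bin}(|\widetilde{\cZ}|,\, p^*L_\ell^{-1+\eta})$ with mean $\geq e^{\vartheta\eta\ell(1-o(1))}$. Hence the probability of no good $z$ is at most $\exp(-e^{\vartheta\eta\ell/2})$, summable in $\ell$, and Borel--Cantelli provides such a $z$ eventually.

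For the \emph{localization} $|z| \geq c\ell$, pick any $c \in (0,\vartheta\cL_{Q_R}(q;\varrho))$, which depends on $\varrho,\vartheta,\widetilde{\chi}(\varrho),\varepsilon$ via the fixed data $T,R,q$. A union bound over $z \in B_{c\ell/\vartheta}$ (of cardinality $\leq e^{c\ell(1+o(1))}$ by Lemma~\ref{lem:vol}), over the at most $|Q_{R+1}|! \cdot (\log\ell)^{O(\delta_\ell R)}$ possible isomorphisms $\varphi$ at a given $z$, and over the potential event \eqref{e:planpot}, yields an expected count bounded by $\exp\bigl([c - \vartheta\cL_{Q_R}(q;\varrho) + o(1)]\ell\bigr)$, which is summable. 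A second Borel--Cantelli argument excludes all such $z$ eventually. Finally, the eigenvalue lower bound follows by monotonicity of the principal eigenvalue: on $B_R(z)$ the potential $\xi$ dominates $\varrho\log\log L_\ell + q\circ\varphi$ pointwise, and via the isomorphism the Dirichlet Laplacian on $B_R(z)\subset B_{R+1}(z)$ is unitarily equivalent to the one on $Q_R\subset Q_{R+1}$, so \eqref{e:prLB0} gives $\lambda_{B_R(z)}(\xi;\GW) \geq \varrho\log\log L_\ell - \widetilde{\chi}(\varrho) - \varepsilon$.

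The main obstacle is the uniform control over the candidate set $\widetilde{\cZ}$ when degrees are unbounded: pruning to obtain disjoint $(R+1)$-balls could in principle destroy too many candidates, and large degrees could inflate the number of possible isomorphisms $\varphi$. This is exactly where Assumption~\ref{ass:degextra} is indispensable via Lemma~\ref{lem:deginball}(b), which caps all degrees in $B_{2\ell}(\cO)$ by $(\log\ell)^{\delta_\ell}$ with $\delta_\ell \to 0$. Both combinatorial penalties are then of order $L_\ell^{o(1)}$ and do not disturb the exponential rates; the remaining bookkeeping is parallel to \cite[Proposition 4.1]{dHKdS2020}.
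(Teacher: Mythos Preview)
Your two Borel--Cantelli steps and the monotonicity argument for the eigenvalue bound are correct and match the structure of the proof in \cite[Proposition~4.1]{dHKdS2020}, which the paper simply cites. The substantive disagreement is your final paragraph: you assert that Assumption~\ref{ass:degextra} is \emph{indispensable} here via Lemma~\ref{lem:deginball}(b), whereas the paper says explicitly that the proof ``carries over verbatim because the degrees play no role.'' The paper is right, and your claim should be dropped.

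For the \emph{existence} step, your pruning of $\cZ_k$ to obtain disjoint $(R{+}1)$-balls is where you invoke the degree bound, but this is avoidable. Take as ``seeds'' the vertices $y\in\cZ_{k-R-1}$; for distinct seeds the subtrees below them are disjoint, and for $z$ at depth $R{+}1$ below $y$ the entire ball $B_{R+1}(z)$ (and hence $B_R(z)$) lies inside the subtree rooted at $y$. Thus the events ``some $z$ below $y$ is good'' are independent across seeds, with a common success probability $\ge p_{\mathrm{tree}}\cdot L_\ell^{-(1-\eta)}$ where $p_{\mathrm{tree}}>0$ depends only on $R$ and $T$. Since $Z_{k-R-1}\sim W\mu^{k-R-1}$ is a constant multiple of $L_\ell$ $\Probgr$-a.s., the binomial bound gives a doubly-exponentially small failure probability without touching any degree estimate. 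Your pruning route also works, but even Lemma~\ref{lem:deginball}(a) (hence only Assumption~\ref{ass:deg}(2)) would suffice, since the pruning loss $(\delta\ell)^{O(R)}$ is merely polynomial.

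For the \emph{localization} step, your extra factor $(\log\ell)^{O(\delta_\ell R)}$ on the number of isomorphisms is unnecessary: if any isomorphism $\varphi:B_{R+1}(z)\to Q_{R+1}$ exists then $|B_{R+1}(z)|=|Q_{R+1}|$, a fixed constant, so the number of isomorphisms is at most $|Q_{R+1}|!$, independent of the ambient degrees. The union bound then reads $|B_{c\ell}|\cdot|Q_{R+1}|!\cdot L_\ell^{-(1-\eta)}$, which is summable for $c<\vartheta\,\cL_{Q_R}(q;\varrho)$ with no degree input at all. So in this proposition the unbounded-degree issue is a red herring; the places where Assumption~\ref{ass:degextra} is genuinely needed are elsewhere (Lemmas~\ref{l:mass_out2}, \ref{l:mass_in}, \ref{l:fixed_class} and Proposition~\ref{p:massclass}).
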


\begin{proof}
See \cite[Proposition 4.1]{dHKdS2020}. The proof carries over verbatim because the degrees play no role.
\end{proof}

\begin{proof}[Proof of the lower bound in \eqref{e:QLyapGWT}]
Let $z$ be as in Proposition~\ref{p:existencesubtree}. Write $\tau_z$ for the hitting time of $z$ by the random walk $X$. For $s\in (0,t)$, we estimate
\begin{equation}
\label{lowbound1}
\begin{aligned}
U(t) &\geq \E_\cO\Big[\ee^{\int_0^t \xi(X_u)\,\dd u}\,\1_{{\{\tau_z\leq s\}}}\,
\1_{{\{X_u\in B_R(z)\,\forall u\in[\tau_z,t]\}}}\Big]\\
&=\E_\cO\Big[\ee^{\int_0^{\tau_z} \xi(X_u)\,\dd u}\,\1_{{\{\tau_z\leq s\}}}\,
\E_z\Big[\ee^{\int_0^{v} \xi(X_u)\,\dd u}\,\1_{{\{X_u\in B_R(z)\,\forall u\in [0,v]\}}}\Big]\Big|_{v=t-\tau_z}\Big],
\end{aligned}
\end{equation}
where we use the strong Markov property at time $\tau_z$. We first bound the last term in the integrand in \eqref{lowbound1}. Since $\xi \geq \varrho \log\log |B_\ell| +q $ in $B_R(z)$, 
\begin{equation}
\begin{aligned}
\E_z\Big[\ee^{\int_0^{v} \xi(X_u)\,\dd u} \1_{\{X_u\in B_R(z)\,\forall u\in [0,v]\}}\Big]
& \geq \ee^{v \varrho \log \log |B_\ell|} \E_{\YY}\Big[\ee^{\int_0^{v} q(X_u)\,\dd u} 
\1_{\{X_u\in Q_R\,\forall u\in [0,v]\}}\Big] \\
& \geq \e^{ v \varrho \log \log |B_\ell|} \ee^{v \lambda_{Q_R}(q;T)} \phi^{\ssup 1}_{Q_R}(\YY)^2 \\
& > \exp \big\{ v \left(\varrho \log\log |B_{\ell}| -  \widetilde{\chi}(\varrho) - \varepsilon \right) \big\}
\end{aligned}
\end{equation}
for large $v$, where we used that $B_{R+1}(z)$ is isomorphic to $Q_{R+1}$ for the indicators in the first inequality, and applied Lemma~\ref{l:bounds_mass} and \eqref{e:prLB0} to obtain the second and third inequalities, respectively. On the other hand, since $\xi\geq0$,  
\begin{equation}
\E_\cO \Big[\ee^{\int_0^{\tau_z} \xi(X_u)\,\dd u}\1{\{\tau_z\leq s\}}\Big]
\geq \P_\cO(\tau_z\leq s),
\end{equation}
and we can bound the latter probability from below by the probability that the random walk runs along a shortest path from the root $\cO$ to $z$ within a time at most $s$. Such a path $(y_i)_{i=0}^{|z|}$ has $y_0 = \cO$, $y_{|z|} = z$, $y_i \sim y_{i-1}$ for $i=1, \ldots, |z|$, has at each step from $y_i$ precisely $\deg(y_i)$ choices for the next step with equal probability, and the step is carried out after an exponential time $E_i$ with parameter $\deg(y_i)$. This gives
\begin{equation}
\begin{aligned}
\Prob_\cO(\tau_z\leq s) 
& \geq \Big(\prod_{i=1}^{|z|}\frac 1{\deg(y_i)}\Big) P\Big(\sum_{i=1}^{|z|} E_i \leq s\Big) 
\geq ((\log |z|)^{\delta_\ell})^{-|z|}
{\rm Poi}_{d_{\rm min} s}([|z|,\infty)),
\end{aligned}
\end{equation}
where ${\rm Poi}_\gamma$ is the Poisson distribution with parameter $\gamma$, and $P$ is the generic symbol for probability. Summarising, we obtain
\begin{equation}
\label{lowbound2}
\begin{aligned}
U(t) 
& \geq ((\log|z|)^{\delta_l})^{-|z|} \e^{-d_{\rm min} s}\frac{(d_{\rm min} s)^{|z|}}{|z|!}
\e^{(t-s)\left[\varrho\log\log |B_{\ell}| - \widetilde{\chi}(\varrho) - \varepsilon \right]} \\
& \geq \exp \left\{-d_{\min} s + (t-s)\left[\varrho\log\log |B_{\ell}| - \widetilde{\chi}(\varrho) 
- \varepsilon \right] - |z| \log \left( \frac{(\log |z|)^{\delta_\ell}}{d_{\min}}\frac{|z|}{s}\right) \right\} \\
& \geq \exp \left\{-d_{\min} s + (t-s)\left[\varrho\log\log |B_{\ell}| - \widetilde{\chi}(\varrho) 
- \varepsilon \right] - \ell \log \left( \frac{(\log \ell)^{\delta_\ell}}{d_{\min}}\frac{\ell}{s}\right) \right\},
\end{aligned}
\end{equation}
where in the last inequality we use that $s \leq |z|$ and $\ell \geq |z|$. Further assuming that $\ell = o(t)$, we see that the optimum over $s$ is obtained at 
\begin{equation}
s= \frac{\ell}{d_{\min}+\varrho\log\log|B_{\ell}|-\widetilde{\chi}(\varrho) - \varepsilon} =o(t).
\end{equation} 
Note that, by Proposition~\ref{p:existencesubtree}, this $s$ indeed satisfies $s\leq |z|$. Applying \eqref{e:volumerateGW} we get, after a straightforward computation, $(\Probgr \times \Prob)$-a.s.\ eventually as $t \to \infty$,
\begin{equation}
\label{Ulowboundwithr}
\frac 1t\log U(t) \geq \varrho\log \log |B_\ell| - \frac{\ell}{t} \log\log \ell -  \frac{\ell}t \delta_\ell \log\log \ell 
- \widetilde{\chi}(\varrho) - \varepsilon + O\left( \frac{\ell}{t} \right).
\end{equation}
Inserting $\log |B_\ell| \sim \vartheta \ell$, we get
\begin{equation}
\frac 1t\log U(t) \geq F_\ell - \widetilde{\chi}(\varrho) - \varepsilon + o(1) + O\left( \frac{\ell}{t} \right) 
\end{equation}
with
\begin{equation}
F_\ell = \varrho\log(\vartheta \ell) - \frac{\ell}{t} \log\log \ell -  \frac{\ell}t \delta_\ell \log\log \ell. 
\end{equation}
The optimal $\ell$ for $F_\ell$ satisfies
\begin{equation}
\label{keyasymp2}
\varrho t = \ell \big[1+ (\delta_\ell + \ell\tfrac{\dd}{\dd \ell}\delta_\ell)] \log\log\ell 
+ \frac{\ell\delta_\ell}{\log \ell}  + \frac{\ell}{\log \ell}, 
\end{equation}
i.e., $\ell = \mathfrak{r}_t[1+o(1)]$. For this choice we obtain
\begin{equation}
\label{UlowboundGWT1}
\frac 1t\log U(t)\geq \varrho\log(\vartheta\mathfrak{r}_t) - \varrho
-\widetilde{\chi}(\varrho) - \varepsilon + o(1).
\end{equation}
Hence $(\Probgr \times \Prob)$-a.s.
\begin{equation}
\label{UlowboundGWT2}
\liminf_{t \to \infty}
\left\{ \frac{1}{t}\log U(t) - \frac{1}{t} \log U^*(t)\right\} \geq - \varepsilon.
\end{equation}
Since $\varepsilon>0$ is arbitrary, this completes the proof of the lower bound in \eqref{e:QLyapGWT}.
\end{proof}

%%%

\paragraph{REMARK:}
It is clear from \eqref{keyasymp1} and \eqref{keyasymp2} that, in order to get the correct asymptotics, it is crucial that both $\delta_r$ and $r\frac{\dd}{\dd r}\delta_r$ tend to zero as $r\to\infty$. This is why Assumption~\ref{ass:degextra} is the weakest condition on the tail of the degree distribution under which the arguments in \cite{dHKdS2020} can be pushed through.

%%%%%%%%%%%%%%%%%%%%%% APPENDICES %%%%%%%%%%%%%%%%%%%%%%

\appendix

%%%%%%%%%%%%%%%%%% APPENDIX A %%%%%%%%%%%%%%%%%%%%%%%%%%

\section{Dual variational formula}
\label{appA}

We introduce alternative representations for $\chi$ in \eqref{e:defchiG} in terms of a \lq dual\rq\ variational formula. Fix $\varrho \in (0,\infty)$ and a graph $G=(V,E)$. The functional
\begin{equation}
\label{e:cL}
\cL(q;G) := \sum_{x \in V} \ee^{q(x)/\varrho}\in[0,\infty], \qquad q\colon\, V \to [-\infty,\infty), 
\end{equation}
plays the role of a large deviation rate function for the potential $\xi$ in $V$ (compare with \eqref{e:DE}). For $\Lambda \subset V$, define
\begin{equation}
\label{e:defhatchi}
\widehat{\chi}_{\Lambda}(G) := - \sup_{\substack{q\colon V \to [-\infty,\infty), \\ \cL(q;G) \leq 1}} 
\lambda_{\Lambda}(q;G)\in[0,\infty).
\end{equation}
The condition $\cL(q;G) \leq 1$ under the supremum ensures that the potentials $q$ have a fair probability under the i.i.d.\ double-exponential distribution. Write $\widehat{\chi}(G) = \widehat{\chi}_V(G)$.

\begin{proposition}{\bf [Alternative representations for $\chi$]}
\label{p:dualrepchi}
For any graph $G = (V,E)$ and any $\Lambda \subset V$,
\begin{equation}
\widehat{\chi}_\Lambda(\varrho;G) \geq \widehat{\chi}_V(\varrho; G) = \widehat{\chi}_G(\varrho) = \chi_G(\varrho).
\end{equation}
\end{proposition}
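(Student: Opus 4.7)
\textbf{Proof plan for Proposition~\ref{p:dualrepchi}.}

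The claim bundles three assertions: (i) the monotonicity $\widehat{\chi}_\Lambda \geq \widehat{\chi}_V$, (ii) the trivial identification $\widehat{\chi}_V(\varrho;G) = \widehat{\chi}_G(\varrho)$ (pure notation), and (iii) the duality $\widehat{\chi}_G(\varrho) = \chi_G(\varrho)$. I would tackle them in that order.

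For (i), the plan is to use the Rayleigh--Ritz variational characterization of the principal eigenvalue,
\begin{equation*}
\lambda_\Lambda(q;G) = \sup\Big\{ \langle (\Delta_G + q)\psi,\psi\rangle : \psi \in \ell^2(V),\, \|\psi\|_2 = 1,\, \supp(\psi)\subset \Lambda\Big\}.
\end{equation*}
Since enlarging $\Lambda$ enlarges the class of admissible test functions, $\Lambda \mapsto \lambda_\Lambda(q;G)$ is monotone non-decreasing. Taking the supremum over $q$ with $\cL(q;G)\leq 1$ preserves this monotonicity, and negating flips the inequality to give $\widehat{\chi}_\Lambda \geq \widehat{\chi}_V$. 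Part (ii) is just the convention $\widehat{\chi}_G := \widehat{\chi}_V$.

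The heart of the proof is (iii), which is a standard Legendre-duality computation. First I compute, for any probability measure $p$ on $V$,
\begin{equation*}
\sup_{q\colon\cL(q;G)\leq 1} \sum_{x\in V} p(x)\,q(x) = -\varrho J_V(p).
\end{equation*}
This follows by substituting $u(x) = \ee^{q(x)/\varrho}$, so the constraint becomes $u\geq 0$, $\sum u\leq 1$, and maximizing $\varrho \sum p(x)\log u(x)$. A Lagrange multiplier (or direct convexity/Jensen) argument shows the optimum is attained at $u=p$, yielding $\varrho\sum p(x)\log p(x) = -\varrho J_V(p)$; permitting $q(x)=-\infty$ (hence $u(x)=0$) handles the case $p(x)=0$. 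Writing $p=\psi^2$ with $\psi\geq 0$, $\|\psi\|_2=1$, a standard identity gives $-\langle \Delta_G \psi,\psi\rangle = I_E(\psi^2)=I_E(p)$. Then, applying Rayleigh--Ritz again and exchanging the two suprema,
\begin{equation*}
\sup_{q\colon \cL(q;G)\leq 1} \lambda_V(q;G)
= \sup_{\psi}\Big[\langle \Delta_G\psi,\psi\rangle + \sup_{q\colon\cL(q;G)\leq 1}\langle q,\psi^2\rangle\Big]
= \sup_{p\in\cP(V)} \big[-I_E(p) -\varrho J_V(p)\big] = -\chi_G(\varrho).
\end{equation*}
Negating gives $\widehat{\chi}_G(\varrho) = \chi_G(\varrho)$.

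The main technical obstacle is the exchange of suprema and the attainment of the inner optimum when $V$ is infinite: if $p$ has infinite support, the optimal $q^*(x) = \varrho\log p(x)$ may fail to be achieved in the open constraint set, but can be approximated by truncations $q_n = q^*\vee(-n)$ (renormalized so that $\cL(q_n;G)\leq 1$), whose Rayleigh quotients converge to the desired value by monotone/dominated convergence; similarly the supremum over $\psi$ can be restricted to finitely supported $\psi$ by a standard cutoff argument since such $\psi$ are dense. Once these approximations are in place, the equality drops out. A brief remark should note that if one prefers an off-the-shelf reference, the same duality is worked out in detail in \cite{dHKdS2020} for the bounded-degree setting, and the present argument is identical since no property of the degree distribution is used.
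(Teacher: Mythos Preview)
Your proof is correct and follows the standard Legendre-duality route; the paper itself does not give an independent argument but simply refers to \cite[Section~A.1]{dHKdS2020}, where exactly this computation is carried out, so your proposal is in essence a spelled-out version of the cited proof (as you yourself note at the end).
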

\begin{proof}
See \cite[Section A.1]{dHKdS2020}
\end{proof}

%%%%%%%%%%%% APPENDIX B %%%%%%%%%%%%%%%%%%%%%%%%%%%

\section{Largest eigenvalue}
\label{appB}
 
We recall the Rayleigh-Ritz formula for the principal eigenvalue of the Anderson Hamiltonian. For $\Lambda \subset V$ and $q\colon\,V \to [-\infty, \infty)$, let $\lambda_\Lambda(q; G)$ denote the largest eigenvalue of the operator $\Delta_G + q$ in $\Lambda$ with Dirichlet boundary conditions on $V\setminus\Lambda$, i.e.,
\begin{equation}
\label{e:RRformula}
\begin{aligned}
\lambda_\Lambda(q;G) := \sup \big\{ \langle (\Delta_G + q) \phi, \phi \rangle_{\ell^2(V)} 
\colon\, \phi \in \R^{V}, \,\supp \phi \subset \Lambda, \, \|\phi\|_{\ell^2(V)}=1 \big\}.
\end{aligned}
\end{equation}

\begin{lemma}{\bf [Spectral bounds]}
\label{lem:specbd}
\begin{enumerate}
\item[{\rm (1)}] 
For any $\Gamma \subset \Lambda \subset V$, 
\begin{equation}
\label{e:monot_princev}
\max_{z \in \Gamma} q(z) - D_{\bar z} \leq \lambda_\Gamma(q;G) 
\leq \lambda_\Lambda(q;G) \leq \max_{z \in \Lambda} q(z)
\end{equation}
with $\bar z = \mathrm{arg}\max_{z \in \Gamma} q(z)$ and $D_{\bar z}$ the degree of $\bar z$.
\item[{\rm (2)}] 
The eigenfunction corresponding to $\lambda_\Lambda(q;G)$ can be taken to be non-negative.
\item[{\rm (3)}] 
If $q$ is real-valued and $\Gamma \subsetneq \Lambda$ is finite and connected in $G$, then the second inequality in \eqref{e:monot_princev} is strict and the eigenfunction corresponding to $\lambda_\Lambda(q;G)$ is strictly positive.
\end{enumerate}
\end{lemma}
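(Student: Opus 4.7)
The plan is to derive all three items directly from the Rayleigh-Ritz formula \eqref{e:RRformula}, using the standard Dirichlet-form identity for $\Delta_G$ and invoking Perron-Frobenius for the strict statements in (3).

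For (1), all three inequalities come from appropriate choices of test function in \eqref{e:RRformula}. The upper bound $\lambda_\Lambda \le \max_\Lambda q$ uses the identity $\langle \Delta_G \phi,\phi\rangle_{\ell^2(V)} = -\sum_{\{x,y\}\in E}(\phi(x)-\phi(y))^2 \le 0$, which yields $\langle (\Delta_G+q)\phi,\phi\rangle \le \sum_{x\in\Lambda} q(x)\phi(x)^2 \le \max_\Lambda q$ for any unit-norm $\phi$ supported in $\Lambda$. Monotonicity $\lambda_\Gamma \le \lambda_\Lambda$ is immediate: any $\phi$ admissible in the sup defining $\lambda_\Gamma$ (supported in $\Gamma\subset\Lambda$) is admissible in the sup defining $\lambda_\Lambda$. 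The lower bound comes from the single-site test function $\phi = \delta_{\bar z}$ with $\bar z = \arg\max_\Gamma q$: a direct computation gives $(\Delta_G\delta_{\bar z})(\bar z) = -D_{\bar z}$, whence $\langle (\Delta_G+q)\delta_{\bar z},\delta_{\bar z}\rangle = q(\bar z) - D_{\bar z}$.

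For (2), I would observe that the operator $\Delta_G + q$ restricted to $\Lambda$, viewed as a matrix on $\ell^2(\Lambda)$, has non-negative off-diagonal entries (equal to $1$ on edges in $E$, and $0$ otherwise). Consequently, for any real-valued $\phi$,
\begin{equation*}
\langle (\Delta_G+q)|\phi|,|\phi|\rangle - \langle (\Delta_G+q)\phi,\phi\rangle
= 2\!\!\sum_{\substack{\{x,y\}\in E \\ x,y\in\Lambda}}\!\bigl[|\phi(x)||\phi(y)| - \phi(x)\phi(y)\bigr] \;\ge\; 0,
\end{equation*}
so passing from $\phi$ to $|\phi|$ never decreases the Rayleigh quotient; hence any principal eigenfunction can be taken non-negative.

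For (3), I would read the hypothesis as $\Lambda$ being finite and connected (so that Perron-Frobenius applies to the principal eigenfunction of $\Lambda$), with $\Gamma \subsetneq \Lambda$. Shifting by a sufficiently large constant $C$ turns $(\Delta_G+q)|_\Lambda + CI$ into a non-negative matrix, and connectedness of $\Lambda$ makes it irreducible. Perron-Frobenius then guarantees that the principal eigenvector $\phi_\Lambda$ is unique up to scaling and strictly positive on $\Lambda$. For strict monotonicity, I would argue by contradiction: suppose $\lambda_\Gamma = \lambda_\Lambda$ and let $\phi_\Gamma \ge 0$ be the principal eigenfunction on $\Gamma$ given by (2), extended by zero outside $\Gamma$. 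Then $\phi_\Gamma$ is admissible in the sup defining $\lambda_\Lambda$ and achieves it, so it is a principal eigenfunction on $\Lambda$; by uniqueness it is a scalar multiple of $\phi_\Lambda$, hence strictly positive on $\Lambda$, contradicting $\phi_\Gamma \equiv 0$ on $\Lambda\setminus\Gamma \neq \emptyset$. The main obstacle I anticipate is this last step, where one must confirm that the Rayleigh-Ritz maximizer is indeed an eigenvector of the top eigenvalue (routine for finite connected $\Lambda$), and handle any ambiguity in the hypothesis should $\Lambda$ be allowed to be infinite (in which case a finite exhaustion of $\Lambda$ together with a Krein-Rutman-type argument would be needed).
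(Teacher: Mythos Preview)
Your argument for (1) is correct and matches the paper's: the same Dirichlet-form identity for the upper bound, domain-monotonicity for the middle inequality, and the delta test function at $\bar z$ for the lower bound (the paper additionally invokes monotonicity of $\lambda$ in $q$ before inserting $\delta_{\bar z}$, but this is superfluous, as your direct computation shows). For (2) and (3) the paper simply writes ``The claims in (2) and (3) are standard'' and gives no argument, so your explicit $|\phi|$-vs-$\phi$ comparison and Perron--Frobenius contradiction go beyond what the paper provides; both are correct, and your reading that the finiteness/connectedness hypothesis must apply to $\Lambda$ (not just $\Gamma$) is the right one for the stated conclusion to hold.
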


\begin{proof}
Write
\begin{equation}
\label{varitionalRR}
\begin{aligned}
\langle (\Delta_G + q) \phi, \phi \rangle_{\ell^2(V)} 
&= \sum_{x \in \Lambda} \left[(\Delta_G\phi) (x) + q(x)\phi(x)\right]\phi(x)\\
&=\sum_{x \in \Lambda} \sum_{ {y\in \Lambda:} \atop { \{x,y\} \in E_\Lambda} } 
[\phi(y) - \phi(x)]\phi(x) + \sum_{x \in \Lambda} q(x)\phi(x)^2\\
&= -\tfrac12 \sum_{ {x, y\in \Lambda:} \atop { \{x,y\} \in E_\Lambda} }[\phi(x)-\phi(y)]^2 
+ \sum_{x \in \Lambda} q(x)\phi(x)^2,
\end{aligned}
\end{equation}
where the first sum in the last line runs over all ordered pairs $(x,y)$ with $(x,y) \neq (y,x)$, which gives rise to the factor $\tfrac12$. The upper bound in \eqref{e:monot_princev} follows from the estimate
\begin{equation}
\langle (\Delta_G + q) \phi, \phi \rangle \leq \sum_{x \in \Lambda} q(x)\phi(x)^2 
\leq \max_{z \in \Lambda} q(z) \sum_{x \in \Lambda} \phi(x)^2 = \max_{z \in \Lambda} q(z).
\end{equation}
To get the lower bound in \eqref{e:monot_princev}, we use the fact that $\lambda_\Lambda$ is non-decreasing in $q$. Hence,
replacing $q(z)$ by $-\infty$ for every $z \neq \bar z$ and taking as test function $\phi=\bar\phi = \delta_{\bar z}$, we get from \eqref{varitionalRR} that 
\begin{equation}
\begin{aligned}
\lambda_\Lambda(q;G) 
& \geq - \tfrac12 \sum_{ {x, y\in \Lambda:} \atop { \{x,y\} \in E_\Lambda} }\left[\bar \phi(x)-\bar \phi(y)\right]^2
+ \sum_{x \in \Lambda} q(x)\bar \phi(x)^2\\
&= - \tfrac12 \sum_{ {y \in \Lambda:} \atop { \{\bar z,y\} \in E_\Lambda} } 1 + q(\bar z)
= -D_{\bar z} + \max_{z \in \Lambda} q(z),   
\end{aligned}
\end{equation}
which settles the claim in (1). The claims in (2) and (3) are standard.
\end{proof}

Inside $\GW$, fix a finite connected subset $\Lambda \subset V$, and let $H_\Lambda$ denote the Anderson Hamiltonian in $\Lambda$ with zero Dirichlet boundary conditions on $\Lambda^c = V \backslash \Lambda$ (i.e., the restriction of the operator $H_G = \Delta_G + \xi$ to the class of functions supported on $\Lambda$). For $y \in \Lambda$, let $u^y_\Lambda$ be the solution of
\begin{equation}
\label{e:PAMalt}
\begin{array}{llll}
\partial_t u(x,t) &=& (H_{\Lambda} u)(x,t), &x \in \Lambda,\,t>0,\\
u(x,0) &=& \delta_y(x), &x \in \Lambda,
\end{array}
\end{equation}
and set $U^y_\Lambda(t) := \sum_{x \in \Lambda} u^y_\Lambda(x,t)$. The solution admits the Feynman-Kac representation
\begin{equation}
\label{e:FKformula}
u^y_\Lambda(x,t) = \E_y \left[ \exp \left\{\int_0^t \xi(X_s) \textd s \right\} 
\1 \{\tau_{\Lambda^{\cc}}>t, X_t = x\} \right],
\end{equation}
where $\tau_{\Lambda^\cc}$ is the hitting time of $\Lambda^\cc$. It also admits the spectral representation
\begin{equation}
\label{e:specrepr}
u^y_\Lambda(x,t) = \sum_{k=1}^{|\Lambda|} \texte^{t \lambda^{\ssup k}_\Lambda} 
\phi_\Lambda^{\ssup k}(y) \phi^{\ssup k}_\Lambda(x),
\end{equation}
where $\lambda^{\ssup 1}_\Lambda \ge \lambda^{\ssup 2}_\Lambda \ge \cdots \ge \lambda^{\ssup{|\Lambda|}}_\Lambda$ and $\phi^{\ssup 1}_\Lambda, \phi^{\ssup 2}_\Lambda, \ldots, \phi^{\ssup{|\Lambda|}}_\Lambda$ are, respectively, the eigenvalues and the corresponding orthonormal eigenfunctions of $H_\Lambda$. These two representations may be exploited to obtain bounds for one in terms of the other, as shown by the following lemma.

\begin{lemma}{\bf [Bounds on the solution]}
\label{l:bounds_mass}
For any $y \in \Lambda$ and any $t > 0$,
\begin{multline}
\label{e:bounds_mass}
\qquad
\texte^{t \lambda^{\ssup 1}_\Lambda} \phi^{\ssup 1}_\Lambda(y)^2 
\le \E_y \left[ \texte^{\int_0^t \xi(X_s) \textd s} \1_{\{\tau_{\Lambda^\cc} > t, X_t = y\}} \right] \\
\le \E_y \left[ \texte^{\int_0^t \xi(X_s) \textd s} \1_{\{\tau_{\Lambda^\cc} > t\}} \right]
\le \texte^{t \lambda^{\ssup 1}_\Lambda} |\Lambda|^{1/2}.
\qquad
\end{multline}
\end{lemma}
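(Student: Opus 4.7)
The plan is to bound the Feynman--Kac expectation sandwich in \eqref{e:bounds_mass} by using the two representations \eqref{e:FKformula} and \eqref{e:specrepr} of $u^y_\Lambda$ jointly: the Feynman--Kac side identifies the expectations on the left, while the spectral side controls them through $\lambda^{\ssup 1}_\Lambda$ and the orthonormality of the eigenfunctions.

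For the lower bound, I would evaluate \eqref{e:specrepr} at $x=y$, obtaining
\begin{equation}
\E_y \left[ \ee^{\int_0^t \xi(X_s)\,\dd s}\,\1_{\{\tau_{\Lambda^\cc}>t,\, X_t = y\}}\right]
= u^y_\Lambda(y,t) = \sum_{k=1}^{|\Lambda|} \ee^{t\lambda^{\ssup k}_\Lambda}\phi^{\ssup k}_\Lambda(y)^2.
\end{equation}
Each summand is non-negative because the exponentials are positive and the eigenfunctions are real, so keeping only the $k=1$ term gives $\ee^{t\lambda^{\ssup 1}_\Lambda}\phi^{\ssup 1}_\Lambda(y)^2$, which is the leftmost inequality. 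The middle inequality is immediate: it just drops the indicator $\1_{\{X_t=y\}}$.

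For the upper bound, I would first rewrite
\begin{equation}
\E_y \left[\ee^{\int_0^t \xi(X_s)\,\dd s}\,\1_{\{\tau_{\Lambda^\cc}>t\}}\right]
= \sum_{x \in \Lambda} u^y_\Lambda(x,t)
= \langle u^y_\Lambda(\cdot,t),\mathds{1}_\Lambda\rangle_{\ell^2(\Lambda)}
\end{equation}
and apply Cauchy--Schwarz to get $\le |\Lambda|^{1/2}\|u^y_\Lambda(\cdot,t)\|_{\ell^2(\Lambda)}$. Then, using the orthonormality of the eigenfunctions in \eqref{e:specrepr} and the Parseval identity $\sum_{k=1}^{|\Lambda|}\phi^{\ssup k}_\Lambda(y)^2 = \|\delta_y\|_{\ell^2(\Lambda)}^2 = 1$, I compute
\begin{equation}
\|u^y_\Lambda(\cdot,t)\|_{\ell^2(\Lambda)}^2
= \sum_{k=1}^{|\Lambda|} \ee^{2t\lambda^{\ssup k}_\Lambda}\phi^{\ssup k}_\Lambda(y)^2
\le \ee^{2t\lambda^{\ssup 1}_\Lambda}\sum_{k=1}^{|\Lambda|}\phi^{\ssup k}_\Lambda(y)^2
= \ee^{2t\lambda^{\ssup 1}_\Lambda},
\end{equation}
which yields the rightmost inequality after taking a square root and multiplying by $|\Lambda|^{1/2}$.

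There is no real obstacle: the argument is the standard spectral/Feynman--Kac interplay for a finite-dimensional symmetric operator. The only point requiring mild care is verifying that the operator $H_\Lambda$ is indeed self-adjoint on $\ell^2(\Lambda)$ with Dirichlet boundary condition (so that \eqref{e:specrepr} makes sense with real eigenvalues and a real orthonormal basis), but this is standard once $\Lambda$ is finite and one recalls the symmetry of $\Delta_G$ on $\ell^2$ and the fact that $\xi$ acts as a bounded multiplication operator.
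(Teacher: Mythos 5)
Your proposal is correct and follows essentially the same route as the paper: the Feynman--Kac representation \eqref{e:FKformula} combined with the spectral representation \eqref{e:specrepr}, keeping the $k=1$ term for the lower bound and using Cauchy--Schwarz plus Parseval ($\sum_k \phi^{\ssup k}_\Lambda(y)^2=1$) for the upper bound, with the middle inequality being elementary. No gaps.
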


\begin{proof}
The first and third inequalities follow from \eqref{e:FKformula}--\eqref{e:specrepr} after a suitable application of Parseval's identity. The second inequality is elementary.
\end{proof}

%%%%%%%%%%% REFERENCES %%%%%%%%%%%%%%%%%%%%%%%%%%%

%%%%%%%%%%%%%%%%%%%%%%%%%%%%%%%%%%%%%%%%%%%%%%%%


\begin{thebibliography}{99}

\bibitem{dHKdS2020}
F.\ den Hollander, W.\ Konig, R.S.\ dos Santos,
The Parabolic Anderson model on a Galton-Watson tree,
to appear in \emph{In and Out of Equilibrium 3: Celebrating Vladas Sidoravicius}, Progress in Probability, Birkh\"auser, 2021.

\bibitem{AGH2016}
L.\ Avena, O.\ G{\"u}n, M.\ Hesse, 
The parabolic Anderson model on the hypercube, 
Stoch.\ Proc.\ Appl.\ 130, 3369--3393, 2020.

\bibitem{A2016}
A.\ Astrauskas,
From extreme values of i.i.d.\ random fields to extreme eigenvalues of finite-volume Anderson Hamiltonian,
Probability Surveys 13, 156--244, 2016.

\bibitem{BK2016}
M.\ Biskup, W.\ K\"onig,
Eigenvalue order statistics from random Schr\"odinger operators with doubly-exponential tails,
Commun.\ Math.\ Phys.\ 341, 179--218, 2016.

\bibitem{FM1990}
K.\ Fleischmann, S.A.\ Molchanov,
Exact asymptotics in a mean field model with random potential,
Probab.\ Theory Relat.\ Fields 86, 239--251, 1990. 

\bibitem{GM1990}
J.\ G\"artner, S.A.\ Molchanov,
Parabolic problems for the Anderson model I. Intermittency and related problems,
Commun.\ Math.\ Phys.\ 132,  613--655, 1990.

\bibitem{GM1998}
J.\ G\"artner, S.A.\ Molchanov,
Parabolic problems for the Anderson model II. Second-order asymptotics and structure of high peaks,
Probab.\ Theory Relat.\ Fields 111, 17--55, 1998.

\bibitem{G1999}
G.\ Grimmett,
\emph{Percolation} (2nd.\ ed.), 
Grundlehren der mathematischen Wissenschaften, 
Volume 321, Springer, Berlin, 1999.

\bibitem{K2016}
W.\ K\"onig,
\emph{The Parabolic Anderson Model},
Pathways in Mathematics, Birkh\"auser, 2016.

\bibitem{LP2016}
R.\ Lyons, Y.\ Peres,
\emph{Probability on Trees and Networks},
Cambridge Series in Statistical and Probabilistic Mathematics,
Cambridge University Press, New York, 2016.

\bibitem{BKS2018}
M.\ Biskup, W.\ K\"onig, R.S.\ dos Santos,
Mass concentration and aging in the parabolic Anderson model with doubly-exponential tails,
Probab.\ Theory Relat.\ Fields 171, 251--331, 2018. 

\bibitem{GKM2007}
J.\ G\"artner, W.\ K\"onig and S.\ Molchanov,
Geometric characterization of intermittency in the parabolic Anderson model,
Ann.\ Probab.\ 35, 439--499, 2007.

\bibitem{KA1994}
K.B.\ Athreya, 
Large deviation rates for branching processes.\ I.\ Single type case, 
Ann.\ Appl.\ Probab.\ 4, 779--790, 1994. 

\bibitem{BB1993}
J.\ Biggins, N.\ Bingham, 
Large deviations in the supercritical branching process, 
Adv.\ Appl.\ Prob.\  25, 757--772, 1993

\end{thebibliography}
\end{document}